\DeclareMathOperator*{\argmax}{arg\,max}
\newtheorem{theorem}{Theorem}[section]
\newtheorem{lemma}[theorem]{Lemma}
\theoremstyle{definition}
\newtheorem{definition}[theorem]{Definition}
\newtheorem{proposition}[theorem]{Proposition}
\newtheorem{remark}[theorem]{Remark}
\newtheorem{assumption}[theorem]{Assumption}
\newtheorem*{example}{Example}
\numberwithin{equation}{section}
\newcommand{\IR}{\mathbb R}
\newcommand{\by}{\boldsymbol y}
\newcommand{\bz}{\boldsymbol  z}
\newcommand{\bs}{\boldsymbol s}
\newcommand{\todo}{\color{red}}
\title{Bayesian inversion for { Electrical Impedance Tomography} by sparse interpolation}
\author{Quang Huy Pham \and Viet Ha Hoang}
\address{Division of Mathematical Sciences,\\
	School of Physical and Mathematical Sciences,\\
	Nanyang Technological University\\
         21 Nanyang link,	Singapore 637371}
\begin{document}
	
	\maketitle
	
	\begin{abstract}
%		We study the Bayesian approach to EIT, regarding it as an inverse problem of making inferences about the conductivity of a body given electrode measurements of voltage on its surface. We consider the setting where the conductivity is parametrized by a vector whose components are bounded and of decaying importance. We use multivariate Lagrange interpolation in an anisotropic polynomial space to approximate the forward map and employ the interpolant as a surrogate. We propose an algorithm coupling the surrogate with a dimension-robust MCMC method. We analyze the complexity and use numerical examples to demonstrate the possibility of accelerating MCMC using our proposed algorithm. 

We study the Electrical Impedance Tomography Bayesian inverse problem for recovering the conductivity given noisy measurements of the voltage on some boundary surface electrodes. The uncertain conductivity depends linearly on a countable number of uniformly distributed random parameters in a compact interval, with the coefficient functions in the linear expansion decaying with an algebraic rate. We analyze the surrogate Markov Chain Monte Carlo (MCMC) approach for sampling the posterior probability measure, where the multivariate sparse adaptive interpolation, with interpolating points chosen according to a lower index set,  is used for approximating the forward map. The forward equation is approximated once  before running the MCMC for all the realizations, using interpolation on the finite element (FE) approximation at the parametric interpolating points. When evaluation of the solution is needed for a realization, we only need to compute a polynomial, thus cutting drastically the computation time. We contribute a rigorous error estimate for the MCMC convergence. In particular, we show that there is a nested sequence of interpolating lower index sets  for which we can derive an interpolation error estimate in terms of the cardinality of these sets, uniformly for all the parameter realizations. An explicit convergence rate for the MCMC sampling of the posterior expectation of the conductivity is rigorously derived, in terms of the interpolating point number, the accuracy of the   FE approximation of the forward equation, and the MCMC sample number. However, a constructive algorithm for identifying this nested sequence of lower sets has not been available in the literature. We perform numerical experiments using an adaptive greedy approach to construct the sets of interpolation points. The experiments show that adaptive sparse interpolation MCMC recovers the conductivity with an equal, perhaps even better, quality than that produced by the simple MCMC procedure where the forward equation is repeatedly solved for all the samples, but using an exponentially less computation time. Although we study theoretically the case where the conductivity depends linearly on the random parameters, our numerical results indicate that the method works equally for log-uniform conductivities, whose natural logarithms depend linearly on these parameters.  We also demonstrate that MCMC using an adaptively constructed set of interpolation points produces far better results than those when this set being chosen isotropically treating all the random parameters equally.

		\textbf{Keywords}: Electrical impedance tomography, Bayesian inversion, multivariate polynomial interpolation, surrogate modelling, Markov chain Monte Carlo. 
	\end{abstract}
	\section{Introduction} 

Electrical Impedance Tomography (EIT) is a non-invasive imaging technique where the conductivity of an object is inferred through measurements on electrodes attached to its surface. Applications of  EIT inverse problems are far reaching and important. We mention exemplarily the papers \cite{Adler2012, Brown2003, Cheney1999} and references therein. Mathematically, EIT is  regarded as a highly ill-posed nonlinear inverse problem (\cite{Muller2012}).  Regularization techniques for the EIT problem have been extensively studied (see, e.g., \cite{Gehre2014, Lechleiter2006}). The Bayesian approach to inverse problems (\cite{Kaipio2005, Stuart2010}) has also been employed  for the EIT problem, though to a less extent, see, e.g., \cite{ Dunlop2016, Hyvonen2015, Kaipio2000}. It assumes that the measurement noise follows a known probability, and the desired conductivity belongs to a prior probability space which represents the a priori known information and beliefs. The solution to a Bayesian inverse problem is the posterior probability, which is the conditional probability of the conductivity given the noisy observation. The problem is well posed;  as long as the forward map is measurable, the problem has a unique solution (\cite{Cotter2009}). As the conductivity belongs to a high (infinite) dimensional probability space, sampling the posterior probability via Markov Chain Monte Carlo (MCMC) is normally an enormously   	complex process. A straightforward application of MCMC requires solving the forward problem with equally high accuracy for a large number of different realizations of the forward equation, leading to a very high computation cost. Reducing the cost of sampling the posterior probability measure for Bayesian inverse problems for forward partial differential equations is a very active research at the moment. The aim of this paper is to contribute a method which reduces drastically the MCMC sampling cost for Bayesian inverse problems for the EIT equation. 
	
We focus on a surrogate method where the solution of the forward equation is approximated by a polynomial, obtained from sparse interpolation of the random variable coordinates that determine the prior probability measure. This polynomial approximation is computed once before running the MCMC; and when evaluation of the solution at a particular random sample of the conductivity is needed, instead of solving the forward equation at this sample, we only need to compute the value of a polynomial, which is far less complicated. 

Surrogate methods for MCMC sampling of the posterior probability have been considered before. In the generalized polynomial chaos (gpc) approach as considered in \cite{Marzouk2007}, the prior uncertainty is propagated through the forward model, where an approximating equation in terms of the physical variables and the stochastic variables, using a finite number of gpc modes, is solved. For approximating the forward partial differential equation, finite elements (FEs) can be used to approximate the chosen gpc coefficient functions. This results in solving a linear system that incorporates all the degrees of freedom for approximating  the gpc coefficient functions. A detailed analysis can be found in, e.g., \cite{Cohen2010, Cohen2011}. For the EIT Bayesian inverse problems, this approach has been computationally investigated in \cite{Hakula2014, Hyvonen2015, Leinonen2014}. As shown in \cite{Hoang2013} for the case of elliptic forward equations, when the gpc modes are properly chosen so that a convergence rate for the gpc approximation of the forward equation (in the Lebesgue space $L^2$ with respect to the distribution of the stochastic variables) can be derived, the same convergence order in terms of the number of gpc modes chosen for the forward solver is realized in the total MCMC convergence rate.

Collocation methods are another important tool to approximate forward stochastic/parametric equations. While gpc-based methods are intrusive, i.e. they assume prior knowledge on the distribution of the random parameters in the equations' coefficients, and polynomial basis of the Lebesgue space $L^2$ of the random parameters with respect to the prior probability measure are needed (which may need to be established numerically if the probability density is complicated), collocation method is not intrusive.  It only needs to solve the forward equation at each parametric collocation point separately, which can be done in parallel. Given a collocation polynomial approximation for the forward equation that is uniform for all the parameters (i.e. an approximation in the $L^\infty$ parameter space) such as the sparse interpolation approximation in this paper, the approximation may be used as a surrogate model for MCMC sampling for Bayesian inverse problems with any priors. If a convergence rate in terms of the number of collocation points for the forward solver is available, an MCMC convergence rate can be shown. Further, an apparent advantage of the collocation approximation approach is that when better accuracy is desired by adding additional collocation points, we only need to solve the forward equation at these points, unlike in the gpc FE approach where the whole linear system encoding the interaction of all the chosen gpc modes needs to be solved. A collocation-based surrogate model for MCMC sampling of the posterior for Bayesian inverse problems is considered in \cite{Marzouk2009}. However, to the best of our knowledge, a rigorous error analysis for a collocation surrogate MCMC method for Bayesian inverse problems has not been performed. Perhaps, it is due to the fact that explicit convergence rates for collocation forward solvers have only been established for the sparse interpolation method in Chkifa et al. \cite{Chkifa2013} as indicated by these authors. This approximation of \cite{Chkifa2013} has not been employed as a surrogate model for Bayesian inverse problems. In this paper, we employ the sparse interpolation approximation of \cite{Chkifa2013} as the surrogate model for the EIT Bayesian inverse problems for inferring the conductivity.  We contribute a rigorous error analysis for the MCMC sampling procedure. As for the coefficient of the random elliptic equation considered in \cite{Chkifa2013}, we assume that the uncertain conductivity is expressed as a linear expansion of a countable number of random variables which are uniformly distributed in the compact interval $[-1,1]$. The coefficient functions of this expansion follow a decay rate in the sup norm. We show that the sparse interpolation convergence results  established in \cite{Chkifa2013} for solving the forward elliptic problems hold for the  forward EIT equation. In particular, under certain assumptions on the decaying rate in the conductivity's expansion in terms of the uniform random variables, there exists a nested sequence of lower sets of interpolation points so that the error estimate for the interpolation approximation  decays as a polynomial order of the cardinality of these sets. Given this explicit rate of convergence, %for the collocation method such as those established in \cite{Chkifa2013}, 
we show the convergence of the sparse interpolation surrogate MCMC approximation. The error is expressed in terms of the cardinality of the set of interpolation points, the accuracy  of the FE approximation for the forward EIT equation, and the number of MCMC samples. 

For the sparse interpolation collocation method, the interpolation nodes may be chosen adaptively in a straightforward and unambiguous manner. To obtain an explicit convergence rate for the surrogate MCMC method, we need an explicit convergence rate for the approximation of the stochastic/parametric forward equation. The existence results of a set of gpc modes and interpolation points for such a rate to exist in \cite{Cohen2010, Cohen2011} and \cite{Chkifa2013} do not provide a constructive manner to identify these sets; this is still an open research problem. In this paper, following the literature, e.g, \cite{Chkifa2013, Gerstner2003}, we choose the set of interpolation points adaptively. This is a simple and unambiguous procedure, where a new interpolation node added so that the set of interpolation points remains a lower set is very likely to reduce the approximation error significantly. This is different from the surrogate procedures considered in the literature, e.g., \cite{Marzouk2009,Hakula2014,Hyvonen2015}, where the gpc modes are chosen isotropically, basing on a uniform upper bound for the polynomial degree of the gpc terms. When the coefficient functions in the expansion of the conductivity follow a decaying rate (which is typically the case for the Karhunen-Loeve expansion for a random variable with a sufficiently smooth covariance), the adaptive approach may produce more accurate results than the approach of choosing the interpolation nodes isotropically by following a simple bound for the polynomial degree, as we demonstrate in the numerical experiment section. 

The outline of the paper is as follows. In section \ref{sect:Bayesian_formulation}, we formulate EIT as a Bayesian inverse problem.  In section \ref{sect:finite_truncation}, we approximate the posterior using finite-dimensional truncation of the conductivity and finite element solution of the resulting truncated EIT forward equation.  Section \ref{sect:Lagrange} reviews the multivariate Lagrange interpolation and an adaptive algorithm introduced in \cite{Chkifa2013} to construct polynomial spaces. We then use the sparse interpolation approximation for the forward equation to construct an approximation for the posterior measure.  In section \ref{sect:MH_MCMC}, we sample the posterior probability by MCMC, using the adaptive sparse interpolation as a surrogate. We use dimension-robust MCMC methods, in particular, the independence sampler and the Reflection Random Walk Metropolis. We present a rigorous result on the convergence of the interpolation surrogate MCMC. When the set of interpolation points is chosen so that an explicit interpolation convergence rate in terms of the cardinality of this set is available, we present a rigorous convergence rate for the sparse interpolation surrogate MCMC.  Numerical examples are presented in section \ref{sect:numeric}. We first consider a conductivity which is expressed as a linear combination of Haar wavelet functions, following   the coefficient of the elliptic equation considered in \cite{Chkifa2013}. We experiment on the case of a reference binary conductivity. The numerical results show that adaptive sparse interpolation surrogate MCMC can recover the binary conductivity with an equivalent quality to that for a plain MCMC process with the same number of MCMC samples, requiring an exponentially less computation time. Though the theory is developed for conductivities which depend linearly on uniform random variables, the sparse interpolation surrogate method equally works in the case of a non-linear dependence. We investigate numerically the log-uniform case, where the natural logarithm of the conductivity depends on the uniformly distributed random variables linearly. The numerical result shows that for the log-uniform conductivity, the adaptive sparse interpolation MCMC recovers the conductivity with a high quality.  Proofs to some theoretical results are presented in  the appendices.

Finally, we mention that although we  consider the forward EIT problem in this paper, the sparse interpolation surrogate MCMC in this paper works for other equations. Furthermore, the case where the coefficients depend on Gaussian random variables and may not be uniformly bounded above and away from zero can be treated equally effectively by sparse polynomial interpolation, and is the subject of our forthcoming works. 

Throughout the paper, by $c$ and $C$, we denote a generic constant whose values may differ between different appearances. Repeated indices indicate summation.

	\section{Bayesian formulation of EIT}\label{sect:Bayesian_formulation}
	\subsection{The forward model}\label{subsect:forward}
We use the smoothened complete electrode model (smoothened CEM), proposed in \cite{Hyvonen2017}, as the forward model.  Let $D$ be a Lipschitz domain in $\mathbb{R}^d$ ($d=2,3$). We attach $K$ electrodes $\{E_k\}_{k=1}^{K}$, which are separated, nonempty, open, connected subsets of the boundary $\partial D$. We denote  by $E=\cup_{k=1}^{K}E_k$. 
	The conductivity $\sigma\in L^\infty(D)$ inside $D$ is assumed to satisfy 
	\begin{equation}\label{sigma_bounded_assumption}
		\sigma^- \le \sigma(x) \le \sigma^+\; \forall x\in D
	\end{equation}
	for some $\sigma^-, \sigma^+ \in (0,\infty)$. The contact admittance $\zeta\in L^\infty(\partial D)$ is such that 
	\begin{equation}\label{zeta_assumptions}
		\zeta\ge 0, \zeta|_{\partial D\backslash \bar{E}}\equiv 0, \zeta|_{E_k}\not\equiv 0\;\forall k.
	\end{equation}
	It follows from \eqref{zeta_assumptions} that there exists open subsets $e_k\subset E_k, k=1,\dots, K$, and $\zeta^->0$ such that 
	\begin{equation}\label{zeta_lower_bound}
		\zeta\ge \zeta^-\;a.e\;\text{on}\;\cup_{k=1}^{K}e_k.
	\end{equation}
	For each $k=1,\dots, K$, a current $I_k$ is applied to the electrode $E_k$. The current pattern is represented by a vector $I=(I_1,\dots,I_K)$ in 
	\begin{equation*}
		\mathbb{R}^K_{\diamond}:=\{U\in \mathbb{R}^K:U_1+\dots+U_K=0\}. 
	\end{equation*}
	The voltages $V_1,\dots,V_K$ at the corresponding electrodes are measured. The voltage vector $V=(V_1,\dots,V_K)\in \mathbb{R}^K$ is associated with a piecewise constant function $V:\partial D\to \mathbb{R}$ defined by 
	\begin{equation*}
		V:=\sum_{k=1}^K V_k1_{E_k},
	\end{equation*} 
	where $1_{E_K}$ denotes the characteristic function of the set $E_k$. 
	It should be clear from the context whether $V$ denotes a vector or a function on the boundary. 
	
	%According to the smoothened CEM
	The potential $v:D\to \mathbb{R}$ inside $D$ and the voltage vector $V$ on the electrodes satisfy the following elliptic PDE
	\begin{equation}\label{SCEM}
		\begin{cases}
			-\nabla \cdot (\sigma \nabla v)=0\quad \text{in } D,\\
			\int_{E_k}\nu \cdot \sigma \nabla v dS=I_k,\quad k=1,\dots,K, \\
			\nu \cdot \sigma \nabla v  =\zeta(V-v)\quad \text{on }\partial D,
		\end{cases}
	\end{equation}
	where $\nu \in L^\infty(\partial D,\mathbb{R}^d)$ is the exterior unit normal of $\partial D$. 
	%The contact admittance function $\zeta$ is positive inside each $E_k$ and is zero outside.
	When the contact admittance is constant on the electrodes and $0$ elsewhere, i.e., $\zeta = \sum_{k=1}^K \frac{1}{z_k} 1_{E_k}$, where $z_k$ are constants, the problem becomes the standard CEM problem (see, e.g., \cite{Somersalo1992}). We consider the model of a contact admittance $\zeta$ to facilitate the $H^2(D)$ regularity of the solution $v$ of the forward equation, which enables the optimal convergence rate of the FE approximation of the forward equation in the analysis when $\zeta$ is sufficiently regular. However, the sparse interpolation surrogate sampling procedure we develop in this paper works for the general case, albeit with a lower convergence rate for the forward FE solver when $\zeta$ is not sufficiently regular.

	In \cite{Hyvonen2017}, following the reasoning in \cite{Somersalo1992},  the boundary value problem \eqref{SCEM} is formulated in variational form as follows. 
	Let ${\bf 1}=(1,1,\ldots,1)^{\top}\in\mathbb{R}^K$. We consider the function space 
	\begin{equation*}
		\mathcal{H}^1:=(H^1(D)\oplus \mathbb{R}^K)/\mathbb{R},
	\end{equation*}
	where $(v,V)\in H^1(D)\oplus\IR^K$ and $(v+c, V+c{\bf 1})$ are considered as being in the same equivalence class. The space $\mathcal{H}^1 $ is equipped with the norm
	\[
	\|(v,V)\|_{\mathcal{H}^1}=\inf_{c\in\mathbb{R}}\|v-c\|_{H^1(D)}+|V-c{\bf 1}|
	\]
	where $|\cdot|$ denotes the norm in the Euclidean space $\mathbb{R}^{K}$.
	Then $(v,V)\in \mathcal{H}^1$ is a weak solution to \eqref{SCEM} if and only if 
	\begin{equation}\label{variational}
		B((v,V),(w,W);\sigma)=I\cdot W\quad \forall\;(w,W)\in \mathcal{H}^1, 
	\end{equation}
	where the bilinear form $B:\mathcal{H}^1\times \mathcal{H}^1\to \mathbb{R}$ is defined as 
	\begin{equation*}
		B((v,V),(w,W);\sigma):=\int_D \sigma \nabla v\cdot \nabla w dx+\int_{\partial D}\zeta (V-v)(W-w)dS.
	\end{equation*}
	Under assumptions \eqref{sigma_bounded_assumption} and \eqref{zeta_assumptions}, the bilinear form $B$ is bounded and coercive (see \cite{Hyvonen2017}). 	Let $\zeta^+:=||\zeta||_{L^\infty(\partial D)}$. Examining the proof of Lemma 2.1 in \cite{Hyvonen2017}, we deduce 
	\begin{equation*}
		|B((v,V),(w,W))|\le C\max\{\sigma^+,\zeta^+\}||(v,V)||_{\mathcal{H}^1}||(w,W)||_{\mathcal{H}^1}
	\end{equation*}
	and 
	\begin{equation*}
		B((v,V),(v,V))\ge C\min\{\sigma^-,\zeta^-\}||(v,V)||_{\mathcal{H}^1}^2, 
	\end{equation*}
	for all $(v,V)$ and $(w,W)$ in ${\mathcal H}^1$. 
	Hence, by the Lax-Milgram lemma, there exists a unique solution $(v,V)\in \mathcal{H}^1$ to the variational problem \eqref{variational}  which satisfies
	\begin{equation}\label{H1_norm_bound}
		||(v,V)||_{\mathcal{H}^1}\le C\frac{|I|}{\min\{\sigma^-, \zeta^-\}}.
	\end{equation} 
	
	The choice of a representative in the quotient space corresponds to the choice of a ground or reference voltage.  In the rest of the paper, without loss of generality, we choose the representative $(v,V)$ of the equivalence class such that 
	\begin{equation} \label{sumV=0}
		V_1+\dots+V_K=0.
	\end{equation}
	Thus, given a conductivity $\sigma$ such that \eqref{sigma_bounded_assumption} holds, each current pattern $I\in \mathbb{R}^K_{\diamond}$ gives rise to a voltage vector $V=V(\sigma)\in \mathbb{R}^K_{\diamond}$. 
	The relation between current patterns and voltage vectors can be described by a linear mapping $R(\sigma):\mathbb{R}^K_{\diamond}\to \mathbb{R}^K_{\diamond}$, defined by 
	\begin{equation*}
		R(\sigma)I:=V(\sigma).  
	\end{equation*}

	\begin{lemma}\label{V<=(v,V)}
		For $(v,V)\in \mathcal{H}^1$ satisfying $V\in \mathbb{R}^K_\diamond$,   
		\begin{equation*}
			|V|\le ||(v,V)||_{\mathcal{H}^1}. 
		\end{equation*}
	\end{lemma}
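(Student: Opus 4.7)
The plan is to exploit the orthogonality between $V$ and $\mathbf{1}$ in $\mathbb{R}^K$ implied by the condition $V\in\mathbb{R}^K_\diamond$, and then pass to the infimum in the definition of the quotient norm.

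First I would observe that the condition $V_1+\dots+V_K=0$ is exactly $V\cdot \mathbf{1}=0$. Hence for any $c\in\mathbb{R}$, the Pythagorean identity in $\mathbb{R}^K$ gives
\begin{equation*}
|V-c\mathbf{1}|^2 = |V|^2 + c^2|\mathbf{1}|^2 = |V|^2 + K c^2 \ge |V|^2,
\end{equation*}
so that $|V-c\mathbf{1}|\ge |V|$ for every $c\in\mathbb{R}$.

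Next, since $\|v-c\|_{H^1(D)}\ge 0$, I would write, for every representative shift $c$,
\begin{equation*}
\|v-c\|_{H^1(D)} + |V-c\mathbf{1}| \;\ge\; |V-c\mathbf{1}| \;\ge\; |V|.
\end{equation*}
Taking the infimum over $c\in\mathbb{R}$ on the left-hand side yields precisely $\|(v,V)\|_{\mathcal{H}^1}\ge |V|$, which is the claim.

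There is essentially no obstacle here; the statement is a direct consequence of the definition of the quotient norm combined with the orthogonality of $V$ to the constant vector $\mathbf{1}$. The only point to be careful about is that the inequality must hold for \emph{every} admissible $c$ before taking the infimum, which is why I bound $\|v-c\|_{H^1(D)}$ from below by zero rather than trying to balance it against $|V-c\mathbf{1}|$.
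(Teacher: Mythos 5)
Your proof is correct and follows essentially the same route as the paper: both drop the nonnegative $H^1$ term and reduce to showing $\inf_{c\in\mathbb{R}}|V-c{\bf 1}|=|V|$, which the paper obtains by noting the quadratic $c\mapsto|V-c{\bf 1}|^2$ is minimized at $c=(V_1+\dots+V_K)/K=0$ while you phrase the same computation as a Pythagorean identity from $V\cdot{\bf 1}=0$.
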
 
	\begin{proof}
		We have $||(v,V)||_{\mathcal{H}^1}\ge \inf_{c\in \mathbb{R}}|V-c|.$ The quadratic function $c\mapsto |V-c|^2$ attains its minimum at $c=\frac{V_1+\dots+V_K}{K}=0$.
	\end{proof}
	
	%In Appendix \ref{appendix:complex_SCEM}, to establish a bound for the coefficient of the Taylor expansion of the solution of the parametric problem with respect to the parameters, we consider the general setting where the conductivity and current pattern can be complex-valued. 

	\subsection{The inverse problem}
	We apply $K-1$ linearly independent current patterns $I^{(1)},\dots,I^{(K-1)}\in \mathbb{R}^K_{\diamond}$. The given data are noisy measurements of the corresponding voltage vectors $V^{(k)}(\sigma):=R(\sigma)I^{(k)}, k=1.\dots,K-1$. We concatenate the $K-1$ voltage vectors to form the forward map $\mathcal{V}(\sigma):=(V^{(1)}(\sigma),\dots,V^{(K-1)}(\sigma))$. We assume that the measurement noise is centered Gaussian. That is, we assume that the observed data is 
	\begin{equation}\label{data_sigma}
		\delta = \mathcal{V}(\sigma) + \vartheta, 
	\end{equation}
	with $\vartheta\sim N(0,\Sigma)$, where $\Sigma$ is a covariance matrix of size $K(K-1)\times K(K-1)$. 
	
	The inverse problem is to reconstruct the conductivity $\sigma$ from the data $\delta$. %This problem is well-known to be severely ill-posed.  
	
	\subsection{Parametrization}
	We assume that the conductivity $\sigma$ is parametrized by a sequence $(y_j)_{j\ge 1}\subset [-1,1]$. In other words, we assume that 
	\begin{equation*}
		\sigma(x)=\sigma(x,\by),
	\end{equation*}
	where $\by=(y_1, y_2, \ldots)\in [-1,1]^{\mathbb{N}}$ is an unknown parameter. 
	We consider the parametric conductivity of the form 
	\begin{equation}\label{affine_para}
		\sigma(x,\by)= \bar{\sigma}(x)+\sum_{j=1}^{\infty}y_j\psi_j(x), \quad  \by=(y_1,y_2,\ldots)\in [-1,1]^\mathbb{N},
	\end{equation}
	where $\bar{\sigma}, \psi_j$ are real-valued functions in $ L^\infty(D)$. 
	Though we only develop the sparse interpolation MCMC approach for  conductivities of the form \eqref{affine_para}, in the numerical experiment section, we show that the sparse interpolation MCMC method works equally well for the case of log-uniform conductivities, i.e. when the conductivity is of the form
	\begin{equation}\label{log_affine_para}
		\sigma(x,\by)= \exp\left(\bar{\sigma}(x)+\sum_{j=1}^{\infty}y_j\psi_j(x)\right), \quad \by\in [-1,1]^\mathbb{N}. 
	\end{equation}
	
	%Such parametrizations can be obtained from the Karhunen-Loeve expansion for stochastic processes or by randomizing coefficients in well-known expansions for functions, such as Fourier, wavelet.  
	
	To ensure uniform ellipticity with respect to all the parameter sequences $\by$, we assume that there exist constants $\sigma^-$ and $\sigma^+$ independent of $\by$ such that 
	\begin{equation}\label{elliptic_parametric}
		0<\sigma^-\le \sigma(x,\by)\le \sigma^+<\infty\; \forall x\in D, \by\in [-1,1]^\mathbb{N}. 
	\end{equation}
	We note that \eqref{elliptic_parametric} is equivalent to the following condition: 
	\begin{equation}\label{sum_psi}
		\sum_{j=1}^{\infty}|\psi_j(x)|\le \min\{\bar{\sigma}(x)-\sigma^-,\sigma^+ -\bar{\sigma}(x) \}\; \forall x\in D. 
	\end{equation}
	Then $\mathcal{V}(\sigma(\cdot,\by))$ is well-defined for every $\by\in [-1,1]^\mathbb{N}$, and is uniformly bounded with respect to all $\by\in [-1,1]^{\mathbb N}$.

	\subsection{Bayesian inversion}
	With the parametrization $\sigma(x)=\sigma(x,\by)$, inferences about the conductivty $\sigma$ is equivalent to inferences about $\by$. For $\by\in [-1,1]^\mathbb{N}$, let the forward map be
	\begin{equation*}
		\mathcal{G}(\by):=\mathcal{V}(\sigma(\cdot,\by)). 
	\end{equation*}
	%The map $\mathcal{G}$ is called the forward map. 
	The data in \eqref{data_sigma} can be rewritten as 
	\begin{equation*}
		\delta = \mathcal{G}(\by)+\vartheta,\quad \vartheta\sim N(0,\Sigma). 
	\end{equation*}
	%In the Bayesian approach, we treat the unknown $y\in [-1,1]^\mathbb{N}$ as a random variable taking values in $[-1,1]^\mathbb{N}$. 
	Let $U=[-1,1]^{\mathbb N}$; $U$ is equipped with the tensor product $\sigma$ algebra
	\[
	\Theta=\bigotimes_{j=1}^\infty {\mathcal B}(\IR),
	\]
	where ${\mathcal B}(\IR)$ is the Borel sigma algebra on $\IR$.
	We assume that the $y_j$ ($j\in\mathbb{N}$) are uniformly distributed in $[-1,1]$. The prior probability measure in $(U,\Theta)$ is defined as 
	\begin{equation}
		d\mu_0(\by)=\bigotimes_{j=1}^\infty{dy_j\over 2}.
		\label{eq:mu0}
	\end{equation}
	%We place a prior measure $\mu_0$ on the parameter space $[-1,1]^\mathbb{N}$. We are interested in the 
	Our aim is to determine the posterior probability measure $\mu^\delta=\mathbb{P}(\by|\delta)$. % of the state $y$ given data $\delta$. 
	%Adapting the proofs in \cite{Dunlop2016}, we obtain the existence of the posterior $\mu^\delta$ and the well-posedness of the Bayesian inverse problem being considered. 
	%We have the following existence result. 
	We define the misfit function as
	\begin{equation*}
			\Phi(\by;\delta):=\frac{1}{2}|\delta-\mathcal{G}(\by)|_\Sigma^2 = \frac{1}{2}|\Sigma^{-1/2}(\delta-\mathcal{G}(y))|^2.
	\end{equation*}

	We have the following existence result.
	\begin{proposition}\label{exist_post_uni}
		The posterior $\mu^\delta$ is absolutely continuous with respect to $\mu_0$ and satisfies 
		\begin{equation}
			\frac{d\mu^\delta}{d\mu_0}(\by) \propto \exp\left(-\Phi(\by;\delta)\right).
		\label{eq:mudelta}
		\end{equation}
%		where $\Phi$ is the misfit function defined by
%		\begin{equation*}
%			\Phi(\by;\delta):=\frac{1}{2}|\delta-\mathcal{G}(\by)|_\Sigma^2 = \frac{1}{2}|\Sigma^{-1/2}(\delta-\mathcal{G}(y))|^2.
%		\end{equation*}
	\end{proposition}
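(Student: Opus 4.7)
The plan is to apply the infinite-dimensional Bayes' theorem in the form of Stuart \cite{Stuart2010} (see also Cotter et al. \cite{Cotter2009}), which requires verifying that the forward map $\mathcal{G}:U\to\mathbb{R}^{K(K-1)}$ is $\Theta$-measurable and that the normalizing constant
\[
Z(\delta):=\int_U \exp(-\Phi(\by;\delta))\,d\mu_0(\by)
\]
is strictly positive and finite. Once these two facts are in hand, the density formula \eqref{eq:mudelta} follows from a direct application of Bayes' rule for product probability spaces together with the fact that the noise $\vartheta$ is independent of $\by$ with a Gaussian density proportional to $\exp(-\frac{1}{2}|\cdot|_\Sigma^2)$.

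First I would establish measurability. The coordinate projections $\by\mapsto y_j$ are measurable by construction of $\Theta$, and the partial sums $\bar\sigma+\sum_{j=1}^N y_j\psi_j$ converge uniformly in $x$ to $\sigma(\cdot,\by)$ in $L^\infty(D)$ by the summability condition \eqref{sum_psi}, so $\by\mapsto\sigma(\cdot,\by)$ is a measurable (indeed continuous) map from $(U,\Theta)$ into the subset $\{\sigma\in L^\infty(D):\sigma^-\le\sigma\le\sigma^+\}$. Next, I would show that the solution map $\sigma\mapsto(v,V)$ of the variational problem \eqref{variational} is Lipschitz continuous into $\mathcal{H}^1$: given two conductivities $\sigma_1,\sigma_2$ in this set, subtracting the two weak formulations, using coercivity with constant $C\min\{\sigma^-,\zeta^-\}$, and the boundedness estimate \eqref{H1_norm_bound} for one of the solutions, yields an estimate of the form $\|(v_1-v_2,V_1-V_2)\|_{\mathcal{H}^1}\le C\|\sigma_1-\sigma_2\|_{L^\infty(D)}|I|$. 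Combined with Lemma \ref{V<=(v,V)} applied to each of the $K-1$ current patterns, this gives continuity of $\mathcal{G}$, hence measurability of $\Phi(\cdot;\delta)$.

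Second, I would verify the normalizing constant. By \eqref{H1_norm_bound} and Lemma \ref{V<=(v,V)}, $|\mathcal{G}(\by)|$ is bounded by a constant $M$ independent of $\by\in U$, so $\Phi(\by;\delta)\le \tfrac{1}{2}|\Sigma^{-1/2}|^2(|\delta|+M)^2=:C(\delta)<\infty$ uniformly in $\by$. Therefore $\exp(-\Phi(\by;\delta))\in[e^{-C(\delta)},1]$, and since $\mu_0$ is a probability measure, $0<e^{-C(\delta)}\le Z(\delta)\le 1$.

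Finally, I would conclude by applying the version of Bayes' theorem for the product space $(U\times\mathbb{R}^{K(K-1)},\Theta\otimes\mathcal{B}(\mathbb{R}^{K(K-1)}))$ equipped with the joint law $\mu_0\otimes N(0,\Sigma)$ shifted by $\mathcal{G}(\by)$: the conditional distribution of $\by$ given $\delta$ has density with respect to $\mu_0$ proportional to $\exp(-\Phi(\by;\delta))$, which is precisely \eqref{eq:mudelta}. The main obstacle is really the Lipschitz estimate for the EIT solution map in $L^\infty$ data perturbations; everything else is either a standard application of Fubini/Bayes or a direct consequence of the uniform bounds already recorded in \eqref{H1_norm_bound}.
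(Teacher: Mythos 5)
Your proposal is correct and follows essentially the same route as the paper: establish continuity (hence measurability) of $\mathcal{G}$ by composing the continuity of $\by\mapsto\sigma(\cdot,\by)$ with the Lipschitz dependence of the solution on the conductivity, and then invoke the abstract Bayes theorem of Cotter et al.\ \cite{Cotter2009}. The Lipschitz estimate you sketch for the solution map is precisely the paper's Lemma \ref{M1,2} (estimate \eqref{M1-M2} in the appendix), and your explicit verification of the normalizing constant is a detail the paper leaves implicit in citing Theorem 2.1 of \cite{Cotter2009}.
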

	%{\todo Huy: at least mention something on the measurability of the forward map, or better supply a short proof}

	\begin{proof}	
	%We denote by $\mathcal{A}(D)$ the class of $\sigma\in L^\infty(D)$ such that \eqref{elliptic_parametric} is satisfied for some $\sigma^+,\sigma^->0$. On $\mathcal{A}(D)$, we consider the norm of $L^{\infty}(D)$. Similar to Proposition 2.7 in \cite{Dunlop2016}, 
	%we obtain in Lemma \ref{M1,2} the continuous dependence of the forward solution $(v(\sigma),V(\sigma))$ on the conductivity $\sigma$.   
	%It follows from Lemma \ref{V<=(v,V)} that the map $\sigma\mapsto V(\sigma)$ is also continuous. Since $\by\mapsto \sigma(\by)$ is continuous, the forward map $\mathcal{G}$ is continuous, hence measurable. 
	 From Lemma \ref{M1,2}, $L^\infty(D)\ni\sigma\mapsto (v(\sigma),V(\sigma))\in \mathcal{H}^1$ is continuous. Since $U\ni\by\mapsto\sigma(\by)\in L^\infty(D)$ is continuous,  the forward map $\mathcal{G}$, as a map from $U$ to ${\mathbb R}^K$ is continuous, hence measurable. From Theorem 2.1 in \cite{Cotter2009}, we get the conclusion.
	
	%By applying the Bayes theorem for functions (see Theorem 2.1 in \cite{Cotter2009}), we get the conclusion.
	\end{proof} 
	
	For two measures $\mu$ and $\mu'$ which are both absolutely continuous with respect to a measure $\mu_0$, the Hellinger distance between them is defined as 
	\begin{equation*}
		d_{Hell}(\mu,\mu'):=\sqrt{\left(\frac{1}{2}\int \left(\sqrt{\frac{d\mu}{d\mu_0}}-\sqrt{\frac{d\mu'}{d\mu_0}}\right)^2d\mu_0\right)}.
	\end{equation*} 
	We have the following result on the well-posedness of the posterior measure.
	\begin{proposition}\label{well_posed}
		For every $r>0$ and $\delta,\delta'$ such that $|\delta|_\Sigma,|\delta'|_\Sigma\le r$,
		\begin{equation*}
			d_{Hell}(\mu^\delta,\mu^{\delta'})\le C(r)|\delta-\delta'|_\Sigma.
		\end{equation*}
	\end{proposition}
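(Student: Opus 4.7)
The plan is to follow the now-standard route of Stuart \cite{Stuart2010} (see also Theorem 4.2 of \cite{Cotter2009}) for well-posedness of Bayesian inverse problems: I would establish that the potential $\Phi(\cdot;\delta)$ is uniformly bounded above and Lipschitz in the data, derive a positive lower bound on the normalizing constants $Z(\delta):=\int_U e^{-\Phi(\by;\delta)}\,d\mu_0(\by)$, and then compare the two Radon--Nikodym densities in \eqref{eq:mudelta} directly.

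First I would note that, since $\sigma^-$ in \eqref{elliptic_parametric} is independent of $\by$, the a priori bound \eqref{H1_norm_bound} together with Lemma \ref{V<=(v,V)}, applied to each of the $K-1$ current patterns, produces a constant $M>0$ depending only on $\sigma^-$, $\zeta^-$ and the currents such that $|\mathcal{G}(\by)|_\Sigma\le M$ uniformly in $\by\in U$. Combined with $|\delta|_\Sigma,|\delta'|_\Sigma\le r$ this yields $0\le\Phi(\by;\delta)\le\tfrac12(r+M)^2$. The polarization identity $|a|_\Sigma^2-|b|_\Sigma^2=\langle a-b,a+b\rangle_\Sigma$, applied with $a=\delta-\mathcal{G}(\by)$ and $b=\delta'-\mathcal{G}(\by)$, gives
\begin{equation*}
|\Phi(\by;\delta)-\Phi(\by;\delta')|\le (r+M)\,|\delta-\delta'|_\Sigma\qquad\forall\,\by\in U.
\end{equation*}

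These two estimates let me control $Z(\delta)$. The upper bound on $\Phi$ gives $Z(\delta)\ge e^{-(r+M)^2/2}=:Z_-(r)>0$; the Lipschitz estimate and $|e^{-s}-e^{-t}|\le|s-t|$ for $s,t\ge0$ imply $|Z(\delta)-Z(\delta')|\le(r+M)\,|\delta-\delta'|_\Sigma$, and the elementary identity
\begin{equation*}
Z(\delta)^{-1/2}-Z(\delta')^{-1/2}=\frac{Z(\delta')-Z(\delta)}{\sqrt{Z(\delta)}\sqrt{Z(\delta')}\bigl(\sqrt{Z(\delta)}+\sqrt{Z(\delta')}\bigr)},
\end{equation*}
combined with the lower bound $Z_-(r)$, yields $|Z(\delta)^{-1/2}-Z(\delta')^{-1/2}|\le C(r)\,|\delta-\delta'|_\Sigma$.

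To finish, I would add and subtract $Z(\delta)^{-1/2}e^{-\Phi(\by;\delta')/2}$ and split
\begin{equation*}
\sqrt{\frac{d\mu^\delta}{d\mu_0}}-\sqrt{\frac{d\mu^{\delta'}}{d\mu_0}}=\frac{e^{-\Phi(\by;\delta)/2}-e^{-\Phi(\by;\delta')/2}}{\sqrt{Z(\delta)}}+e^{-\Phi(\by;\delta')/2}\left(\frac{1}{\sqrt{Z(\delta)}}-\frac{1}{\sqrt{Z(\delta')}}\right),
\end{equation*}
then square, integrate against $\mu_0$, and apply $(a+b)^2\le 2a^2+2b^2$. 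For the first term I use $|e^{-s/2}-e^{-t/2}|\le\tfrac12|s-t|$ together with the Lipschitz bound on $\Phi$; for the second, the bound on $|Z(\delta)^{-1/2}-Z(\delta')^{-1/2}|$ combined with $e^{-\Phi}\le1$. Each piece contributes a term of order $|\delta-\delta'|_\Sigma^2$, which gives the claim after taking square roots. The only real subtlety is the uniform bound $M$ on $\mathcal{G}$; since $\sigma^-$ is parameter-independent this follows cleanly from the well-posedness results of subsection \ref{subsect:forward}, and the remainder of the argument is the canonical Stuart template.
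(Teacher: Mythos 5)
Your proposal is correct and follows essentially the same route as the paper: both proofs reduce the claim to the uniform bound $\sup_{\by\in U}|\mathcal{G}(\by)|_\Sigma<\infty$ (from \eqref{H1_norm_bound} and Lemma \ref{V<=(v,V)}) and the resulting bounds $\Phi(\by;\delta)\le\tfrac12(r+C)^2$ and $|\Phi(\by;\delta)-\Phi(\by;\delta')|\le(r+C)|\delta-\delta'|_\Sigma$. The only difference is that the paper then invokes Theorem 2.4 of \cite{Hoang2012} to conclude, whereas you carry out the standard Hellinger-distance computation (lower bound on $Z(\delta)$, splitting the difference of square-root densities) explicitly; this is exactly the argument the paper itself writes out in Appendix \ref{section:error_estimate} for Proposition \ref{mu-muJ}, and your details are all sound.
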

	
	%{\todo Huy: at least mention where the proof can be found, or better sketch briefly the proof}
	\begin{proof}
		From \eqref{H1_norm_bound} and Lemma \ref{V<=(v,V)}, there exists a constant $C$ such that 
		\begin{equation}\label{G_bounded}
			\sup_{\by\in U} |\mathcal{G}(\by)|_\Sigma \le C<\infty. 
		\end{equation} 
		Hence for all $\by\in U$ and $|\delta|_\Sigma\le r$, 
		\begin{equation}\label{Phi_bounded}
			\Phi(y;\delta) \le \frac{1}{2}(r+C)^2.
		\end{equation}
		From the definition of $\Phi$, we have 
		\begin{align*}
			|\Phi(\by;\delta)-\Phi(\by;\delta')|&=\frac{1}{2}|\langle \delta+\delta'-2\mathcal{G}(\by),\delta-\delta' \rangle_\Sigma|\\
			&\le \frac{1}{2}(|\delta|+|\delta'|+2|\mathcal{G}(\by)|_\Sigma)|\delta-\delta'|_\Sigma. 
		\end{align*}
		Hence for every $y\in U$ and $\delta, \delta'$ such that $|\delta|_\Sigma, |\delta'|_\Sigma\le r$, 
		\begin{equation}\label{Phi_delta}
			|\Phi(\by;\delta)-\Phi(\by;\delta')|\le (r+C)|\delta-\delta'|. 
		\end{equation}
		With \eqref{Phi_bounded} and \eqref{Phi_delta}, the assertion then follows from Theorem 2.4 in \cite{Hoang2012}. 
	\end{proof}
	\section{Approximation of the posterior by finite truncation and finite elements}\label{sect:finite_truncation}
	\subsection{Finite-dimensional truncation}
	In numerical computation, we need to consider a finite number of parameters in expansion \eqref{affine_para}. For $\by\in [-1,1]^{\mathbb{N}}, J\in \mathbb{N}$, let $\by^J:=(y_1,\dots,y_J)$. We approximate $\sigma(x,\by)$ by 
	\begin{equation*}
		\sigma^J(x,\by):=\sigma(x,(y_1,\ldots,y_J,0,0,\dots)). 
	\end{equation*}
	The parametrization for $\sigma^J$ only depends on a vector with finite length $J$. 
	From \eqref{elliptic_parametric}, it follows that for all $J\in \mathbb{N}$: 
	\begin{equation}\label{sigma_J_bounded}
		0<\sigma^-\le \sigma^J(x,\by)\le \sigma^+<\infty\; \forall \by\in [-1,1]^ \mathbb{N}. 
	\end{equation}
	%Hence for each $y^J\in [-1,1]^J$, 
	We consider the approximation to the forward map $\mathcal{G}$ given by the truncated forward map
	\begin{equation*}
		\mathcal{G}^J(\by):=\mathcal{V}(\sigma^J(\cdot,\by)).
	\end{equation*}
	%is well-defined. 
	We then approximate the posterior measure $\mu^\delta$ by the measure $\mu^{J,\delta}$, whose density with respect to the prior $\mu_0$ is defined by
	\begin{equation}
		\frac{d\mu^{J,\delta}}{d\mu_0}(\by)\propto \exp(-\Phi^J(\by;\delta)),
		\label{eq:muJ}
	\end{equation}
	where 
	\begin{equation}
		\Phi^J(\by;\delta):=\frac{1}{2}|\delta-\mathcal{G}^J(\by)|_\Sigma^2.
		\label{eq:PhiJ}
	\end{equation}
		To estimate the distance between $\mu^{J,\delta}$ and $\mu^\delta$, we impose the following assumption about the decay rate of $(||\psi_j||_{L^\infty(D)})_{j\ge 1}$. 
	\begin{assumption}\label{lp-summability_assumption}
		There exist constants $s, C>0$ such that for all $j\in \mathbb{N}$, 
		\begin{equation*}
			||\psi_j||_{L^\infty(D)}\le Cj^{-(1+s)}.  
		\end{equation*}
	\end{assumption}  

	\begin{proposition}\label{mu-muJ}
		\begin{comment}
			\todo{Insert the result
				\[
				d_{Hell}(\mu^\delta,\mu^{J,\delta})\le cJ^{-s}
				\]
				here. Mention briefly how to prove it.
			} 
		\end{comment}
		
	Under Assumption \ref{lp-summability_assumption}, 
	\begin{equation*}
			d_{Hell}(\mu^\delta,\mu^{J,\delta})\le CJ^{-s}. 
	\end{equation*}
	\end{proposition}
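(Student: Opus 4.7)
The plan is to propagate the tail of the expansion \eqref{affine_para} through four layers: conductivity $\to$ solution $\to$ forward map $\to$ misfit $\to$ Hellinger distance, using exactly the abstract framework already invoked in the proof of Proposition \ref{well_posed}.

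First I would control the truncation error in the conductivity uniformly in $\by$. Since
\[
\sigma(x,\by)-\sigma^J(x,\by)=\sum_{j=J+1}^{\infty}y_j\psi_j(x),
\]
Assumption \ref{lp-summability_assumption} and the integral comparison $\sum_{j>J}j^{-(1+s)}\le s^{-1}J^{-s}$ yield $\|\sigma(\cdot,\by)-\sigma^J(\cdot,\by)\|_{L^\infty(D)}\le CJ^{-s}$, uniformly in $\by\in U$. Combined with the uniform ellipticity \eqref{sigma_J_bounded}, I would then invoke Lemma \ref{M1,2} (the stability of the solution map $L^\infty(D)\ni\sigma\mapsto(v(\sigma),V(\sigma))\in\mathcal{H}^1$) together with Lemma \ref{V<=(v,V)} applied to each of the $K-1$ current patterns to conclude
\[
|\mathcal{G}(\by)-\mathcal{G}^J(\by)|_\Sigma\le C\,\|\sigma(\cdot,\by)-\sigma^J(\cdot,\by)\|_{L^\infty(D)}\le CJ^{-s}
\]
uniformly in $\by\in U$.

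Next I would transfer this bound to the misfits exactly as in the proof of Proposition \ref{well_posed}. Writing
\[
\Phi(\by;\delta)-\Phi^J(\by;\delta)=\tfrac12\langle 2\delta-\mathcal{G}(\by)-\mathcal{G}^J(\by),\mathcal{G}^J(\by)-\mathcal{G}(\by)\rangle_\Sigma,
\]
the uniform bound \eqref{G_bounded} (which also holds for $\mathcal{G}^J$ by the same argument since \eqref{sigma_J_bounded} gives the same ellipticity constant) and Cauchy--Schwarz give
\[
|\Phi(\by;\delta)-\Phi^J(\by;\delta)|\le C(|\delta|_\Sigma+1)\,|\mathcal{G}(\by)-\mathcal{G}^J(\by)|_\Sigma\le C(\delta)\,J^{-s}.
\]
In the same way, $\Phi^J$ inherits the uniform upper bound \eqref{Phi_bounded}.

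Finally, I would feed the two ingredients (uniform boundedness of $\Phi,\Phi^J$ and the uniform $J^{-s}$-closeness of $\Phi$ and $\Phi^J$) into Theorem 2.4 of \cite{Hoang2012}, exactly as in the proof of Proposition \ref{well_posed}, to conclude $d_{Hell}(\mu^\delta,\mu^{J,\delta})\le CJ^{-s}$. The only step requiring care is the Lipschitz dependence of the solution on $\sigma$ in $L^\infty(D)$: this is where the smoothened CEM enters, and I would rely on Lemma \ref{M1,2} in the appendix rather than re-deriving it here; everything else is the same energy/algebra already present in the proofs of Propositions \ref{exist_post_uni} and \ref{well_posed}.
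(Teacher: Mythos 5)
Your proposal follows essentially the same route as the paper: tail bound on the conductivity via Assumption \ref{lp-summability_assumption}, Lipschitz stability of the solution map from Lemma \ref{M1,2} combined with Lemma \ref{V<=(v,V)} to get $\sup_{\by}|\mathcal{G}(\by)-\mathcal{G}^J(\by)|_\Sigma\le CJ^{-s}$, transfer to the misfits using the uniform bounds \eqref{G_bounded} and \eqref{GJ_bounded}, and then a standard Hellinger estimate. The one imprecision is the final citation: Theorem 2.4 of \cite{Hoang2012}, as deployed in Proposition \ref{well_posed}, controls the Hellinger distance under a perturbation of the \emph{data} $\delta$ with a fixed misfit, whereas here the data are fixed and the misfit itself is perturbed from $\Phi$ to $\Phi^J$; the paper instead writes out the corresponding (completely analogous) direct computation in Appendix \ref{section:error_estimate}, splitting $2d_{Hell}^2$ into the two terms $I_1,I_2$, using the uniform positive lower bound on the normalizing constants and $|e^{-a}-e^{-b}|\le|a-b|$. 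Since you have assembled exactly the two ingredients that argument needs, namely uniform boundedness of $\Phi,\Phi^J$ and $\sup_{\by}|\Phi-\Phi^J|\le CJ^{-s}$, this is a mis-citation rather than a gap.
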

	\begin{proof}
		From Assumption \ref{lp-summability_assumption}, for every $x\in D, \by\in U$ we have 
	\begin{equation}\label{sigma-sigmaJ}
		||\sigma(\cdot,\by)-\sigma^J(\cdot,\by)||_{L^\infty(D)}\le \sum_{j>J}||\psi_j||_{L^\infty(D)}\le C\int_J^{\infty} t^{-(1+s)}dt = CJ^{-s}. 
	\end{equation}
	Since $\sigma^J(x,\by)\ge \sigma^-$, it follows from \eqref{M1-M2} and Lemma \ref{V<=(v,V)} that
	\begin{equation*}
		|V(\sigma(\by))-V(\sigma^J( \by))|\le C||\sigma(\cdot,\by)-\sigma^J(\cdot,\by)||_{L^\infty(D)}. 
	\end{equation*}
	 Hence
	\begin{equation*}
		\sup_{\by\in U}|\mathcal{G}(\by)-\mathcal{G}^J(\by)|_\Sigma\le CJ^{-s}. 
	\end{equation*}
	From the definitions of $\Phi$ and $\Phi^J$, we deduce  
	\begin{align*}
		\sup_{\by\in U}|\Phi(\by)-\Phi^J(\by)|& = \frac{1}{2}|\langle \mathcal{G}( \by)-\mathcal{G}^J( \by), 2\delta + \mathcal{G}( \by) + \mathcal{G}^J(\by) \rangle_\Sigma|\\
		&\le \frac{1}{2}|\mathcal{G}(\by)-\mathcal{G}^J(\by)|_\Sigma (2|\delta|_\Sigma + |\mathcal{G}( \by)|_\Sigma + |\mathcal{G}^J( \by)|_\Sigma). 
	\end{align*}
	Similar to \eqref{G_bounded}, since $\sigma^J(\by)\ge \sigma^-$ for all $J\in \mathbb{N}, \by\in U$, it follows from \eqref{H1_norm_bound} and Lemma \ref{V<=(v,V)} that 
	\begin{equation}\label{GJ_bounded}
		\sup_{\by\in U, J\in \mathbb{N}}|\mathcal{G}^J(\by)|<\infty. 
	\end{equation} 
	As a result, 
	\begin{equation}\label{phi-phi_J}
		\sup_{\by\in U}|\Phi(\by)-\Phi^J(\by)|\le CJ^{-s}.
	\end{equation}
 	With \eqref{phi-phi_J}, \eqref{G_bounded} and \eqref{GJ_bounded}  in hand, the proof is completed by using a standard argument to estimate Hellinger distance (see, e.g., \cite{Cotter2009,Stuart2010}). The details are given in Appendix \ref{section:error_estimate}. 
	\end{proof}
	We may as well regard $\mu^{J,\delta}$ as a measure on the finite-dimensional space $U_J:=[-1,1]^J$. Let $\mu_0^J$ be the projection of $\mu_0$ from $[-1,1]^{\mathbb{N}}$ to $[-1,1]^J$. With some abuse of the notation, we denote $\mu^{J,\delta}$ as the measure in $[-1,1]^J$ with the density given by 
	\begin{equation*}
		\frac{d\mu^{J,\delta}}{d\mu_0^J}(\by^J)\propto \exp(-\Phi^J((y_1,\ldots,y_J,0,0,\ldots);\delta)). 
	\end{equation*}

	\subsection{FE approximation}
		Let $\{\mathcal{T}^l\}_{l\ge 1}$ be a family of shape-regular triangulations of $D$ with mesh-size $h_l=O(2^{-l})$.
	We denote by
	\begin{equation*}
		P^l=\{w\in C^0(\bar{D}): w\big|_{T}\in \mathcal{P}_1(T)\;\forall T\in \mathcal{T}^l \}\subset H^1(D),
	\end{equation*}
	where $\mathcal{P}_1(T)$ is the set of linear polynomials in the simplex $T\in \mathcal{T}^l$.
	We find the FE approximate solutions in the finite-dimensional subspace 
	\begin{equation*}
		\mathcal{V}^l=(P^l\oplus\mathbb{R}^K)/\mathbb{R}.
	\end{equation*}  
		We define the FE approximation of $\mathcal{G}^J$ as follows. Recall that 
	\begin{equation*}
		\mathcal{G}^J(\by)= \mathcal{V}(\sigma^J(\cdot,\by)) = (V^{(k)}(\sigma^J(\cdot,\by)))_{k=1}^{K-1}.
	\end{equation*}  
	For each $\by\in U$, since the bilinear form $B(\cdot,\cdot; \sigma^J(\by))$ is coercive and bounded, there exists a unique solution $(v^{J,l,(k)}(\by),V^{J,l,(k)}(\by))\in \mathcal{V}^l$ to the problem
	\begin{equation*}
		B((v^{J,l,(k)}(\by),V^{J,l,(k)}(\by)),(w^l,W^l);\sigma^J(\by))=I^{(k)}\cdot W^l\;\forall (w^l,W^l)\in \mathcal{V}^l.
	\end{equation*} 
 We approximate the forward map by concatenating these FE solutions into 
	\begin{equation*}
		\mathcal{G}^{J,l}(\by):= (V^{J,l,(k)}(\by))_{k=1}^{K-1}.
	\end{equation*} 
	We then approximate the posterior $\mu^\delta$ by the measure $\mu^{J,l,\delta}$ defined by 
	\begin{equation}
		\frac{d\mu^{J,l,\delta}}{d\mu_0}(\by)\propto \exp(-\Phi^{J,l}(\by;\delta)),
		\label{eq:muJl}
	\end{equation}
	where 
	\begin{equation}
		\Phi^{J,l}(\by;\delta):=\frac{1}{2}|\delta-\mathcal{G}^{J,l}(\by)|^2_{\Sigma}. 
		\label{eq:PhiJl}
	\end{equation}
	We may also view $\mu^{J,l,\delta}$ as a measure on $[-1,1]^J$. 
	
	To estimate the FE error, we make the following assumption about the regularity of $\zeta$ and $(\psi_j)_{j\ge 1}$. 
		\begin{assumption}\label{assumption:sigma,zeta}
		We assume that $D$ is a convex polygon, and each electrode $E_k$ is a proper subset of the interior of one affine edge ($d=2$) or surface ($d=3$) of $\partial D$. Further, $\zeta\in W^{1,\infty}(\partial D)$,
		%$\zeta\in H^t(\partial D)$ for some $t>\frac1{2}$, 
		$\bar{\sigma}\in W^{1,\infty}(D)$ and $ (\psi_j)_{j\ge 1}\subset W^{1,\infty}(D)$ such that
		\begin{equation*}
			\sup_{x\in D}\sum_{j\ge 1}\|\nabla \psi_j(x)\|_{L^{\infty}(D)}<\infty.
		\end{equation*}
	\end{assumption}
	We deduce from Proposition \ref{V-Vl} that if Assumptions \ref{assumption:sigma,zeta} holds, 
	\begin{equation*}
		|\mathcal{G}^{J}(\by)-\mathcal{G}^{J,l}(\by)|\le C2^{-2l}.%(\max\{\sigma^+,\zeta^+\})^3C_1(\sigma(\by),\zeta)^2|I|, 
	\end{equation*}
%	where 
%	$C_1(\sigma(\by),\zeta)=\frac{||\sigma^J(\by)||_{W^{1,\infty}(D)}+\sigma^{-}||\zeta||_{H^s(\partial(D))}||(\sigma^J)^{-1}(\by)||_{W^{1,\infty}(D)}}{\sigma^-\min\{\sigma^-,\zeta^-\}^2}$ {\ha which is uniformly bounded for all $\by\in U$}. 
%	In addition, Assumptions \ref{assumption:sigma,zeta} also implies 
%	\begin{equation*}
%		\sup_{J\in \mathbb{N}}\sup_{\by\in U_J} ||\sigma^J(\by)||_{W^{1,\infty}(D)} <\infty  
%	\end{equation*}
%	and 
%	\begin{equation*}
%		\sup_{J\in \mathbb{N}}\sup_{\by\in U_J} ||(\sigma^J)^{-1}(\by)||_{W^{1,\infty}(D)} <\infty.   
%	\end{equation*}
	As a result, we obtain the error estimate of the FE approximation of the truncated forward map.  
	\begin{proposition}\label{G-Gl}
		Under Assumption \ref{assumption:sigma,zeta}, there exists a constant $C$ which is independent of $l$ such that 
		\begin{equation*}
			\sup_{J\in \mathbb{N}}\sup_{\by\in U_J}|\mathcal{G}^{J}(\by)-\mathcal{G}^{J,l}(\by)|_\Sigma\le C2^{-2l}.
		\end{equation*}
	\end{proposition}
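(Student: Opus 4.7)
The plan is to upgrade the pointwise bound already deduced from Proposition \ref{V-Vl} to a bound that is uniform in both the truncation level $J$ and the parameter $\by\in U_J$, and then to transfer the estimate from the Euclidean norm to $|\cdot|_\Sigma$. First, I would fix $\by\in U_J$ and $J\in\mathbb{N}$, and for each current pattern $I^{(k)}$ ($k=1,\dots,K-1$) apply Proposition \ref{V-Vl} to the truncated forward problem with conductivity $\sigma^J(\cdot,\by)$. This yields componentwise estimates
\[
|V^{(k)}(\sigma^J(\by))-V^{J,l,(k)}(\by)|\le C(\sigma^J(\by),\zeta)\,2^{-2l},
\qquad k=1,\dots,K-1,
\]
and concatenating the $K-1$ components produces $|\mathcal{G}^J(\by)-\mathcal{G}^{J,l}(\by)|\le C(\sigma^J(\by),\zeta)\,2^{-2l}$.

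The core task is to argue that $C(\sigma^J(\by),\zeta)$ can be bounded by a constant independent of $J$ and $\by$. The constant in Proposition \ref{V-Vl} comes from (i) the ellipticity/continuity constants of $B(\cdot,\cdot;\sigma^J(\by))$, and (ii) the $H^2$-regularity constant of the exact solution, which in turn depends on the $W^{1,\infty}$ norm of the conductivity and of $\zeta$, together with the convex polygon geometry and the placement of the electrodes on affine pieces of $\partial D$. For (i), the bounds \eqref{sigma_J_bounded} guarantee that $\sigma^-\le\sigma^J(\by)\le\sigma^+$ uniformly in $J$ and $\by$, and $\zeta$ is fixed. For (ii), Assumption \ref{assumption:sigma,zeta} gives $\zeta\in W^{1,\infty}(\partial D)$ and
\[
\|\nabla\sigma^J(\cdot,\by)\|_{L^{\infty}(D)}
\le \|\nabla\bar\sigma\|_{L^{\infty}(D)}+\sum_{j=1}^{J}|y_j|\,\|\nabla\psi_j\|_{L^{\infty}(D)}
\le \|\nabla\bar\sigma\|_{L^{\infty}(D)}+\sum_{j\ge 1}\|\nabla\psi_j\|_{L^{\infty}(D)},
\]
which is finite and independent of $J$ and $\by$. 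The geometric hypotheses on $D$ and the $E_k$ guarantee the standard $H^2$-regularity underpinning the $O(2^{-2l})$ FE rate of Proposition \ref{V-Vl}. Thus $C(\sigma^J(\by),\zeta)$ admits a uniform upper bound $C$, and we obtain $\sup_{J,\by}|\mathcal{G}^J(\by)-\mathcal{G}^{J,l}(\by)|\le C\,2^{-2l}$.

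Finally, since $\Sigma$ is a fixed symmetric positive-definite matrix, $|\eta|_\Sigma=|\Sigma^{-1/2}\eta|\le\|\Sigma^{-1/2}\|\,|\eta|$ for every $\eta\in\mathbb{R}^{K(K-1)}$, so absorbing $\|\Sigma^{-1/2}\|$ into the constant yields the stated bound in the $|\cdot|_\Sigma$ norm.

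I expect the main obstacle to be step (ii): carefully inspecting the proof of Proposition \ref{V-Vl} to track the dependence of its implicit constant on the coefficient $\sigma$ and on $\zeta$, and verifying that every such dependence is controlled by quantities already bounded uniformly in $J$ and $\by$ through \eqref{sigma_J_bounded} and Assumption \ref{assumption:sigma,zeta}. Everything else—summation over $k$ and the equivalence between $|\cdot|$ and $|\cdot|_\Sigma$—is routine.
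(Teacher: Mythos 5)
Your proposal is correct and follows essentially the same route as the paper, which deduces the proposition directly from Proposition \ref{V-Vl}, with the uniformity in $J$ and $\by$ coming from the uniform $H^2$ bound of Proposition \ref{v_H2_bound} (itself resting on \eqref{sigma_J_bounded} and the uniform bound on $\sum_j\|\nabla\psi_j\|_{L^\infty(D)}$ from Assumption \ref{assumption:sigma,zeta}, exactly as you track). Your only addition is making the constant-tracking and the passage from $|\cdot|$ to $|\cdot|_\Sigma$ explicit, which the paper leaves implicit.
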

	Analogous to Proposition \ref{mu-muJ}, we have the following estimate.
	\begin{proposition}
		\begin{comment}
			\todo{Insert the result
				\[
				d_{Hell}(\mu^\delta,\mu^{J,l,\delta})\le c(J^{-s}+2^{-2l})
				\]
				here. Mention briefly how to prove it.
			}
		\end{comment}
		
		Under Assumptions \ref{lp-summability_assumption} and \ref{assumption:sigma,zeta}, 
		\begin{equation*}
				d_{Hell}(\mu^\delta,\mu^{J,l,\delta})\le C(J^{-s}+2^{-2l}).
	\end{equation*}
	\end{proposition}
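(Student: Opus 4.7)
The plan is to prove the bound by the triangle inequality for the Hellinger distance, splitting through the intermediate measure $\mu^{J,\delta}$:
\[
d_{Hell}(\mu^\delta,\mu^{J,l,\delta}) \;\le\; d_{Hell}(\mu^\delta,\mu^{J,\delta}) + d_{Hell}(\mu^{J,\delta},\mu^{J,l,\delta}).
\]
The first term is already controlled by $CJ^{-s}$ thanks to Proposition \ref{mu-muJ}, so the task reduces to showing $d_{Hell}(\mu^{J,\delta},\mu^{J,l,\delta})\le C\,2^{-2l}$ with a constant that is independent of $J$.

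For that second term, I would follow the same template as the proof of Proposition \ref{mu-muJ}. First I would record a uniform bound of the FE forward map: since the bilinear form $B(\cdot,\cdot;\sigma^J(\by))$ is coercive on $\mathcal{H}^1$ with coercivity constant $\min\{\sigma^-,\zeta^-\}$ independent of $J$ and $\by$, and since $\mathcal{V}^l\subset\mathcal{H}^1$, the FE solution enjoys the same a priori estimate \eqref{H1_norm_bound}, which together with Lemma \ref{V<=(v,V)} gives
\[
\sup_{J\in\mathbb{N},\,l\ge 1,\,\by\in U}\bigl|\mathcal{G}^{J,l}(\by)\bigr|_\Sigma \le C.
\]
Combining this with \eqref{G_bounded} and Proposition \ref{G-Gl}, I expand the difference of misfits as in the proof of Proposition \ref{mu-muJ},
\[
\bigl|\Phi^{J}(\by;\delta)-\Phi^{J,l}(\by;\delta)\bigr| = \tfrac12\bigl|\langle \mathcal{G}^J(\by)-\mathcal{G}^{J,l}(\by),\,2\delta+\mathcal{G}^J(\by)+\mathcal{G}^{J,l}(\by)\rangle_\Sigma\bigr|,
\]
which, using the Cauchy--Schwarz inequality in the $|\cdot|_\Sigma$ inner product and the uniform bounds above, yields
\[
\sup_{\by\in U}\bigl|\Phi^J(\by;\delta)-\Phi^{J,l}(\by;\delta)\bigr| \le C\,2^{-2l},
\]
with $C$ depending only on $|\delta|_\Sigma$ and the structural constants of the problem.

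With the uniform boundedness of $\Phi^J$ and $\Phi^{J,l}$ (the analogue of \eqref{Phi_bounded}) and the $L^\infty(U)$ bound on $\Phi^J-\Phi^{J,l}$ in hand, I would then invoke the standard Hellinger estimate (Theorem 2.4 in \cite{Hoang2012}, as was used at the end of the proof of Proposition \ref{mu-muJ}) applied to the pair of densities \eqref{eq:muJ}--\eqref{eq:muJl} on $U$ with reference measure $\mu_0$. This gives $d_{Hell}(\mu^{J,\delta},\mu^{J,l,\delta})\le C\,2^{-2l}$, and combining with the first piece of the triangle inequality completes the proof.

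The only real obstacle is obtaining the $J$-independent $L^\infty(U)$ bound on $\mathcal{G}^{J,l}$; everything else is a routine repetition of the Hellinger argument already carried out in Proposition \ref{mu-muJ}. This $J$-independence is free here because the ellipticity constants $\sigma^-,\zeta^-$ in \eqref{elliptic_parametric}--\eqref{zeta_lower_bound} and hence the coercivity constant of $B(\cdot,\cdot;\sigma^J(\by))$ do not depend on $J$ or $\by$, and the FE subspace $\mathcal{V}^l$ sits inside $\mathcal{H}^1$, so Cea's inequality transfers \eqref{H1_norm_bound} to the discrete solution with the same constant.
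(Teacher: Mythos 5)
Your proposal is correct and follows essentially the same route the paper intends: the paper proves this proposition by declaring it ``analogous to Proposition \ref{mu-muJ}'', i.e.\ precisely the triangle inequality through $\mu^{J,\delta}$ combined with Proposition \ref{G-Gl}, the uniform boundedness of $\mathcal{G}^{J,l}$ (recorded as \eqref{Vl_bounded} in the appendix), and the standard Hellinger estimate. The only cosmetic quibble is that the $J$- and $l$-independent bound on the discrete solution comes directly from testing the discrete variational problem with $(v^{J,l},V^{J,l})$ and using coercivity, rather than from C\'ea's lemma, but the conclusion you need is the same.
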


\begin{remark} In Assumption \ref{assumption:sigma,zeta}, we assume that $\zeta\in W^{1,\infty}(D)$ to get the optimal FE convergence rate $2^{-2l}$ for approximating $V$. However, this can be weakended to $\zeta\in H^t(\partial D)$ for $t>(d-1)/2$ (see \cite{Hyvonen2017}). When $\zeta$ is not sufficiently regular such as in the standard CEM model where it is only piecewise constant, the solution $v$ of the forward problem possesses a weaker regularity than $H^2(D)$. We get a weaker FE convergence rate. 
\end{remark}

	\section{Approximation of the posterior by Lagrange interpolation of the forward map}\label{sect:Lagrange}
	We approximate the forward map $\mathcal{G}$ by an interpolant of the multivariate function $\mathcal{G}^{J,l}$  with respect to $\by^J\in U_J=[-1,1]^J$. Therefore, we consider the problem of fitting a function $g:[-1,1]^J\mapsto \mathbb{R}^m$  (in our context $m=K(K-1))$ by a multivariate polynomial that agrees with $g$ at $N$ distinct points in $[-1,1]^J$. In the rest of the paper, for conciseness, instead of using $\by^J$, we use $\by$ to indicate vectors in $[-1,1]^J$. We simplify the presentation in \cite{Chkifa2013}, where $g$ is the solution of parametric  PDEs and the range of $g$  can be in an infinite-dimensional space. 
	%{\todo I think in Chkifa et al they presented interpolation for the case where $g$ is a general function, not really only for the solution of a parametric equation}.
	
	\subsection{Interpolation on sparse grids}
	Interpolation on full-tensor grids suffers from the curse of dimensionality as the number of interpolation points grows exponentially with the dimension $J$. The idea of sparse grids can be traced back to Smolyak \cite{Smolyak1963}. We consider Lagrange interpolation. Multivariate interpolation with other basis functions is studied in \cite{Barthelmann2000,Klimke2005}.

	Let $z_0, z_1, \dots, z_n, \dots$ be a sequence of distinct points in $[-1,1]$. Let $g$ be a univariate function in $ C[-1,1]$. The unique polynomial of degree at most 
	$n$ interpolating $g$ at the points $z_0,\dots,z_n$ is 
	\begin{equation}\label{Lagrange_1d}
		I_ng(z):=\sum_{k=0}^{n}g(z_k)l_k^n(z)\quad \text{for }z\in [-1,1], 
	\end{equation}
	where 
	\begin{equation*}
		l_k^n(z)=\prod_{j=0,j\neq k}^{n}\frac{z-z_j}{z_k-z_j}
	\end{equation*}
	are the univariate Lagrange polynomials satisfying $l_k^n(z_j)=\delta_{kj}$ for $0\le k,j\le n$. 
We define the univariate difference operator $\Delta_k$   as  
	\begin{equation*}
		\Delta_0 = I_0, \Delta_k:=I_k-I_{k-1}\;\text{for }k\ge 1. 
	\end{equation*}
We can then express 
	\begin{equation*}
		I_n = \sum_{k=0}^{n}\Delta_k.
	\end{equation*}
%	where the univariate difference operator $\Delta_k$ is defined recursively as  
%	\begin{equation*}
%		\Delta_0 = I_0, \Delta_k:=I_k-I_{k-1}\;\text{for }k\ge 1. 
%	\end{equation*}
%	
	
	The interpolant operator $I_n$ can be generalized to the multivariate setting as follows. For $\nu\in \mathbb{N}_0^J$, let 
	\begin{equation*}
		\bz_\nu:=(z_{\nu_1},\dots,z_{\nu_J})\in [-1,1]^J. 
	\end{equation*}
	For $g\in C([-1,1]^J,\mathbb{R}^m)$, the tensorized interpolant is defined by 
	\begin{equation*}
		I_\nu g := \sum_{k_1=0}^{\nu_1}\cdots \sum_{k_J=0}^{\nu_J}f(z_{k_1},\dots,z_{k_J}) \cdot (l_{k_1}^{\nu_1}\otimes \cdots \otimes l_{k_J}^{\nu_J}). 
	\end{equation*}
	We can also write 
	\begin{equation*}
		I_\nu = \otimes_{j=1}^{J}I_{\nu_j}. 
	\end{equation*}
	The tensorized difference operator is defined by 
	\begin{equation*}
		\Delta_\nu :=\otimes_{j=1}^{J}\Delta_{\nu_j}.
	\end{equation*}
	The sparse grid interpolant operator at the points $z_\nu$ with $|\nu|_1:=\sum_{j=1}^J \nu_j\le n$ is given by
	\begin{equation}\label{sparse_interp}
		%	I_{S_n}:= 
		\sum_{|\nu|_{1}\le n} \Delta_{\nu}. 
	\end{equation}
	
	\subsection{Interpolation on lower sets}
	We can extend the concept of sparse interpolation to general lower sets.
	% the sparse grids $\{|\nu|_1\le n \}$ to more general index sets. %The expression \eqref{sparse_interp} is valid if 
	%Sparse interpolation can be defined on index sets $\Lambda$  which are `lower set'. 
	%We define lower sets as below.  
	 We first provide the definition of lower sets.
	On $\mathbb{N}_0^J$, we consider the following partial ordering.  For $\nu=(\nu_j)_{j=1}^J$ and $\nu'=(\nu'_j)_{j=1}^J \in \mathbb{N}_0^J$, we denote by
	\begin{equation*}
		\nu' \prec \nu \quad \text{if } \nu'_j\le \nu_j\; \forall j=1,\dots,J. 
	\end{equation*}
	
	\begin{definition}
		An index set $\Lambda\subset \mathbb{N}_0^J$ is called a lower set if one of the following equivalent conditions is satisfied: 
		\begin{enumerate}[i)]
			\item ($\nu\in \Lambda$ and $\nu'\prec \nu$) implies $\nu'\in \Lambda$. 
			\item ($\nu\in \Lambda$ and $\nu_j\ge 1$) implies $\nu-e_j\in \Lambda$ where $e_j$ is the $j$th unit vector in the standard basis of ${\mathbb R}^J$. 
		\end{enumerate}
		
	\end{definition}
	
	Interpolation on lower sets has been extensively studied (see, e.g., \cite{Chkifa2013,Dyn2014,Sauer2003,Werner1980}). We define the interpolation operator on a lower set $\Lambda$ as 
	\begin{equation} \label{sparse_lower}
		I_\Lambda : = \sum_{\nu \in \Lambda} \Delta_\nu. 
	\end{equation}
	The interpolant $I_\Lambda$ is the unique polynomial in the space 
	\begin{equation*}
		P_\Lambda:=\operatorname{span}\{\by^\nu:\nu\in \Lambda\}
	\end{equation*}
	that agrees with $g$ at the points $\bz_\nu$ for all $\nu\in \Lambda$ (see, e.g., \cite{Chkifa2013}).

	\subsection{Newton-type formula} 
	To compute the interpolant $I_{\Lambda}$ for a lower set $\Lambda$, we express \eqref{sparse_lower} more explicitly. We start with the univariate difference operator $\Delta_k=I_k-I_{k-1}$. For a function $g\in C([-1,1])$, since $\Delta_kg$ is a polynomial of degree $k$ and   $\Delta_kg(z_j)=0$ for $j=0,\dots,k-1$, we have
	\begin{equation*}
		\Delta_kg(z) =c_k(g) (z-z_0)\dots (z-z_{k-1}), 
	\end{equation*} 
	where $c_k(g)$ is a constant that depends on $g$. Thus, in the univariate setting, \eqref{sparse_lower} coincides with the well-known Newton's formula for Lagrange interpolation: 
	\begin{equation}\label{newton_1d}
		I_ng(z) = c_0(g)+c_1(g)(z-z_0)+\dots+c_n(g)(z-z_0)\dots (z-z_{n-1}). 
	\end{equation} 
	Newton's formula allows for efficient computation of the interpolation polynomial when new interpolating points are added sequentially.  
	
	Next, we determine the coefficients in expansion \eqref{newton_1d}. For each $k\ge 0$, let $h_k$ be the polynomial of degree $k$ such that $h_k(z_k)=1$ and $h_k(z_j)=0$ for $j=1,\dots,k-1$, that is, 
	\begin{equation*}
		h_k(z):= \prod_{j=0}^{k-1}\frac{z-z_j}{z_k-z_j},
	\end{equation*}
	with the convention $h_0(z):=1$. Note that $\Delta_k$ is a multiple of $h_k$ and $\Delta_kg(z_k)=g(z_k)-I_{k-1}g(z_k)$. Hence 
	\begin{equation*}
		\Delta_kg(z) = \alpha_k(g)h_k(z),
	\end{equation*}
	where 
	\begin{equation*}
		\alpha_k(g) := g(z_k)-I_{k-1}g(z_k). 
	\end{equation*}
	We can rewrite \eqref{newton_1d} as 
	\begin{equation*}
		I_ng(z) = \sum_{k=0}^{n} \alpha_k(g) h_k(z).  
	\end{equation*}
	In the literature, the weights $\alpha_k$ are referred to as `hierarchical surplus' \cite{Klimke2005}. 
	
	The Newton formula can be generalized to the multivariate setting by using the tensorized hierarchical polynomial 
	\begin{equation*}
		H_\nu = \otimes_{j=1}^{J} h_{\nu_j}. 
	\end{equation*}
	Let $(\Lambda_n)_{n\ge 1}$ be a nested sequence of lower sets, with $\Lambda_n = \{\nu^1,\dots,\nu^n\}$. Similar to the univariate case, for $g\in C([-1,1]^J)$, by tensorization of $H$ and $\Delta$, we have 
	\begin{equation*}
		\Delta_{\nu^n}(g) = \alpha_{\nu^n}(g)H_{\nu^n}, \quad \alpha_{\nu^n}(g):=g(z_{\nu^n})-I_{\Lambda_{n-1}}g(z_{\nu^n}).
	\end{equation*}
	Hence, from \eqref{sparse_lower},
	\begin{equation}\label{Newton_mult}
		I_{\Lambda_n}g = \sum_{k=1}^{n}\alpha_{\nu^k}(g)H_{\nu^k}.
	\end{equation}
	Thus, we can recursively update new weights and new interpolants as follows:
	\begin{equation}
		\begin{split}
			\alpha_{\nu^n}(g)  &=  g(z_{\nu^n}) - \sum_{k=0}^{n-1}\alpha_{\nu^k}(g) H_{\nu^k}(z_{\nu^n}), \\ 
			I_{\Lambda_n}g &=  I_{\Lambda_{n-1}}g + \alpha_{\nu^n}(g) H_{\nu^n}. 
		\end{split}
	\end{equation}
	Our aim is to approximate the finite element approximation ${\mathcal G}^{J,l}$ by its interpolation over a family of lower sets $\Lambda_N$, $N\in{\mathbb N}$ where the cardinality of $\Lambda_N$ is $N$. Ideally, we wish to obtain an explicit error estimate for this approximation as a negative power of $N$. We give in subsection \ref{sec:explicit_rate} an example of a choice of the interpolation points $z_0, z_1, \ldots$ where there exists such a family of lower sets with an explicit rate of convergence.  
	%Theoretically, in particular situations such as the solution of the parametric EiT problem considered in this paper, we can prove the existence of a nested sequence of lower sets $\{\Lambda_n\}_n$ so that the cardinality of $\Lambda_n$ is $n$, and the convergence rate of $I_{\Lambda_n}g$ to $g$ is of a negative power of $n$ (see {\todo refer to the particular parts where we show this, e.g. the appendix}. 
	However, in practice, an explicit constructive algorithm to find such a sequence of nested lower sets with an explicit error estimate in terms of their cardinalities, is generally not available. We present in Subsection \ref{sec:adaptiveint} an adaptive algorithm to construct such a nested sequence of lower sets (\cite{Chkifa2013,Gerstner2003})). This algorithm provides an unambiguous procedure to construct these sets. However, establishing the convergence rate for the sparse adaptive interpolation operator is in general still an open question. 
	
	\subsection{Lower sets with explicit convergence rates}\label{sec:explicit_rate}
	A classical concept in the analysis of interpolation error is that of Lebesgue constant. The Lebesgue constant $\lambda_n$ is defined by the norm of the linear operator $I_n$, defined in \eqref{Lagrange_1d}, from $C[-1,1]$ into itself. In other words,
	\begin{equation*}
		\lambda_n:= \sup_{g\in C[-1,1],g\neq 0} \frac{||I_ng||_\infty}{||g||_\infty}.
	\end{equation*} 
	We assume:
	\begin{assumption}\label{lebesgue_const}
		The univariate sequence $(z_n)_{n\ge 0}$ are chosen so that
		\begin{equation*}
			\lambda_n\le (n+1)^\theta, n\ge 0\;  \text{ for some }\theta\ge 1.
		\end{equation*}
	\end{assumption}
	
	%\begin{remark}\label{remark:Leja}
	%\end{remark}
	\begin{example}
		The Leja sequence on the complex unit disk $\mathcal{D}:=\{\xi\in \mathbb{C}: |\xi|\le 1\}$ is defined by
		\begin{equation*}
			l_n:=\argmax_{\xi\in \mathcal{D}}\prod_{j=1}^{n-1}|\xi-l_j|.
		\end{equation*}
		The projection of $(l_n)_{n\ge 0}$ onto $[-1,1]$ are called the $R$-Leja sequence on $[-1,1]$. For $R$-Leja sequence, it is shown in \cite{Chkifa2015P} that $\lambda_n =O((n+1)^2)$. 
	\end{example}
	\begin{proposition}\label{best_N_existence}
		Under Assumptions \ref{lebesgue_const} and \ref{lp-summability_assumption}, there exists a nested sequence of lower sets $(\Lambda_N)_{N\ge 1}$ such that $\#(\Lambda_N)=N$ and  
		\begin{equation*}
			\sup_{\by\in U_J}|\mathcal{G}^{J,l}(\by)-I_{\Lambda_N}\mathcal{G}^{J,l}(\by)|_\Sigma\le CN^{-s},
		\end{equation*}
		where $C$ is a constant independent of $J,l,N$. 
	\end{proposition}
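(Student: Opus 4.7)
The plan is to show that the map $\by\mapsto\mathcal{G}^{J,l}(\by)$ satisfies the holomorphy and summability conditions under which Chkifa, Cohen, and Schwab \cite{Chkifa2013} construct a nested sequence of lower sets with an explicit algebraic convergence rate, and then to verify that all constants can be taken independent of $J$ and $l$. The forward map is not the solution of a standard elliptic equation, but the variational problem \eqref{variational} is of exactly the type treated in that reference: the solution depends on $\sigma$ through a symmetric coercive bilinear form whose coefficients depend affinely on $\by$.

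First, I would extend $\by\mapsto\sigma^J(\cdot,\by)$ to complex parameters. Pick positive radii $(\rho_j)_{j\ge 1}$ satisfying
\[
\sum_{j\ge 1}\rho_j\|\psi_j\|_{L^{\infty}(D)}\le \tfrac12\min\{\bar\sigma(x)-\sigma^-,\ \sigma^+-\bar\sigma(x)\},
\]
so that for every $\mathbf{z}\in\mathbb{C}^J$ with $|z_j-y_j|\le \rho_j$ the complex conductivity $\sigma^J(\cdot,\mathbf{z})$ has real part uniformly bounded below by $\sigma^-/2$. By Assumption \ref{lp-summability_assumption}, $\|\psi_j\|_{L^{\infty}}\le C j^{-(1+s)}$, so one may take $\rho_j\asymp j^{s'}$ for any $s'<s$ while preserving the above bound; the sequence $(\rho_j^{-1})$ then belongs to $\ell^{p}$ for any $p>1/(1+s')$. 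On this polytube the complexified form $B(\cdot,\cdot;\sigma^J(\mathbf{z}))$ remains bounded and sectorially coercive on $\mathcal{H}^1$ (and hence on the FE subspace $\mathcal{V}^l$), so a complex Lax--Milgram argument yields a unique complex solution depending holomorphically on $\mathbf{z}$. Consequently $\mathcal{G}^{J,l}$ admits a holomorphic extension that is bounded uniformly in $J$, $l$ and $\mathbf{z}$, with bound depending only on $\sigma^-,\zeta^-,\zeta^+,I^{(k)}$.

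Second, Cauchy's formula applied on this polytube gives coefficient bounds of the form $|t_\nu|_\Sigma\le C\prod_j\rho_j^{-\nu_j}$ for the Taylor expansion $\mathcal{G}^{J,l}(\by)=\sum_{\nu}t_\nu \by^\nu$. From this starting point, the construction in \cite{Chkifa2013} produces a nested sequence of lower sets $(\Lambda_N)_{N\ge 1}$ in $\mathbb{N}_0^{J}$ with $\#\Lambda_N=N$ for which the best lower-set approximation error decays as $N^{-s}$. Invoking Assumption \ref{lebesgue_const}, which forces the Lebesgue constant of the tensorised interpolant $I_{\Lambda_N}$ to grow only polynomially in $N$, we then use the standard comparison between interpolation error and best approximation error (applied in the $\ell^\infty$ sense on $U_J$) to conclude
\[
\sup_{\by\in U_J}\bigl|\mathcal{G}^{J,l}(\by)-I_{\Lambda_N}\mathcal{G}^{J,l}(\by)\bigr|_\Sigma\le C\,N^{-s}.
\]

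The main obstacle is ensuring that the constant $C$ is truly uniform in $J$ and $l$. Uniformity in $J$ is essentially automatic, since truncating the expansion only suppresses a tail of terms that the radii $(\rho_j)$ were chosen to absorb, so the polytube of holomorphy is inherited from the infinite-dimensional case. Uniformity in $l$ requires that the complex FE problem has a stability estimate depending only on the same ellipticity constants $\sigma^-,\zeta^-$ as in \eqref{H1_norm_bound}: conformity of $\mathcal{V}^l\subset\mathcal{H}^1$ makes the coercivity and continuity estimates for the complexified form inherit from the continuous ones, so the Cauchy coefficient bounds are the same at every FE level. Once this uniformity is in place, the lower-set construction of \cite{Chkifa2013} can be applied directly to $\mathcal{G}^{J,l}$ and yields the stated estimate.
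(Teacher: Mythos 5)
Your overall route — complexify the parameter, establish holomorphy and a uniform bound for $\mathcal{G}^{J,l}$ on an admissible polydisc, extract Taylor coefficient bounds from Cauchy's formula, and then invoke the lower-set machinery of Chkifa et al. — is the same as the paper's (Lemmas \ref{unconditional_Taylor} and \ref{t_nu_bound}, followed by the argument around \eqref{relate_interp_taylor}), and your remarks on uniformity in $J$ and $l$ via conformity of $\mathcal{V}^l$ are correct. However, your final step has a genuine gap: you convert the best lower-set approximation error into an interpolation error by multiplying by the Lebesgue constant of $I_{\Lambda_N}$. Under Assumption \ref{lebesgue_const} the operator norm of $I_{\Lambda_N}$ on $C(U_J)$ is only controlled by $(\#\Lambda_N)^{\theta+1}=N^{\theta+1}$, so the ``standard comparison'' yields at best
\begin{equation*}
\sup_{\by\in U_J}\bigl|\mathcal{G}^{J,l}(\by)-I_{\Lambda_N}\mathcal{G}^{J,l}(\by)\bigr|\le \bigl(1+N^{\theta+1}\bigr)\,C N^{-s}=O\bigl(N^{\theta+1-s}\bigr),
\end{equation*}
which is strictly weaker than the claimed $N^{-s}$ and vacuous unless $s>\theta+1$. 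The paper avoids this loss by using Lemma 4.2 of Chkifa et al., restated as \eqref{relate_interp_taylor}: the interpolation error is bounded by $2\sum_{\nu\notin\Lambda}p_{\nu,J}(\theta)\,|t_{\nu,J,l}|$ with $p_{\nu,J}(\theta)=\prod_{j}(1+\nu_j)^{\theta+1}$, so the Lebesgue-constant growth enters as a $\nu$-dependent weight \emph{inside} the tail sum rather than as a global factor $N^{\theta+1}$ multiplying the final bound. One then shows that the weighted sequence $\bigl(p_{\nu,J}(\theta)|t_{\nu,J,l}|\bigr)_\nu$ admits a monotone $\ell^p$-summable majorant and applies Stechkin's lemma; this is what simultaneously produces the rate $N^{-s}$ and the lower-set structure of $\Lambda_N$ (the set of $N$ largest terms of a monotone majorant is automatically lower).

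A secondary issue is your choice of \emph{fixed} radii $\rho_j\asymp j^{s'}$ with $s'<s$. The admissibility constraint \eqref{admissible_rho} then forces $(\rho_j^{-1})\in\ell^p$ only for $p>1/s'$, and the resulting Stechkin rate $1/p-1$ falls short of $s$ by roughly one unit. To recover the full rate one must take $\nu$-dependent radii, as in Theorem 4.3 of Chkifa et al. (and in the paper's argument following \eqref{relate_interp_taylor}): for each multi-index $\nu$ one enlarges $\rho_j$ precisely in the coordinates where $\nu_j$ is large, subject to \eqref{admissible_rho}, and the resulting family of Cauchy bounds is what yields a majorant in $\ell^p$ for every $p>1/(1+s)$. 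Both repairs are needed to obtain the statement as claimed.
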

	We prove this in Appendix \ref{section:error_estimate}.
	Proposition \ref{best_N_existence} establishes the existence of a nested sequence of lower sets.  However, a constructive algorithm to find these sets is not available. In the next section, we present a greedy algorithm to find a sequence of lower sets. Though the convergence rate in terms of the cardinality of the sets is still an open question, the algorithm is unambiguous. 
	\subsection{Sparse adaptive interpolation}\label{sec:adaptiveint}
	Following \cite{Gerstner2003, Chkifa2013}, we describe a greedy algorithm to select the lower set $\Lambda$ adaptively. It is an iterative algorithm, where in each step we add a new index $\nu$ to the current index set $\Lambda$. Motivated by expansion \eqref{sparse_lower}, for each iteration, we aim to choose $\nu\notin \Lambda$ such that $||\Delta_{\nu}f||_\infty$ is largest. In this way, we hope that a large $L^\infty$-error reduction is achieved after each step. 
	
	However, to maintain that the updated index set is always a lower set after each iteration, we limit the search to the neighbour $\mathcal{N}(\Lambda)$ of the current lower set, where $\mathcal{N}(\Lambda)$ is defined as the set of indices $\nu\notin \Lambda$ so that $\Lambda \cup \{\nu\}$ is also a lower set. Equivalently, $\mathcal{N}(\Lambda)$ consists of those $\nu \notin \Lambda$ such that $\nu-e_j\in \Lambda$ for all $j$ such that $\nu_j\ge 1$.  We summarize the adaptive algorithm for interpolation on lower sets in Algorithm \ref{alg:adaptive_lower} (cf. \cite{Chkifa2013, Gerstner2003}).

	\begin{algorithm}
		\caption{Adaptive selection of the index set for interpolation}
		\label{alg:adaptive_lower}
		\begin{algorithmic}[1]
			\Function{Adaptive\_Interp}{$f,\boldsymbol{z},N$}       
			\State$\nu^1 := (0,\dots,0)$
			\State $\Lambda := \{\nu^1\}$
			\For{$n=2,\dots,N$} 
			\State $\nu^n:= \argmax_{\nu \in \mathcal{N}(\Lambda)} ||\Delta_\nu f||_{\infty}$
			\State $\Lambda := \Lambda \cup \{\nu^n\}$
			\EndFor
			\State \Return $\Lambda$
			\EndFunction
		\end{algorithmic}
	\end{algorithm}
	
	In this algorithm, since we add new interpolation nodes sequentially, Newton formula is suitable. For each $n$, let $\Lambda_n:=\{\nu^1,\dots,\nu^n\}$. Recall that 
	\begin{equation*}
		||\Delta_{\nu^n} f||_\infty =  |\alpha_{\nu^n}(f)|.||H_{\nu^n}||_\infty, 
	\end{equation*}
	and 
	\begin{equation*}
		\alpha_{\nu^n}(f)  =  f(z_{\nu^n}) - \sum_{k=0}^{n-1}\alpha_{\nu^k}(f) H_{\nu^k}(z_{\nu^n}).
	\end{equation*}

	\subsection{Approximation of the posterior by Lagrange interpolation of the forward map}
	Let $\Lambda_N\subset \mathbb{N}_0^J$ be a lower set of  cardinality $N$. We approximate the forward map $\mathcal{G}$ by the interpolant of $\mathcal{G}^{J,l}$, that is,  
	\begin{equation}\label{G_Nl_def}
		\mathcal{G}^{J,l,N}:=I_{\Lambda_N}\mathcal{G}^{J,l}. 
	\end{equation}
	Replacing $\mathcal{G}^{J,l}$ by $\mathcal{G}^{J,l,N}$ in \eqref{eq:PhiJl}, we get an approximation $\Phi^{J,l,N}$ of $\Phi^J$, defined by
	\begin{equation}
		\Phi^{J,l,N}(\by;\delta):=\frac{1}{2}|\delta-\mathcal{G}^{J,l,N}(\by)|_{\Sigma}^2. 
		\label{eq:PhiJlN}
	\end{equation}
	We then approximate the posterior $\mu^{J,l,\delta}$ by the measure  $\mu^{J,l,N,\delta}$ defined by 
	\begin{equation}
		\frac{d\mu^{J,l,N,\delta}}{d \mu_0^J}(\by)\propto \exp(-\Phi^{J,l,N}(\by;\delta))
		\label{eq:muJlN}
	\end{equation}
	\begin{comment}
		\begin{proposition}
			\todo{
				Assuming that the sequence of lower sets $\Lambda_N$ satisfies the convergence rate in Proposition \ref{best_N_existence}, we then have
				\[
				d_{Hell}(\mu^\delta,\mu^{J,l,N,\delta})\le c(J^{-s}+2^{-2l}+N^{-s}).
				\]
				And say very briefly how to prove it.
			}
		\end{proposition}
	\end{comment}
	
		\begin{proposition}\label{Hell_uniform_bound}
		Under Assumptions \ref{lp-summability_assumption}, \ref{assumption:sigma,zeta} and \ref{lebesgue_const}, there exists a nested sequence of lower sets $(\Lambda_N)_{N\ge 1}$ such that $\#(\Lambda_N)=N$ and  
		\begin{equation*}
			d_{Hell}(\mu^{J,l,N,\delta},\mu^{J,\delta})\le C(2^{-2l}+N^{-s}),
		\end{equation*}
		where $C$ is a constant independent of $J,l,N$. 
	\end{proposition}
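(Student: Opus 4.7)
The plan is to follow exactly the same path used to prove Proposition \ref{mu-muJ}, only with $\sigma$, $\mathcal{G}$ replaced by $\sigma^J$, $\mathcal{G}^{J,l,N}$ and with two error sources ($2^{-2l}$ from the FE step and $N^{-s}$ from the interpolation step) that combine additively. First, I would choose the nested family $(\Lambda_N)_{N\ge 1}$ provided by Proposition \ref{best_N_existence}, applied to the FE forward map $\mathcal{G}^{J,l}$, so that
\[
\sup_{\by\in U_J}|\mathcal{G}^{J,l}(\by)-\mathcal{G}^{J,l,N}(\by)|_{\Sigma}=\sup_{\by\in U_J}|\mathcal{G}^{J,l}(\by)-I_{\Lambda_N}\mathcal{G}^{J,l}(\by)|_{\Sigma}\le CN^{-s},
\]
with $C$ independent of $J,l,N$. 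Combining this with Proposition \ref{G-Gl} via the triangle inequality gives
\[
\sup_{\by\in U_J}|\mathcal{G}^{J}(\by)-\mathcal{G}^{J,l,N}(\by)|_{\Sigma}\le C(2^{-2l}+N^{-s}).
\]

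Next, I would transfer this forward-map bound into a bound on the misfits $\Phi^J$ and $\Phi^{J,l,N}$. Mimicking the computation in the proof of Proposition \ref{mu-muJ},
\[
|\Phi^{J}(\by;\delta)-\Phi^{J,l,N}(\by;\delta)|=\tfrac12\bigl|\langle \mathcal{G}^J(\by)-\mathcal{G}^{J,l,N}(\by),\,2\delta-\mathcal{G}^J(\by)-\mathcal{G}^{J,l,N}(\by)\rangle_\Sigma\bigr|.
\]
The first factor is controlled by $C(2^{-2l}+N^{-s})$ uniformly in $\by$. For the second factor I need uniform boundedness of $\mathcal{G}^{J,l,N}$: since $\mathcal{G}^{J,l}$ is uniformly bounded in $\by\in U_J$ and $J,l$ (this follows from \eqref{H1_norm_bound}, Lemma \ref{V<=(v,V)} and the same argument that gives \eqref{GJ_bounded}, noting that $\sigma^J$ satisfies the uniform ellipticity bound \eqref{sigma_J_bounded}), another triangle inequality yields
\[
\sup_{\by\in U_J}|\mathcal{G}^{J,l,N}(\by)|_\Sigma \le \sup_{\by\in U_J}|\mathcal{G}^{J,l}(\by)|_\Sigma + CN^{-s} \le C',
\]
uniformly in $J,l,N$. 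Consequently $\sup_{\by\in U_J}|\Phi^J(\by;\delta)-\Phi^{J,l,N}(\by;\delta)|\le C(2^{-2l}+N^{-s})$, and both misfits are uniformly bounded from above in $\by$ (by the same argument as in \eqref{Phi_bounded}).

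Finally, I would plug these two facts into the standard Hellinger-distance estimate of Theorem 2.4 in \cite{Hoang2012} (also used at the end of the proof of Proposition \ref{mu-muJ}): when two posteriors have densities proportional to $\exp(-\Phi_1)$ and $\exp(-\Phi_2)$ with respect to a common reference measure, with both misfits uniformly bounded, the Hellinger distance is bounded by a constant times $\sup_\by|\Phi_1(\by)-\Phi_2(\by)|$. Applying this to $\mu^{J,\delta}$ and $\mu^{J,l,N,\delta}$ (both naturally viewed as measures on $U_J$ with reference $\mu_0^J$) yields
\[
d_{Hell}(\mu^{J,l,N,\delta},\mu^{J,\delta})\le C(2^{-2l}+N^{-s}),
\]
as required.

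I do not expect any real obstacle here: the work is essentially bookkeeping, combining Propositions \ref{G-Gl} and \ref{best_N_existence} with the Hellinger-distance machinery already invoked earlier in the paper. The only mildly non-trivial point is verifying that $\mathcal{G}^{J,l,N}$ inherits a uniform sup-norm bound from $\mathcal{G}^{J,l}$, which as noted above follows immediately from the interpolation error estimate together with the uniform boundedness of $\mathcal{G}^{J,l}$, so no direct control of the Lebesgue constant of $I_{\Lambda_N}$ is needed.
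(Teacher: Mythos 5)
Your proposal is correct and follows essentially the same route as the paper: combine Propositions \ref{G-Gl} and \ref{best_N_existence} via the triangle inequality to bound $\sup_{\by}|\mathcal{G}^{J}(\by)-\mathcal{G}^{J,l,N}(\by)|_\Sigma$, deduce uniform boundedness of $\mathcal{G}^{J,l,N}$, and then repeat the Hellinger-distance argument from the proof of Proposition \ref{mu-muJ}. The paper compresses this into "mimicking the proof of Proposition \ref{mu-muJ}", and your write-up is simply that mimicking carried out explicitly.
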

	\begin{proof}
		Combining Propositions \ref{G-Gl} and \ref{best_N_existence}, by the triangle inequality, 
		\begin{equation}\label{G-G_Nl}
			\sup_{\by\in U_J}|\mathcal{G}^{J}(\by)-\mathcal{G}^{J,l,N}(\by)|_\Sigma\le C(2^{-2l}+N^{-s}). 
		\end{equation}
		
		Recall from \eqref{GJ_bounded} that $|\mathcal{G}^J(\by)|$ is uniformly bounded for every $\by\in U_J, J\in \mathbb{N}$. Together with \eqref{G-G_Nl}, we deduce that $|\mathcal{G}^{J,N,l}(\by)|$ is uniformly bounded for every $\by\in U_J$ and $J,l,N\in \mathbb{N}$. Mimicking the proof of Proposition \ref{mu-muJ}, we get the desired estimate. 
	\end{proof} 
	\section{Sparse interpolation Metropolis-Hastings MCMC}\label{sect:MH_MCMC}
	We perform MCMC to sample the approximating posterior measure $\mu^{J,l,N,\delta}$ obtained from sparse Lagrange interpolation. The advantage of using Lagrange interpolation is that the approximations for the solution of the forward equation can be obtained before running the MCMC process, avoiding the expensive process of solving a realization of the forward equation with high accuracy at each sample. 
	For an overview of Metropolis-Hastings methods, we refer to \cite{Roberts2004, Meyn2009, Dashti2017} and the references therein.  
	
	We sample the posterior measure $\mu^{J,l,N,\delta}$ %(which, in the description of the MCMC method below, we denote it simply by $\mu$ for conciseness) 
defined in \eqref{eq:muJlN} in the measurable space $(U_J:=[-1,1]^{J}, \otimes_{j=1}^J{\mathcal B}[-1,1])$ with the prior  measure $\mu_0^J$.  
	%We sample the posterior measure $\mu^J$ of the form
	%\begin{equation*}
	%	\mu(dy)\propto \exp(-\Phi(y)) \mu_0(dy). 
	%\end{equation*}
	%In this study, $Y$ can be $[-1,1]^{\mathbb{N}}$ or $[-1,1]^J$ for some $J\in \mathbb{N}$; $\mu_0, \Phi, \mu$ can be the corresponding prior, misfit, posterior. 
	Starting with a proposal kernel $Q^J({\by},d{\bs})$, Metropolis-Hastings methods construct a Markov chain reversible with respect to the target measure $\mu$. Let 
	\begin{equation*}
		\eta(d\by,d\bs):=\mu^{J,l,N,\delta}(d\by)\otimes Q^J(\by,d\bs), \quad  \eta^\perp (d\by,d\bs):=\mu^{J,l,N,\delta}(d\bs)\otimes Q^J(\bs,d\by). 
	\end{equation*} 
	Provided that  $\eta$ and $\eta^\perp$ are equivalent measures, we define the acceptance probability as
	\begin{equation*}
		\alpha(\by,\bs) := \min\{1, \frac{d\eta^\perp}{d\eta}(\by,\bs)\}. 
	\end{equation*}
	We then construct a Markov Chain $(\by^{(n)})_{n\ge 1}$ as follows
	\begin{itemize}
		\item initialize $\by^{(0)}$; 
		\item for $k=1,2,\ldots$, propose a candidate $\bs^{(k)} \sim Q^J(\by^{(k)},d\bs)$; 
		\item set $\by^{(k+1)}=\bs^{(k)}$ with probability $\alpha(\by^{(k)},\bs^{(k)})$ and $\by^{(k+1)}=\by^{(k)}$ with probability $1-\alpha(\by^{(k)},\bs^{(k)})$. 
	\end{itemize}

	\begin{comment}
			In other words, the transition kernel for the Markov chain $(\by^{(n)})_{n\ge 1}$ is 
		\begin{equation*}
			K(\by,d\bs) := \alpha(\by,\bs)Q(\by,d\bs) + \delta_{\by}(d\bs) \left(1 - \int_U \alpha(\by,\bs)Q(\by,d\bs)\right).
		\end{equation*}
		The Markov Chain $(\by_n)_{n\ge 1}$ is then reversible with respect to $\mu$, that is, 
		\begin{equation*}
			\mu(d\by)K(\by,d\bs)=\mu(d\bs)K(\bs,d\by). 
		\end{equation*} 
	\end{comment}

	%Next, we discuss the choice of the proposal kernel $Q$. We are interested in algorithms such that the acceptance probability is well-defined in the case where there are infinite parameters, i.e., $\eta$ and $\eta^\perp$ are equivalent measures in the case $Y=[-1,1]^{\mathbb{N}}$. In practice, this condition means that when we run MCMC on the truncated parameter space $[-1,1]^J$, the acceptance probability will not decrease to $0$ when we increase the number $J$ of parameters. This is in contrast with the behaviour of the standard Random Walk Metropolis algorithm. 
	
	A sufficient condition to guarantee that $\eta$ and $\eta^\perp$ are equivalent measures is that the proposal kernel $Q^J$ is reversible with respect to the reference measure $\mu_0^J$ (Theorem 22 in \cite{Dashti2017}) 
	\begin{equation}\label{proposal_reversible}
		\mu_0^J(d\by)Q^J(\by,d\bs)=\mu_0^J(d\bs)Q^J(\bs,d\by). 
	\end{equation}
	Moreover, when \eqref{proposal_reversible} holds, the acceptance probability is
	\begin{equation}\label{eq:alphaJlN}
		\alpha^{J,l,N}(\by,\bs) = \min\{1, \exp(\Phi^{J,l,N,\delta}(\by)-\Phi^{J,l,N,\delta}(\bs)) \}. 
	\end{equation}
	 We approximate $E^{\mu^\delta}[\sigma]$ by 
		\begin{equation*}
			E_M^{\mu^{J,l,N,\delta}}[\sigma^J]:= \frac{1}{M}\sum_{i=1}^{M}\sigma^J(\by^{(i)}), 
		\end{equation*}
	where $(\by^{(i)})_{i=1}^{\infty}$ is the Markov chain obtained from the MCMC process.
	
	We give some examples of  proposal distributions that satisfy \eqref{proposal_reversible}. 
	The simplest proposal kernel is that of the independence sampler  method %\cite{Hoang2013}: 
	\begin{equation*}
		Q_{IS}^J(\by,d\bs):=\mu_0^J(d\bs). 
	\end{equation*}
	In practice, the IS algorithm often leads to slow mixing because the proposal does not take advantage of the previous states of the chain. Reflection random walk Metropolis (RRWM) \cite{Vollmer2015} is a local Metropolis-Hastings method which is a modification of the standard Random Walk Metropolis. On the one-dimensional probability space $[-1,1]$, the proposal $q_{RRWM}(y,ds)$ is defined as the law of the random variable $R(y+\beta \xi)$, where $\beta\in (0,1)$, $\xi\sim Uniform[-1,1]$, and $R$ is the reflection via the endpoints $\{-1,1\}$: 
	\begin{equation*}
		R(y):= \begin{cases}
			-2-y\quad \text{if } y<-1\\
			y \quad \text{if } y\in [-1,1]\\
			2-y \quad \text{if } y>1. 
		\end{cases}
	\end{equation*} 
	For $U_J=[-1,1]^J$, the proposal kernel is defined as the tensorization of the one-dimensional proposal kernel 
	\begin{equation*}
		Q_{RRWM}^{J}(\by,d\bs):= \otimes_{j= 1}^{J} q_{RRWM}(y_j,ds_j)
	\end{equation*}
	for $\by=(y_1,\ldots,y_J)\in U_J$ and $\bs=(s_1,\ldots,s_J)\in U_J$.
	Another method is described in \cite{Chen2019}, where the state space $[-1,1]^J$ is mapped into the new state space $\mathbb{R}^J$ by the inverse of the standard normal cumulative distribution function and the preconditioned Crank Nicolson MCMC method \cite{Cotter2013} is applied to $\mathbb{R}^J$.

	 %{\todo Define $E_M^{\mu^{J,l,M,\delta}}$ here. What do you mean?}

	 We have the following result for the independence sampler and the reflection random walk Metropolis. By $\mathcal{E}^{{\mu_0^J},J,l,N}$, we denote the expectation taken with respect to the joint distribution of the Markov chain $(\by^{(i)})_{i=1}^{\infty}$ with the acceptance probability $\alpha^{J,l,N}$ in \eqref{eq:alphaJlN}, and the initial sample being distributed according to $\mu_0^J$.  We prove this in Appendix \ref{section:error_estimate}. 
	%We use the following quantity to measure the error in the approximation of the posterior mean $E^{\mu^\delta}[\sigma(y)]$:
	\begin{theorem} \label{error_total_J}
		%\todo{make necessary assumptions here} 
		Under Assumptions \ref{lp-summability_assumption}, \ref{assumption:sigma,zeta}, \ref{lebesgue_const}, there exists a nested sequence of lower sets $(\Lambda_N)_{N\ge 1}$ such that $\#(\Lambda_N)=N$ and  
		\begin{equation*}
			\varepsilon_{J,l,N,M}:=\left(\mathcal{E}^{\mu_0^J,J,l,N}\left[\left\Vert E^{\mu^{\delta}}[\sigma]- E_M^{\mu^{J,l,N,\delta}}[\sigma^J]\right\Vert_{L^\infty(D)}^2\right]\right)^{1/2}\le c(J^{-s}+2^{-2l}+M^{-1/2}+N^{-s})
		\end{equation*}
	\end{theorem}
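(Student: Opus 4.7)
The plan is to decompose the total error via triangle inequality into four contributions corresponding to the four sources of approximation. Inserting the intermediate expectations $E^{\mu^\delta}[\sigma^J]$, $E^{\mu^{J,\delta}}[\sigma^J]$ and $E^{\mu^{J,l,N,\delta}}[\sigma^J]$, I shall bound $\|E^{\mu^\delta}[\sigma] - E_M^{\mu^{J,l,N,\delta}}[\sigma^J]\|_{L^\infty(D)}$ by a sum of four terms: (i) $\|E^{\mu^\delta}[\sigma-\sigma^J]\|_{L^\infty(D)}$, which is bounded by $\sup_{\by}\|\sigma(\cdot,\by)-\sigma^J(\cdot,\by)\|_{L^\infty(D)} \le CJ^{-s}$ directly from \eqref{sigma-sigmaJ}; (ii) $\|E^{\mu^\delta}[\sigma^J]-E^{\mu^{J,\delta}}[\sigma^J]\|_{L^\infty(D)}$; (iii) $\|E^{\mu^{J,\delta}}[\sigma^J]-E^{\mu^{J,l,N,\delta}}[\sigma^J]\|_{L^\infty(D)}$; and (iv) the sampling error $\|E^{\mu^{J,l,N,\delta}}[\sigma^J]-E_M^{\mu^{J,l,N,\delta}}[\sigma^J]\|_{L^\infty(D)}$ measured in quadratic mean under $\mathcal{E}^{\mu_0^J,J,l,N}$.

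For the Hellinger-type terms (ii) and (iii), I will use that $\sigma^J(\cdot,\by)$ is uniformly bounded in $L^\infty(D)$ by $\sigma^+$ for all $\by$ (see \eqref{sigma_J_bounded}), combined with the standard estimate which bounds $|E^{\mu}[F]-E^{\mu'}[F]|$ for a uniformly bounded $F$ by $C\|F\|_\infty d_{Hell}(\mu,\mu')$. Applying this pointwise in $x\in D$ to $F(\by)=\sigma^J(x,\by)$ and taking the supremum over $x\in D$ (which remains controlled since $\sigma^+$ is independent of $x$), Proposition \ref{mu-muJ} yields a bound of order $J^{-s}$ for term (ii), while Proposition \ref{Hell_uniform_bound} provides $C(2^{-2l}+N^{-s})$ for term (iii).

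For the Monte Carlo term (iv), I will rely on standard $L^2$ convergence rates for ergodic Markov chains sampled via the independence sampler or RRWM, applied to the uniformly bounded observable $\sigma^J(x,\cdot)$. The key observation is that, by \eqref{GJ_bounded} together with the bound on $|\mathcal{G}^{J,l,N}|$ deduced in the proof of Proposition \ref{Hell_uniform_bound}, $\mathcal{G}^{J,l,N}$ is uniformly bounded in $(\by,J,l,N)$, so $\Phi^{J,l,N}$ is bounded and hence the Radon-Nikodym derivative $d\mu^{J,l,N,\delta}/d\mu_0^J \propto \exp(-\Phi^{J,l,N})$ is bounded above and below away from zero, uniformly in $(J,l,N)$. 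For the independence sampler, this yields a Doeblin-type uniform minorization with rate independent of $(J,l,N)$, giving a uniform spectral gap and the $M^{-1/2}$ rate for bounded observables; for RRWM the dimension-robust spectral-gap estimates of \cite{Vollmer2015} provide the same $M^{-1/2}$ rate with a constant independent of $J$. Initialising the chain at $\mu_0^J$ and invoking the uniform boundedness of $d\mu^{J,l,N,\delta}/d\mu_0^J$ absorbs the initial-distribution bias into the constant.

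The main obstacle is confirming that the MCMC convergence rate is genuinely uniform in $(J,l,N)$; once this is established the four contributions combine to yield the advertised estimate $c(J^{-s}+2^{-2l}+M^{-1/2}+N^{-s})$. The two supporting lemmas that do the heavy lifting, namely the Hellinger-to-$L^\infty(D)$ expectation conversion and the quantitative ergodicity bounds for IS and RRWM with constants independent of the truncation level and surrogate accuracy, are standard but technical, and I would relegate their detailed verification to the appendix, as indicated by the authors.
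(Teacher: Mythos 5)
Your overall strategy coincides with the paper's: a triangle-inequality decomposition into truncation, Hellinger (via Lemma 21 of \cite{Dashti2017}), interpolation/FE, and Monte Carlo contributions, with the uniform boundedness of $\Phi^{J,l,N}$ used both for the Hellinger estimates and for changing the chain's initial distribution from the posterior to the prior. The order in which you insert the intermediate expectations differs immaterially from the paper, which first passes from $\mu^\delta$ to $\mu^{J,\delta}$ and then replaces $\sigma$ by $\sigma^J$ using \eqref{sigma-sigmaJ}; both routes give the two $J^{-s}$ contributions.

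There is, however, one concrete gap in your treatment of the Monte Carlo term. You propose to apply the scalar $L^2$ ergodicity bound to the observable $\sigma^J(x,\cdot)$ for each fixed $x$; this yields $\sup_{x\in D}\mathcal{E}\bigl[|E^{\mu^{J,l,N,\delta}}[\sigma^J(x,\cdot)]-E_M^{\mu^{J,l,N,\delta}}[\sigma^J(x,\cdot)]|^2\bigr]\le CM^{-1}$, whereas the theorem requires the supremum over $x$ \emph{inside} the expectation, i.e.\ a bound on $\mathcal{E}\bigl[\Vert\cdot\Vert_{L^\infty(D)}^2\bigr]$, and the supremum and the expectation do not commute in the needed direction. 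The paper circumvents this by exploiting the affine structure of $\sigma^J$ in $\by$: the sampling error equals $\sum_{j=1}^J\psi_j(x)\bigl(E^{\mu^{J,l,N,\delta}}[y_j]-E_M^{\mu^{J,l,N,\delta}}[y_j]\bigr)$, so its $L^\infty(D)$ norm is dominated by $\sum_{j}\Vert\psi_j\Vert_{L^\infty(D)}\,\bigl|E^{\mu^{J,l,N,\delta}}[y_j]-E_M^{\mu^{J,l,N,\delta}}[y_j]\bigr|$; applying the scalar bound \eqref{MSE_Rudolf} to each $g=y_j$, the triangle inequality in $L^2$ of the chain's law, and the summability $\sum_j\Vert\psi_j\Vert_{L^\infty(D)}<\infty$ from Assumption \ref{lp-summability_assumption} then gives the $M^{-1/2}$ rate uniformly in $J,l,N$. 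Once you incorporate this (or some other device to pass the supremum through the expectation), your argument matches the paper's; your remaining ingredients for the MCMC rate are exactly the paper's Assumption \ref{spectral_gap_assumption} and Lemma \ref{spectral_gap_bound} (via Theorem 3.3 of \cite{Vollmer2015} and Corollary 3.27 of \cite{Rudolf2012}), which you correctly note are verified for the independence sampler and RRWM.
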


	To get the rate of convergence in this theorem, we assume the interpolation convergence rate $N^{-s}$ for the sequence of lower sets. Practically, a constructive algorithm to identify such a sequence of lower sets with an explicit interpolation convergence rate has not been available. 
	In all the numerical experiments in this paper, we will use Algorithm \ref{alg:SAI-MCMC}, where the index set $\Lambda$ for interpolation is chosen adaptively by Algorithm \ref{alg:adaptive_lower}. From Algorithm \ref{alg:SAI-MCMC}, we see that for each MCMC iteration, we evaluate a polynomial instead of solving a forward PDE. 
	
	\begin{algorithm}
		\caption{Sparse adaptive interpolation-MCMC}
		\label{alg:SAI-MCMC}
		\begin{algorithmic}[1]
			\Require $\boldsymbol{z},N,\mathcal{G}^{J,l},\delta,u^{(1)},M,n_B$
			\State $[\Lambda,\alpha] := \text{ADAPTIVE\_INTERP}(\mathcal{G}^{J,l},\boldsymbol{z},N)$
			\Comment{Offline}
			\For{$i=1,\dots,M-1$}
			\State Propose $v^{(i)}\sim Q(u^{(i)},dv)$
			\State Evaluate $\mathcal{G}^{J,l,N}(v^{(i)})=\sum_{n=1}^{N}\alpha_n H_{\nu^n}(v^{(i)})$
			\State Calculate $\alpha^{J,l,N}(u^{(i)},v^{(i)})$ according to \eqref{eq:alphaJlN}
			\State Draw $rand\sim Uniform[0,1]$.
			\If{$\alpha^{J,l,N}(u^{(i)},v^{(i)})<rand$} 
			\State 	$u^{(i+1)}:=v^{(i)}$
			\Else 
			\State 	$u^{(i+1)}:=u^{(i)}$
			\EndIf
			\EndFor
			%		\State Burn-in: discard $u^{(1)},\dots,u^{(n_B)}$
			%		\State Compute the Monte Carlo approximation 
			%		\begin{equation*}
				%			y_{mean}:=\frac{1}{M-n_B}\sum_{i=n_B+1}^{M}u^{(i)} 
				%		\end{equation*} 
			%		\State Compute the approximation for the posterior mean of $\sigma$
			%		\begin{equation*}
				%			E^{\mu^{N,l,\delta}}[\sigma]\approx \sigma(\cdot,y_{mean}). 
				%		\end{equation*}
		\end{algorithmic}
	\end{algorithm}
	
	The sparse adaptive interpolation MCMC  algorithm consists of two stages. In the offline stage (pre-processing phase), we find the polynomial $\mathcal{G}^{J,l,N}$ that approximates the forward map $\mathcal{G}$. In the online stage (post-processing phase), we run MCMC iterations using evaluations of $\mathcal{G}^{J,l,N}$. 
	
	As in other surrogate methods for Bayesian inverse problems, an advantage of sparse interpolation-MCMC is that it only requires polynomial evaluations in the online phase, avoiding solving the forward PDE repeatedly. Furthermore, we may use the same polynomial from the offline stage whenever new data arrive.

	\section{Numerical examples} \label{sect:numeric}
	
	We investigate the computation time and reconstructions using sparse adaptive interpolation MCMC. Although in the previous sections, we only consider the case where the conductivity is a linear expansion of uniformly bounded random variables, the method works equally in the log-uniform case where the log conductivity is a linear expansion of uniformly distributed random variables in a compact interval. We consider a  wavelet prior and a trigonometric prior in the experiments below. Throughout, we carry computations on a computer with 32 GB RAM and 3.70 GHz processor, using MATLAB 2020b. 
	
	We choose the domain $D$ as the unit square $(0,1)^2$.  We use $K=16$ or $K=64$ electrodes, with equal length, covering half of the boundary of $D$. The electrodes' positions with $K=16$ are depicted in \cref{fig:electrode_position}. We choose the contact admittance $\zeta$ at the electrodes as the hat function  as in \cite{Hyvonen2017}).%with height $1000$. 
We use $K-1$ current patterns 
	\begin{equation*}
		I^{(k)}=e_1-e_{k+1}, \quad k=1,\dots,K-1.
	\end{equation*} 
	
	\begin{figure}
		\centering
		\begin{tikzpicture}
			\draw (0,0) rectangle (4,4);
			\draw[step=0.25cm,gray,very thin] (0,0) grid (4,4);
			\draw[thick,blue] (0.25,0) -- (0.75,0); 
			\draw[thick,blue] (1.25,0) -- (1.75,0);
			\draw[thick,blue] (2.25,0) -- (2.75,0);
			\draw[thick,blue] (3.25,0) -- (3.75,0);
			\draw[thick,blue] (0,0.25) -- (0,0.75);
			\draw[thick,blue] (0,1.25) -- (0,1.75);
			\draw[thick,blue] (0,2.25) -- (0,2.75);
			\draw[thick,blue] (0,3.25) -- (0,3.75);
			\draw[thick,blue] (4,0.25) -- (4,0.75);
			\draw[thick,blue] (4,1.25) -- (4,1.75);
			\draw[thick,blue] (4,2.25) -- (4,2.75);
			\draw[thick,blue] (4,3.25) -- (4,3.75);
			\draw[thick,blue] (0.25,4) -- (0.75,4); 
			\draw[thick,blue] (1.25,4) -- (1.75,4); 
			\draw[thick,blue] (2.25,4) -- (2.75,4); 
			\draw[thick,blue] (3.25,4) -- (3.75,4); 
		\end{tikzpicture}
		\caption{Positions of the 16 electrodes, in blue}
		\label{fig:electrode_position}
	\end{figure}
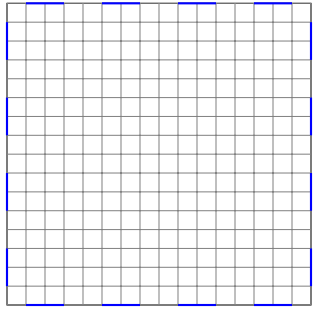

	The numerical solutions of the forward boundary value problem \eqref{SCEM} at the interpolation nodes are computed by FEs with piecewise linear basis functions. %To calculate the stiffness matrix, we adapt the function \verb|bld_master| in the MATLAB package \verb|EIDORS| \cite{Vauhkonen2001}, which employs a formula in \cite{Vavasis1996}. 
	We use mesh size $2^{-6}$ for both data generation and reconstruction. 
	%, with the possibility of inverse crime. 
	%We intend to do so to ignore the error arising from FE approximation, as our focus in this paper is interpolation. We did experiments with mesh size $2^{-8}$ for data generation, $2^{-6}$ for reconstruction. We observed no significant difference in the reconstructions made by this choice and that made by choosing the same mesh size $2^{-6}$ for both purposes.  
	To generate data, we solve the forward PDE by FE using the reference conductivity to get the noise-free voltage vectors $V^{(1)},\dots,V^{(K-1)}$. The data are obtained by adding to the voltage vectors Gaussian noises with a relatively small standard deviation 
	\begin{equation*}
		sd_{noise} = 10^{-4}(V_{max}-V_{min}), 
	\end{equation*}
	where $V_{max}=\max_{j,k} V_j^{(k)}, V_{min}=\min_{j,k} V_j^{(k)}$.

	For interpolation, we choose the univariate sequence $(z_k)$  as the R-Leja sequence on $[-1,1]$, as in subsection \ref{sec:explicit_rate}. The complex Leja points are computed by an exact formula in \cite{Reichel1990}. That is, for an integer $j$ with binary representation 
	\begin{equation*}
		j= \sum_{k=0}^\infty j_k2^k,\quad j_k\in \{0,1\},
	\end{equation*}
	the Leja points $\{z_j^{\mathbb{C}}\}$ on the complex unit disk are: 
	\begin{equation*}
		z_j^{\mathbb{C}}:=\exp(i\pi \sum_{k=0}^{\infty}j_k2^{-k}).
	\end{equation*} 
	The R-Leja points are obtained by projecting these complex Leja points into the real axis.

	\subsection{Parametrizations}
	\subsubsection{Wavelet prior}
	We begin with the  two dimensional Haar wavelet expansion on the domain $[0,1]^2$. In {one dimension}, Haar wavelets on $[0,1]$ are formed from the scaling function $\varphi:=1_{[0,1]}$ and the mother wavelet $\psi:=1_{[0,1/2)}-1_{[1/2,1)}$. In two dimensions, there are three mother wavelets $\psi^1:=\varphi\otimes \psi$, $\psi^2:=\psi \otimes \varphi$, $\psi^3:=\psi\otimes \psi$, as shown in \cref{fig:mother_wavelets}. For $i=1,2,3$, $l\in \mathbb{N}$, $k=(k_1,k_2)\in \{0,\dots,2^l-1\}^2$, let 
	\begin{equation*}
		\psi_{l,k}^i(x):=\psi^i(2^lx-k). 
	\end{equation*}
	The functions $\psi_{l,k}^i$ together with $1_D$ form an orthogonal basis for $L^2([0,1]^2)$. Two-dimensional Haar wavelets are used in \cite{Chkifa2012, Chkifa2013} for parametric PDEs, and in \cite{Wacker2019} for Bayesian inverse problems.  
	
	\begin{figure}
		\centering
		\includegraphics[scale=0.55]{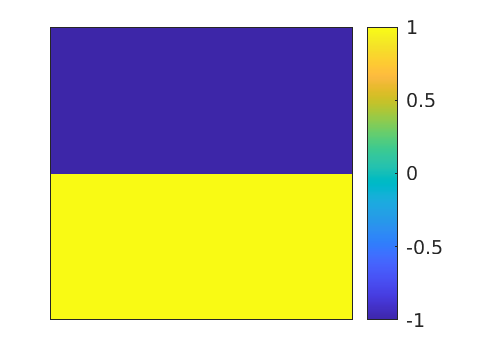} 
		\includegraphics[scale=0.55]{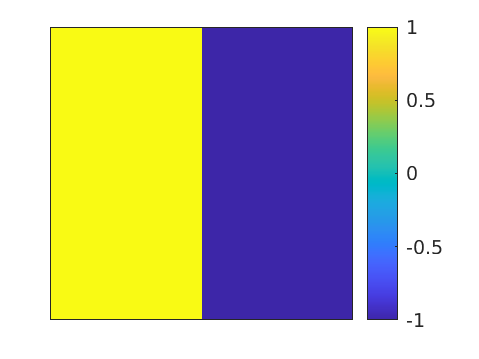} 
		\includegraphics[scale=0.55]{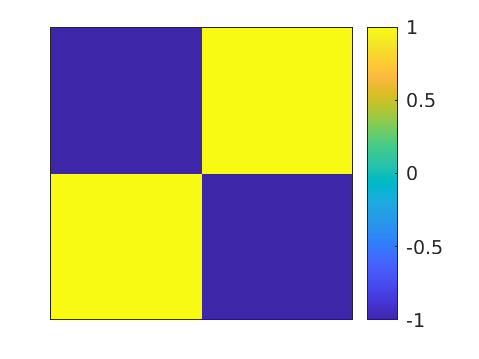} 
		\caption{Three mother wavelets in 2d. From left to right: $\psi^1, \psi^2, \psi^3$.}
		\label{fig:mother_wavelets}
	\end{figure}
	
	Following  Chkifa et al. \cite{Chkifa2013}, we assume that the conductivity $\sigma$ admits the  parametrization
	\begin{equation}\label{wavelet_para}
		\sigma(x,y) := \bar{\sigma} + c\sum_{l=0}^\infty 2^{-\gamma l} \sum_{i=1}^3 \sum_{k\in \{0,\dots,2^l-1\}^2} y_{l,k,i}\psi_{l,k}^i(x),
	\end{equation} 
	where $c,\gamma$ are positive constants. 
	The upper and lower bounds for $\sigma$ are 
	$\bar{\sigma}\pm c\sum_{l=0}^{\infty}2^{-\gamma l}$.
	We choose $\bar{\sigma}:=1.1$, $c:=0.3(1-2^{-\gamma})$ as in \cite{Chkifa2013}, so that the conductivity $\sigma$ always belongs to the bounded interval $[0.2,2.0]$. 
	We convert the expansion \eqref{wavelet_para} into the affine parametrization \eqref{affine_para} by denoting  
	\begin{equation*}
		\psi_j:= c2^{-\gamma l}\psi_{l,k}^i, y_j:= y_{l,k,i}, 
	\end{equation*}
	where $j=2^{2l}+3(2^l k_1+k_2)+i-1$. It follows that $||\psi_j||_{L^\infty(D)}=O(j^{-\gamma/2})$. Hence, if $\gamma>2$, the decay rate in Assumption \ref{lp-summability_assumption} is satisfied with $s=\gamma/2 -1$.

	\subsubsection{Trigonometric prior}
	We consider a log-affine parametrization $\sigma(x) = e^{u(x)}$, where $u(x)$ is of the form
	\begin{equation}\label{Matern_para}
		u(x_1,x_2) = \sum_{j_1,j_2\ge 0} y_{(j_1,j_2)} \frac{\eta}{(\tau^2+a^2\pi^2(j_1^2+j_2^2))^{\kappa}} \cos\left(\pi j_1x_1\right)\cos\left(\pi j_2 x_2\right).
	\end{equation} 
	Here we choose $\eta=10^{-1}$, $\tau=3\cdot10^{-1}$, $a=10^{-1.5}$ and $\kappa=5$; the random variables $y_{(j_1,j_2)}$ follow the uniform distribution in $[-1,1]$. This mimics the Kahunen-Loeve expansion of a Gaussian random variable of the Whittle-Matern type as considered in \cite{Dunlop2016} for the EIT problem with a Gaussian prior, except that in our case the random variables $y_{j_1,j_2}$ are uniformly distributed in $[-1,1]$. In this case, the corresponding Gaussian covariance
	is $\eta^2(\tau^2-a^2\Delta)^{-2\kappa}$, where 
	%If $y_{(j_1,j_2)}$ were independent $N(0,1)$ random variables, $u\sim N(0,C)$, where  
	%\begin{equation}\label{Matern_cov}
	%	C = \eta^2(\tau^2-\Delta)^{-\gamma'},   
	%\end{equation}
	$\Delta$ is the Laplace operator on $D=[0,1]^2$ with homogeneous Neumann boundary condition. 
	%The covariance of the form \eqref{Matern_cov} has been employed in the Bayesian inversion of EIT in \cite{Dunlop2016}. It is modified from the Matern covariance considered in \cite{Roininen2014}. 
	
	We cast \eqref{Matern_para} into the parametrization \eqref{log_affine_para} by denoting 
	\begin{equation*}
		\psi_j:= \frac{\eta}{(\tau^2+a^2\pi^2(j_1^2+j_2^2))^{\kappa}} \cos\left(\pi j_1x_1\right)\cos\left(\pi j_2 x_2\right), y_{j}:=y_{(j_1,j_2)}, 
	\end{equation*}
	where $j = 0.5(j_1+j_2)(j_1+j_2+1)+j_2+1$. It follows that $||\psi_j||_{L^\infty(D)}=O(j^{-\kappa})$. 
	
	\subsection{Computational time of the sparse adaptive interpolation MCMC}
	The complexity of the adaptive interpolation algorithm has not been rigorously justified. We  study numerically
%	demonstrate in an example how 
	the complexity of Algorithm \ref{alg:SAI-MCMC} with respect to the number $N$ of interpolating nodes,  i.e. the cardinality of the lower set $\Lambda_N$. 
	We measure the CPU time taken to perform the offline and online stages of sparse adaptive interpolation MCMC for various values of $J$ and $N$.  In the online stage, we fix $M=10^4$. %We do not include details about the data as well as the specifications of the two priors as they are not relevant.    
	
	The results are shown in \cref{fig:runningtime_wavelet} and \cref{fig:runningtime_Matern}. We observe that for fixed values of $J$, both the offline and online running times  grow linearly in $N$. In addition, the slopes of the best-fit straight lines only increase moderately as $J$ increases. 
	
	\begin{figure}
		\centering
		
		\includegraphics[scale=0.45]{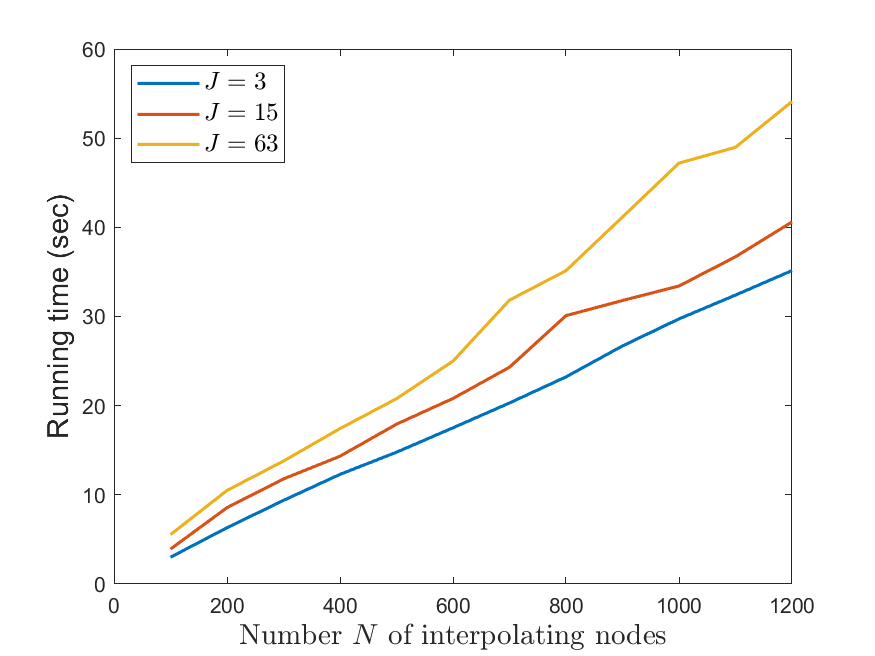} 
		\includegraphics[scale=0.45]{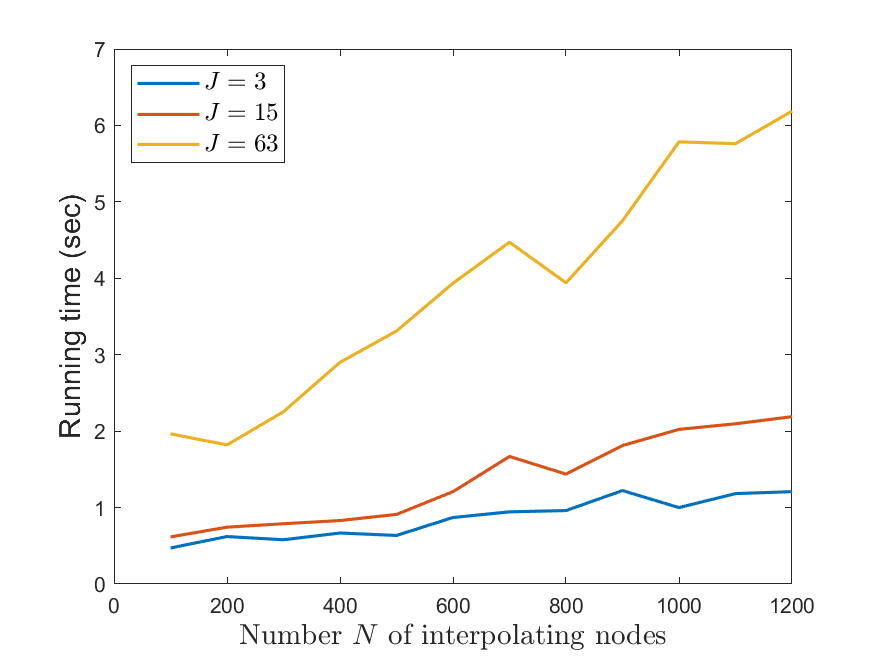} 
		\caption{CPU time of the offline and online stages of sparse adaptive interpolation-MCMC using wavelet prior. Left: offline, right: online.}
		\label{fig:runningtime_wavelet}
	\end{figure} 
	
	\begin{figure}
		\centering
		
		\includegraphics[scale=0.45]{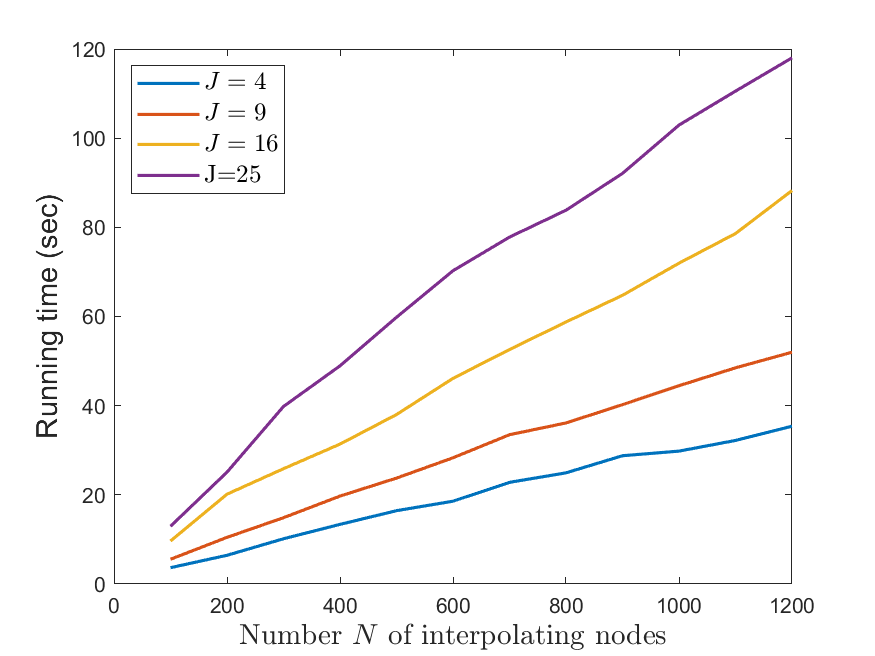} 
		\includegraphics[scale=0.45]{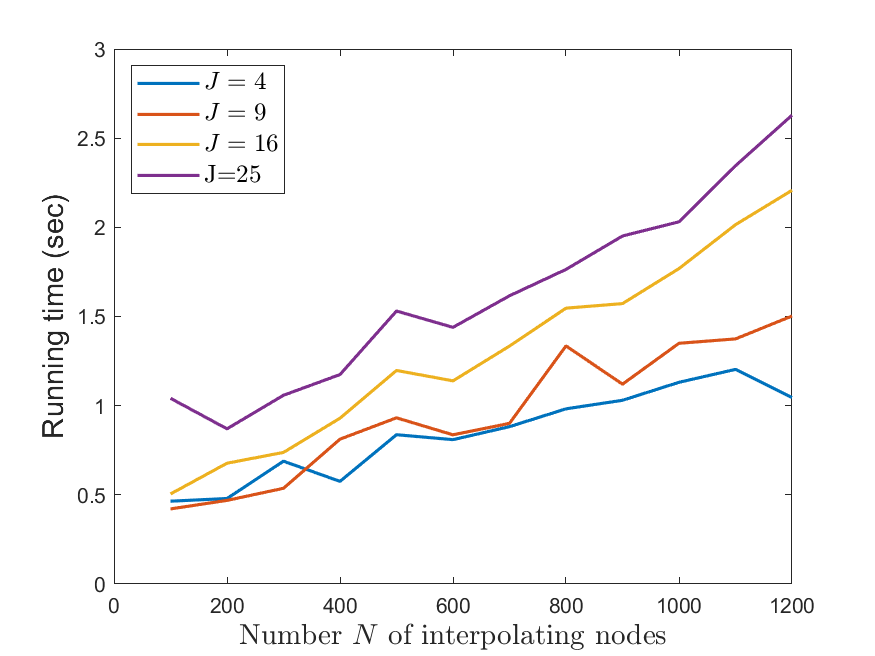} 
		\caption{CPU time of the offline and online stages of  sparse adaptive interpolation-MCMC using trigonometric prior. Left: offline, right: online.}
		\label{fig:runningtime_Matern}
	\end{figure}

	\subsection{Reconstruction}
	
	%For Bayesian inversion, we use the uniform distribution as the prior.  
	The initial state of the MCMC process is chosen randomly according to the uniform distribution. We use sparse adaptive interpolation with $N=5000$ interpolating nodes to approximate the forward map. We choose reflection random walk Metropolis (RRWM) as the MCMC sampling method, discarding the first $20\%$ samples as burn-in. The step size of RRWM is chosen to be $\beta:=0.001$. The acceptance probability is about 10\%-30\% in all experiments considered below. For  the wavelet prior, to have a smoother visualization of the conductivity, we use bilinear interpolation to display the reconstructions. 
	
	First, we consider reconstructions using wavelet prior. We  choose $\gamma=3$ in \eqref{wavelet_para}. 
	The data are generated using $(y_1,y_2,y_3)=(0.025,0.025,-0.025)$, $(y_{10},y_{11},y_{12})=(-0.8,-0.8,-0.8)$ and $y_j=0$ for other $j$'s. The true conductivity with this choice of $\by$ is displayed in the upper left of \cref{fig:16pix}, with a square inclusion of lower conductivity.  
	
	To truncate expansion \eqref{wavelet_para}, we replace it with a finite summation where $l$ goes from $0$ to $L$ for some $L\ge 0$, namely 
	\begin{equation}\label{wavelet_truncated}
			\sigma^J(x,y) := \bar{\sigma} + c\sum_{l=0}^{L} 2^{-\gamma l} \sum_{i=1}^3 \sum_{k\in \{0,\dots,2^l-1\}^2} y_{l,k,i}\psi_{l,k}^i(x),
	\end{equation} 
	where $J=4^{L+1}-1$ is the total number of parameters. The resulting truncated approximation $\sigma^J$ is a piecewise constant function, which can be visualized by  $2^{L+1}\times 2^{L+1}$ pixels.

	In our first experiment, we use $L=1$ in \eqref{wavelet_truncated} for reconstruction, corresponding to $J=15$ parameters.  We use sparse adaptive interpolation MCMC with $M=10^6$ samples. The total time for adaptive selection of the index set $\Lambda$ and running MCMC is about 43 minutes. We then compare the reconstruction by sparse adaptive interpolation MCMC with the two reconstructions by plain MCMC: one with $10^5$ samples that takes 52 minutes to run, and one with $10^6$ samples that takes 10.5 hours.  \cref{fig:16pix} demonstrates that the quality of reconstruction by sparse adaptive interpolation MCMC is comparable to that by plain MCMC with the same number of MCMC samples, with better contrast between the values of the conductivity inside the inclusion, and in the background. Sparse adaptive interpolation MCMC takes far less time than plain MCMC with the same number of samples, as we do not need to solve the forward problem with high accuracy at each sample.  Comparing the recovered conductivity by sparse adaptive interpolation MCMC with $10^6$ samples to that obtained from the plain MCMC with $10^5$ samples, with a comparable running time, we find that the sparse adaptive interpolation MCMC provides a significantly better recovery, where the area of lower conductivity is smaller and is confined to the area of the original inclusion, and the background conductivity is closer to the original.
	% Moreover, given the same amount of time, SAI-MCMC seems to be better at detecting the inclusion's position. 
	If we use a finer FE mesh for reconstruction, we would expect more computational benefits from sparse adaptive interpolation MCMC.

	\begin{figure}
		\centering
		\begin{subfigure}[b]{0.475\textwidth}
			\centering
			\includegraphics[width=\textwidth]{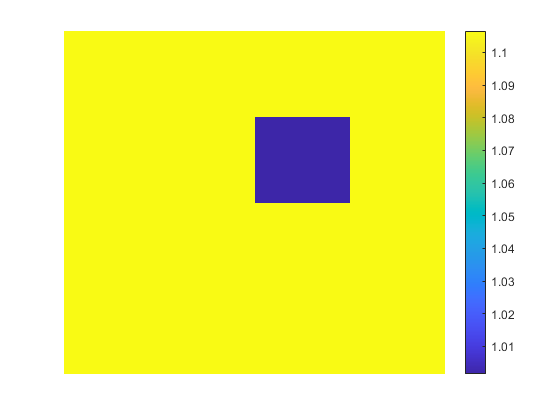}
		\end{subfigure}
		\hfill
		\begin{subfigure}[b]{0.475\textwidth}  
			\centering 
			\includegraphics[width=\textwidth]{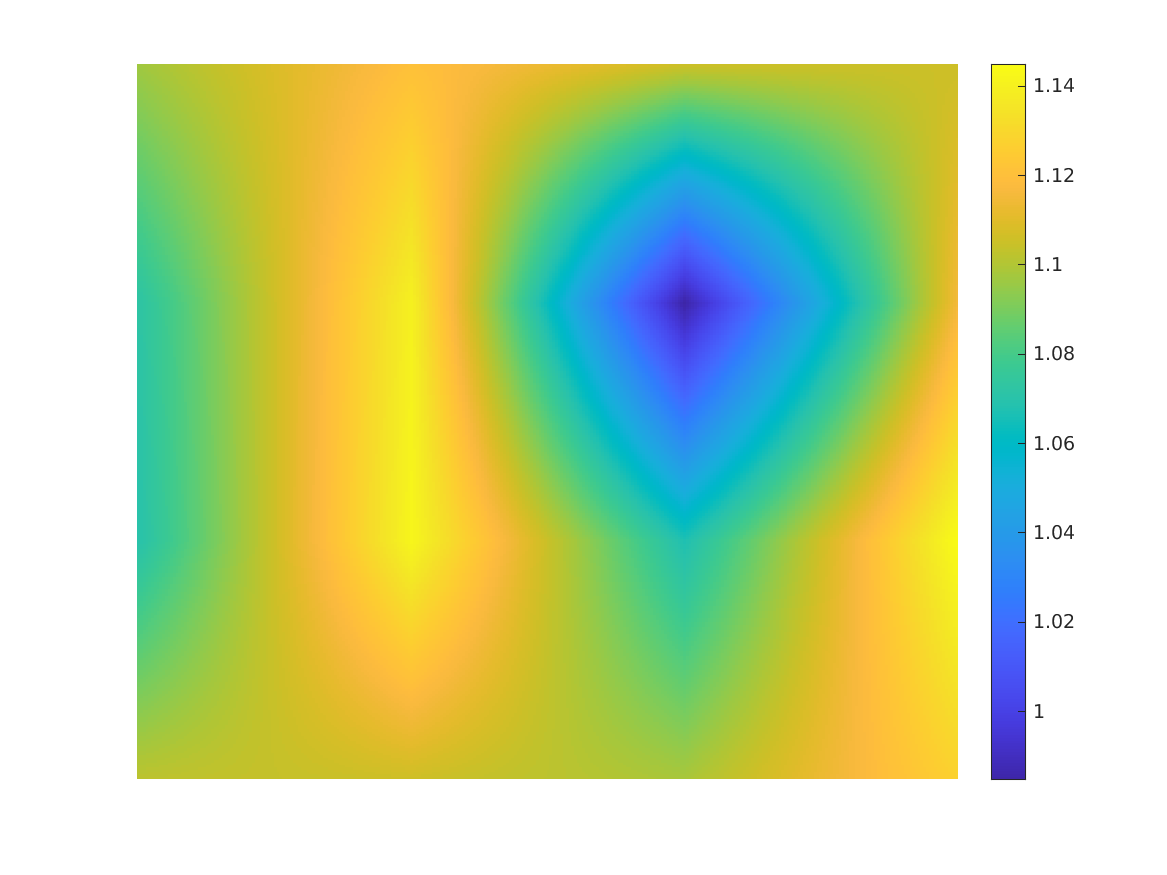}
		\end{subfigure}
		\vskip\baselineskip
		\begin{subfigure}[b]{0.475\textwidth}   
			\centering 
			\includegraphics[width=\textwidth]{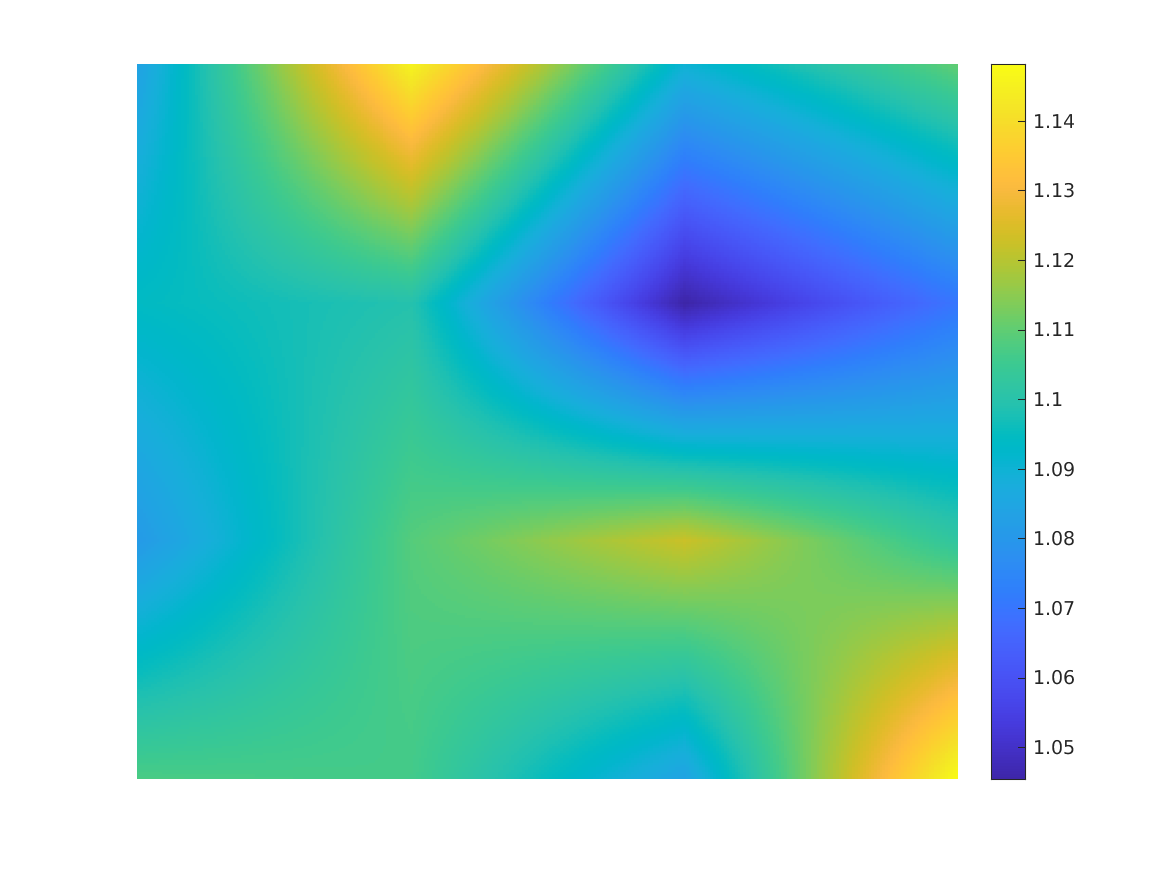}
		\end{subfigure}
		\hfill
		\begin{subfigure}[b]{0.475\textwidth}   
			\centering 
			\includegraphics[width=\textwidth]{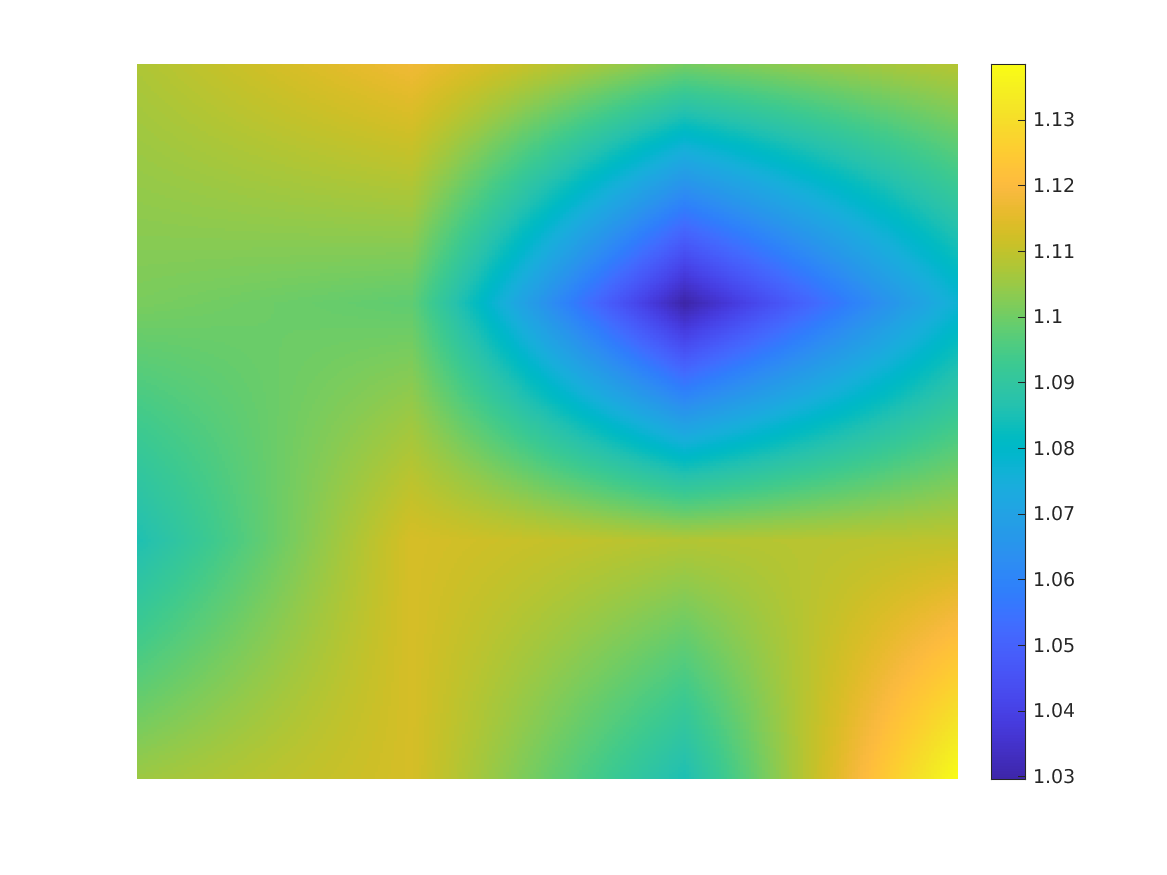}
		\end{subfigure}
		\caption{Reconstruction with wavelet prior using 16 pixels. Upper left: truth. Upper right: reconstruction by sparse adaptive interpolation MCMC with $10^6$ samples and $5000$ interpolating nodes. Lower left: reconstruction by plain MCMC with $10^5$ samples. Lower right: reconstruction by plain MCMC with $10^6$ samples.} 
		\label{fig:16pix}
	\end{figure}

Next, we compare the recovery for the conductivity produced by sparse adaptive interpolation MCMC, where the set of interpolating points $\Lambda$ is chosen by the adaptive algorithm, to that obtained by interpolation MCMC where the interpolating points are chosen isotropically, using a simple bound for the polynomial degree.

{We consider a surrogate using interpolation with the following isotropic index set 
	\begin{equation*}
		Iso(15,5):=\{\nu \in \mathbb{N}_0^{15}: \nu_1+\dots+\nu_{15}\le 5 \}.
	\end{equation*}
The index set $Iso(15,5)$ corresponds to the space of polynomials of 15 variables as in the numerical example above, but with degree at most 5. The cardinality of $Iso(15,5)$ is $\binom{15+5}{5}=15504$. The reconstruction by interpolation MCMC using $Iso(15,5)$ with the same number of MCMC samples, which is $10^6$, as in the previous numerical example, is shown in \cref{fig:iso_15k}. Although the cardinality of $Iso(15,5)$ is significantly larger than that of $\Lambda$, i.e. here we have far more interpolating points, the reconstruction using $Iso(15,5)$ fails to capture the inclusion in the conductivity field. We provide a heuristic explanation as follows. Due to the decay rate of coefficients in the parametrization \eqref{wavelet_truncated}, the components $y_1, y_2, y_3$ contribute significantly more to $\sigma^J(x,y)$ than other components $y_4,\dots, y_{15}$. Table \ref{table_index_set} shows the maximum value and the average for each $\nu_j$ among the 5000 indices in the set $\Lambda$ constructed adaptively in the previous numerical example.  We observe from Table \ref{table_index_set} that most elements in the index set $\Lambda$ are placed in the directions corresponding to $y_1,y_2,y_3$. 
	On the other hand, the average of each component $\nu_j$ of $\nu\in Iso(15,5)$ is $\frac{5}{15+1}=0.3125$. The isotropic approach treats all the parameters equally.
	As a result, the interpolant using $Iso(15,5)$ may not provide a good approximation of the forward map as the one using the anisotropic set $\Lambda$ constructed by the adaptive algorithm. This example shows clearly the advantage of the adaptive approach to construct the set of interpolating points over the isotropic approach. 
	}

	\begin{table}
		\begin{tabular}{l|lllllllllllllll}
			& $\nu_1$ & $\nu_2$ & $\nu_3$ & $\nu_4$ & $\nu_5$ & $\nu_6$ &  $\nu_7$& $\nu_8$ & $\nu_9$ & $\nu_{10}$ & $\nu_{11}$ & $\nu_{12}$ & $\nu_{13}$ & $\nu_{14}$ & $\nu_{15}$ \\ \hline
			Max     &  8       & 8 & 8& 3 & 3 & 3 & 3 & 3 & 3 & 3 & 3 & 3 & 3 & 3 & 3 \\ \hline
			Average &  1.44  &1.36   & 1.47 & 0.17 & 0.17 & 0.15 & 0.12 & 0.09 & 0.09 &0.20  &0.17  &0.16  &0.18  &0.16  & 0.16
		\end{tabular}
		\caption{The maximum and average of components of indices $\nu=(\nu_1,\dots,\nu_{15})$ in the index set $\Lambda$, constructed by sparse adaptive interpolation, with $J=15$ in \eqref{wavelet_truncated}.}
		\label{table_index_set}
	\end{table}
	
	\begin{figure}
		\centering
		
		\includegraphics[scale=0.475]{truth_wavelet_pixx} 
		\includegraphics[scale=0.35]{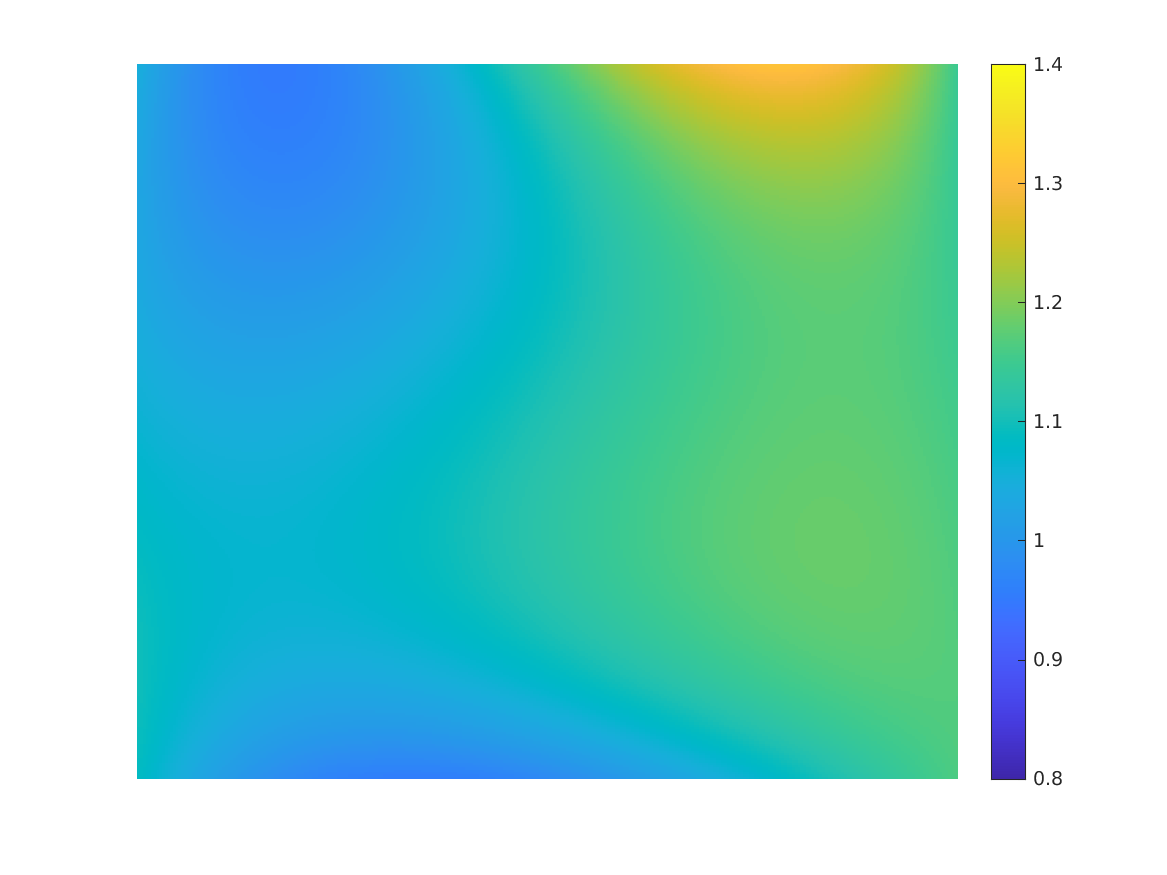} 
		\caption{Reconstructions by sparse interpolation MCMC with the isotropic set $Iso(15,5)$ of cardinality 15504. Left: truth. Right: reconstruction using $Iso(15,5)$.}
		\label{fig:iso_15k}
	\end{figure} 
	
	Next, we use the same reference conductivity for data generation and $L=2$ for reconstruction, corresponding to $J=63$ parameters. We use $M=10^6$ samples. Two constructions with $K=16$ and $K=64$ electrodes are displayed in \cref{fig:64pix}, suggesting that increasing the number of parameters and the data size may enhance reconstruction quality: the panel on the right for the case of 64 electrodes shows better contrast in the values of the conductivity inside and outside of the inclusion.  
	
	\begin{figure}
		\centering
		
		\includegraphics[scale=0.35]{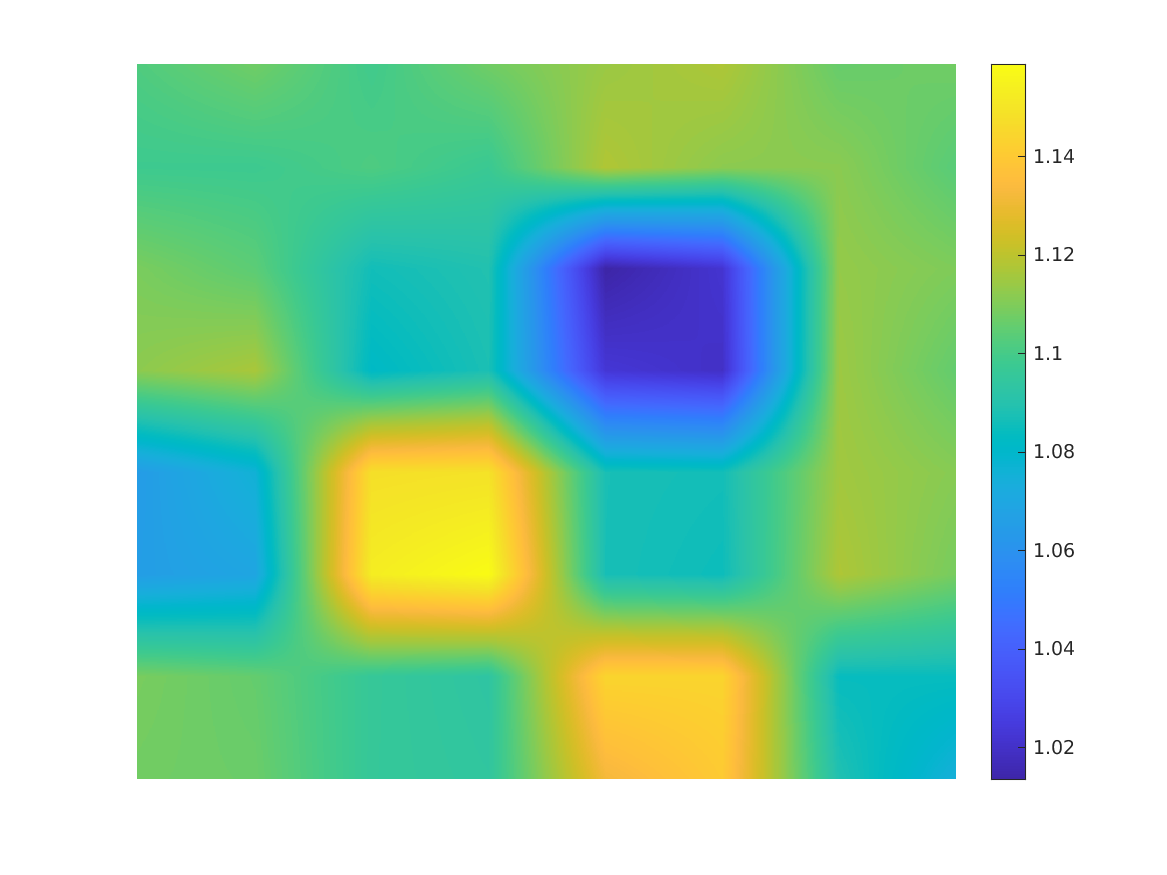} 
		\includegraphics[scale=0.35]{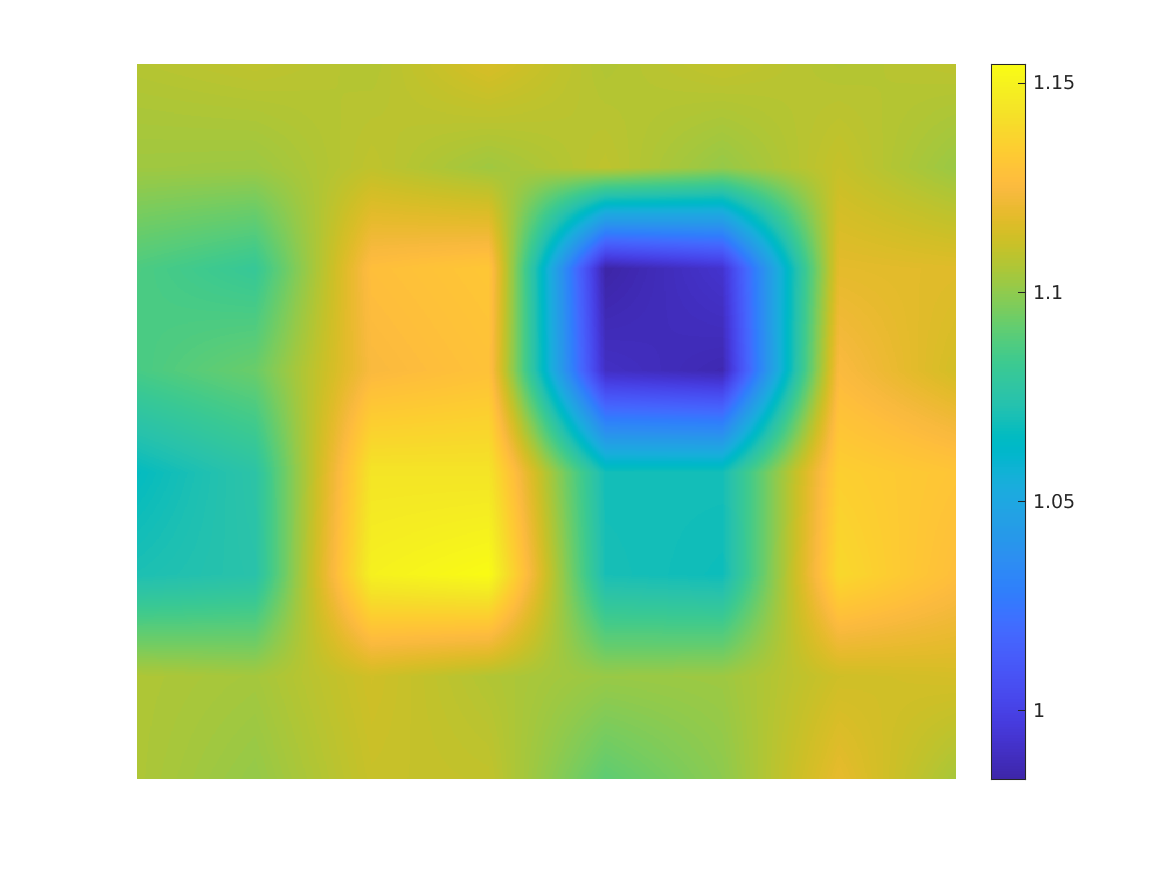} 
		\caption{Reconstructions by sparse adaptive interpolation MCMC with wavelet prior using 64 pixels. Both plots use $10^6$ samples and $5000$ interpolating nodes. Left: 16 electrodes. Right: 64 electrodes.}
		\label{fig:64pix}
	\end{figure}

	Finally, we consider a reconstruction using trigonometric prior. 
	In this experiment, we use $K=16$ electrodes. 
	%We fix $\eta=10^{14}, \tau =30, \gamma' = 10$ in \eqref{Matern_para}.  
	We replace the expansion \eqref{Matern_para} by a finite sum with 
	$(j_1,j_2)\in \{0,1,\dots,7\}$, corresponding to $J=64$ parameters. 
	We generate data by using a realization from the uniform distribution on $[-1,1]^{64}$ for the $y_{(j_1,j_2)}$'s. The target reference conductivity and the reconstruction by sparse adaptive interpolation MCMC with $M=10^6$ samples are shown in  \cref{fig:matern_recovery}. This example illustrates the flexibility of the sparse adaptive interpolation MCMC process as it can be applied to different types of parametrization of the conductivity.
	
	\begin{figure}
		\centering
		\includegraphics[scale=0.35]{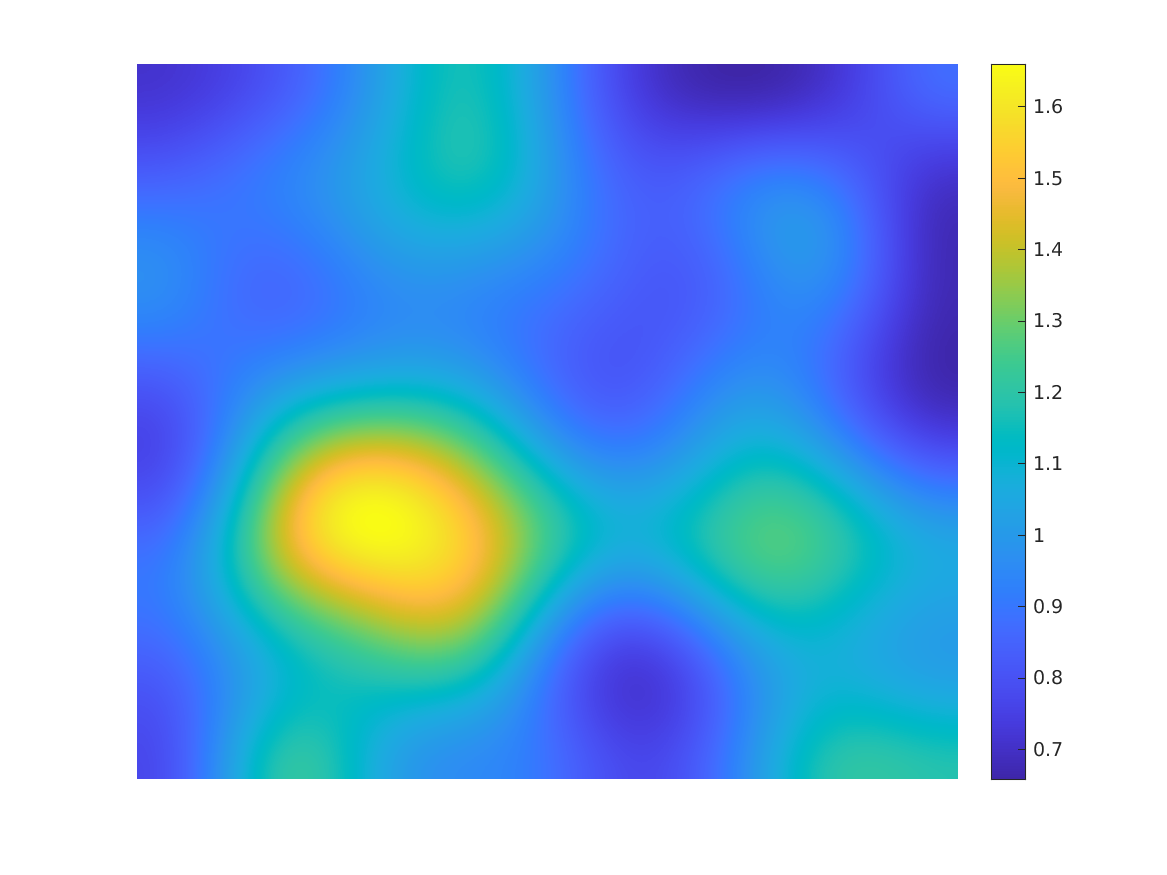} 
		\includegraphics[scale=0.35]{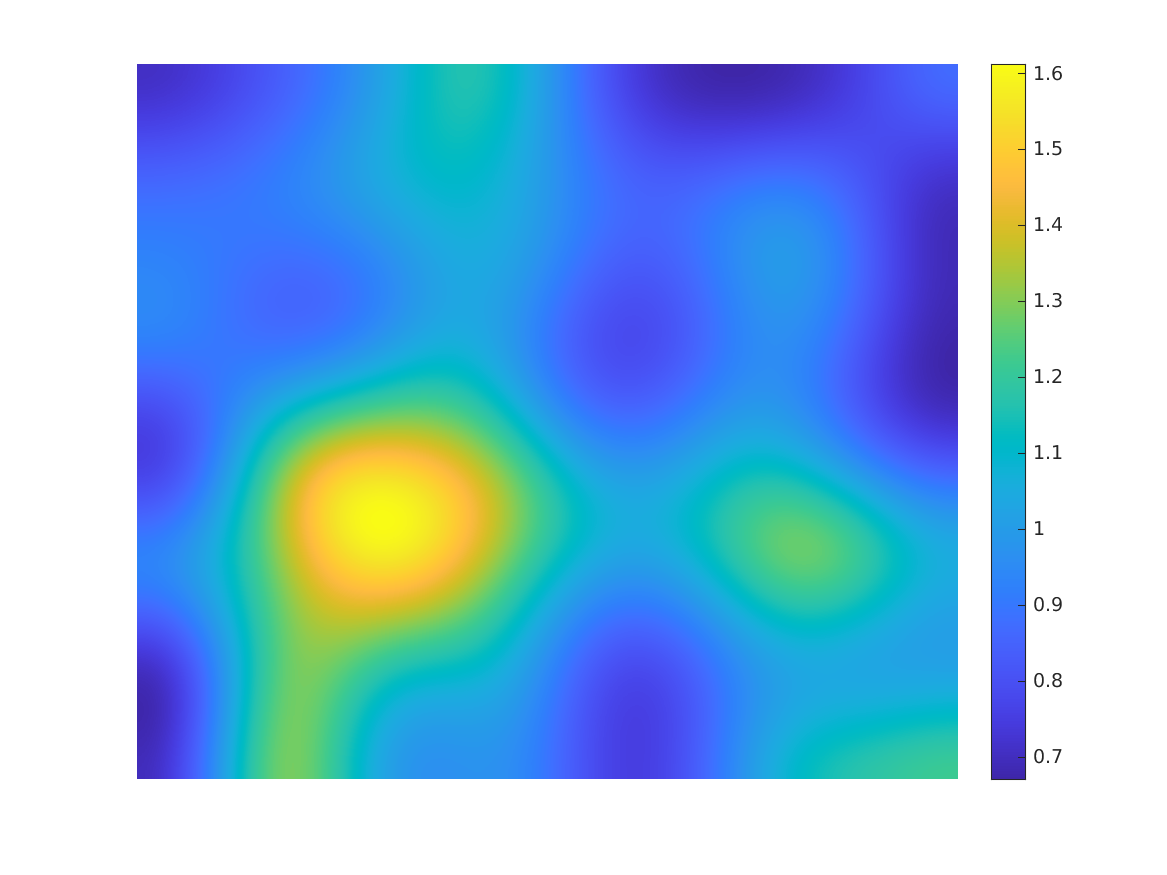} 
		\caption{Reconstruction with trigonometric prior. Both plots use $10^6$ samples and $5000$ interpolating nodes. Left: truth. Right: reconstruction.}
		\label{fig:matern_recovery}
	\end{figure}

%	\section{Conclusions}
%	We have proposed the algorithm SAI-MCMC and demonstrated its viability to accelerate the calculation of MCMC for the EIT problem. Our numerical examples with simulated data show the possibility of reconstructing the unknown conductivity using our proposed algorithm.  To consider more realistic experiments, we may employ priors such as level-set and star-shaped \cite{Dunlop2016} to incorporate information about the number of inclusions and their positions. In addition, the quantities $\gamma, \gamma'$ in the wavelet and trigonometric priors plays could be learned by treating them as random unknowns, using the hierarchical Bayesian approach in \cite{Dunlop2016H}. 
	
%	On the theoretical aspect, we have shown the existence of a sequence of index sets $(\Lambda_N)$ such that the error estimate in Proposition \ref{G-Gl} holds. In order to analyze the complexity of SAI-MCMC, we made the assumption it satisfies a similar error estimate. However, as the adaptive selection of index sets is a greedy algorithm, justification of such an assumption is a challenging task. Recently, new algorithms on adaptive sparse grids and their error estimates have been studied in the context of parametric elliptic PDEs in \cite{Eigel2022}.  

	\section*{Acknowledgements}
The first author gratefully acknowledges a postgraduate scholarship awarded by the Singapore Ministry of Education, and financial support from Nanyang Technological University. The research is supported by the Singapore Ministry of Education Tier 2 grant MOE2017-T2-2-144.

	\bibliographystyle{plain}
	
	\bibliography{EIT_uniform_bib}

	\appendix

	\section{Regularity and FE approximation of the forward solution}\label{appendix:regularity}
	
	%We are going to prove Proposition \ref{V-Vl}, which estimate the FE error of the forward solution.  In Proposition \ref{v_H2_bound}, we provide an explicit bound for $||v||_{H^2(D)/\mathbb{R}}$ in terms of $\sigma$.  Based on this bound, we obtain a version of Cea's lemma. We finish the proof of Proposition \ref{V-Vl} by using the duality argument. 
	 We derive in this appendix the FE error of the forward solver, uniformly with respect to $\by\in U$. We thus need to establish the regularity  of the solution to the forward equation. We have the following estimate.

	\begin{lemma}\label{v<=(v,V)} 
		For all $[(v,V)]\in \mathcal{H}^1$, 
		\begin{equation*}
			||\nabla v||_{L^2(D)}\le ||(v,V)||_{\mathcal{H}^1}.
		\end{equation*}
	\end{lemma}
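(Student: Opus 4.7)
The plan is to exploit the fact that the gradient is invariant under the addition of a constant, so that $\nabla v$ is well-defined on the equivalence class $[(v,V)] \in \mathcal{H}^1$ even though $v$ itself is not. This observation is what makes the inequality essentially immediate from the definition of the quotient norm.

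Concretely, for any $c \in \mathbb{R}$ we have $\nabla v = \nabla(v-c)$ pointwise, hence
\[
\|\nabla v\|_{L^2(D)} = \|\nabla(v-c)\|_{L^2(D)} \le \|v-c\|_{H^1(D)} \le \|v-c\|_{H^1(D)} + |V - c\mathbf{1}|,
\]
the first inequality being the trivial bound $\|\nabla w\|_{L^2(D)} \le \|w\|_{H^1(D)}$ (since the $H^1$ norm dominates the $L^2$ norm of the gradient), and the last inequality being the addition of a nonnegative term. Since the left-hand side is independent of $c$, I can take the infimum over $c \in \mathbb{R}$ on the right, which by the definition of the $\mathcal{H}^1$-norm gives exactly $\|(v,V)\|_{\mathcal{H}^1}$.

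There is essentially no obstacle: the only thing to be careful about is to verify that the statement is consistent with the quotient structure, i.e.\ that $\nabla v$ is the same for every representative of the equivalence class $[(v,V)]$, which is clear because equivalent representatives differ by a constant $(c, c\mathbf{1})$ whose first component has vanishing gradient. So the lemma is really a one-line computation and can be written in a couple of display lines.
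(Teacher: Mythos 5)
Your proof is correct and is essentially identical to the paper's: both observe $\nabla v=\nabla(v-c)$, bound $\|\nabla(v-c)\|_{L^2(D)}\le\|v-c\|_{H^1(D)}+|V-c\mathbf{1}|$, and take the infimum over $c\in\mathbb{R}$. Nothing to add.
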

	\begin{proof}
		For every $c\in \mathbb{R}$,  
		\begin{equation*}
			||\nabla v||_{L^2(D)}=||\nabla (v-c)||_{L^2(D)}\le ||v-c||_{H^1(D)}\le ||v-c||_{H^1(D)}+|V-c{\bf 1}|.
		\end{equation*}
		Taking infimum with respect to $c\in \mathbb{R}$, we get the conclusion.
	\end{proof}

	%%%%%%%%%%%%%%%%%%%%%%%%%%%%%%%%%%%%%%%%%%%%%%%%%%%%%%
%	\begin{comment}
%		{\todo if this regularity result is standard somewhere (in Hyvonen?), cite it instead}
%		\textcolor{blue}{Hyvonen's paper only shows that if $\zeta\in H^s(\partial D)$ then $v\in H^{s+3/2}(D)$. But for our purpose, it is crucial to specify the dependence of the RHS'constant on $\sigma$. We will need to use this estimate again for the lognormal case.}
%	\end{comment}
%%%%%%%%%%%%%%%%%%%%%%%%%%%%%%%%%%%%%%%%%%%%%%%%%%%%%%%%%%%%%%
%	\begin{assumption}\label{assumption:W1inf}
%		$\sigma\in W^{1,\infty}(D)$ and $\zeta\in H^t(\partial D)$ for some $t>\frac{1}{2}$. 
%	\end{assumption} 
%	As in \cite{Hyvonen2017}, if Assumption \ref{assumption:W1inf} is satisfied, $v\in H^2(D)/\mathbb{C}$. In the following, we provide an explicit bound on the $H^2$-norm of $v$ in terms of $\sigma$. 
In the following proposition, we show the uniform $H^2(D)$ regularity for the solution $v(\by)$ of the forward equation, uniformly with respect to $\by\in U$.
	\begin{proposition}\label{v_H2_bound} 
		Under Assumption \ref{assumption:sigma,zeta}, for the solution $(v,V)$ to \eqref{SCEM}  $\|v(\by)\|_{H^2(D)/{\mathbb R}}$ is uniformly bounded with respect to $\by\in U$.
%		\begin{equation*}
%			||v||_{H^2(D)/\mathbb{R}}\le C\frac{||\sigma||_{W^{1,\infty}(D)}+\sigma^-||\zeta||_{{\ha W^{1,\infty}}(\partial(D))}||\sigma^{-1}||_{W^{1,\infty}(D)}}{\sigma^-\min\{\sigma^-,\zeta^-\}}|I|.
%		\end{equation*}
	\end{proposition}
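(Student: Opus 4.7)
The plan is to reduce the claim to the fixed-coefficient $H^2$ regularity result for the smoothened CEM established in \cite{Hyvonen2017}, and then track the constants to show that they depend on $\sigma(\cdot,\by)$ only through quantities that are uniformly bounded in $\by\in U$.

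First I would verify that under Assumption \ref{assumption:sigma,zeta} the conductivity $\sigma(\cdot,\by)$ lies in $W^{1,\infty}(D)$ with a bound independent of $\by$. Indeed, from \eqref{affine_para},
\begin{equation*}
\|\sigma(\cdot,\by)\|_{W^{1,\infty}(D)}\le \|\bar\sigma\|_{W^{1,\infty}(D)}+\sum_{j\ge 1}\|\psi_j\|_{L^\infty(D)}+\sup_{x\in D}\sum_{j\ge 1}\|\nabla\psi_j\|_{L^\infty(D)},
\end{equation*}
which is finite and $\by$-independent by \eqref{sum_psi} and Assumption \ref{assumption:sigma,zeta}. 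Combined with the uniform ellipticity \eqref{elliptic_parametric}, this places $\sigma(\cdot,\by)$ in the class of admissible conductivities treated by the regularity theory of \cite{Hyvonen2017}, with $W^{1,\infty}(D)$-norm controlled uniformly in $\by$.

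Second, I would rewrite the PDE in \eqref{SCEM} in non-divergence form, $-\sigma\Delta v=\nabla\sigma\cdot\nabla v$ in $D$, so that $\Delta v=-\sigma^{-1}\nabla\sigma\cdot\nabla v\in L^2(D)$. The right-hand side is bounded in $L^2(D)$ by
\begin{equation*}
\|\Delta v\|_{L^2(D)}\le (\sigma^-)^{-1}\|\nabla\sigma(\cdot,\by)\|_{L^\infty(D)}\|\nabla v\|_{L^2(D)},
\end{equation*}
and $\|\nabla v\|_{L^2(D)}\le\|(v,V)\|_{\mathcal{H}^1}$ by Lemma \ref{v<=(v,V)}, which in turn is controlled by \eqref{H1_norm_bound} uniformly in $\by$. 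The Robin condition on $\partial D$ is $\nu\cdot\sigma\nabla v+\zeta v=\zeta V$, where $V=\sum_{k}V_k\mathbf{1}_{E_k}$. By Lemma \ref{V<=(v,V)} and \eqref{H1_norm_bound}, $|V|$ is uniformly bounded. A crucial observation is that the smoothening hypothesis $\zeta\in W^{1,\infty}(\partial D)$ with $\zeta|_{\partial D\setminus\bar E}\equiv 0$ forces $\zeta$ to vanish continuously at each $\partial E_k$, so that $\zeta\mathbf{1}_{E_k}\in W^{1,\infty}(\partial D)$ for every $k$; consequently the Robin datum $\zeta V$ belongs to $W^{1,\infty}(\partial D)$ with a $\by$-independent bound.

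Finally, I would invoke the $H^2$-regularity result of \cite{Hyvonen2017} for the smoothened CEM on a convex polygon (whose electrodes lie in the interior of an affine edge/face as required by Assumption \ref{assumption:sigma,zeta}). That result yields an estimate of the form
\begin{equation*}
\|v\|_{H^2(D)/\mathbb{R}}\le C\bigl(\|\Delta v\|_{L^2(D)}+\|\zeta V\|_{H^{1/2}(\partial D)}+\|\zeta v\|_{H^{1/2}(\partial D)}+\|(v,V)\|_{\mathcal{H}^1}\bigr),
\end{equation*}
with $C$ depending only on $D$, $\sigma^-$, $\sigma^+$, $\|\sigma\|_{W^{1,\infty}(D)}$ and $\|\zeta\|_{W^{1,\infty}(\partial D)}$. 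Each of these quantities is uniform in $\by\in U$, and the right-hand side is also uniformly bounded by the preceding steps, which gives the desired uniform $H^2$-bound.

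The main obstacle, and the reason Assumption \ref{assumption:sigma,zeta} imposes the smoothened model, is the boundary term $\zeta V$: for the standard CEM (piecewise constant $\zeta$) the Robin datum would have jumps at $\partial E_k$, ruling out $H^2$-regularity and forcing a weaker FE convergence rate (cf.\ the remark after Proposition \ref{G-Gl}). The smoothening assumption circumvents this, and once one has the fixed-$\by$ regularity estimate of \cite{Hyvonen2017}, the only nontrivial bookkeeping is to check that every constant entering the final inequality depends on $\sigma(\cdot,\by)$ only through $\sigma^-$, $\sigma^+$ and $\|\sigma(\cdot,\by)\|_{W^{1,\infty}(D)}$, which is ensured by Assumption \ref{assumption:sigma,zeta}.
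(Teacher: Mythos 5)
Your overall strategy --- pass to the non-divergence form, observe that the Neumann/Robin datum is controlled uniformly in $\by$, and conclude by $H^2$ elliptic regularity on the convex domain --- is the same as the paper's, and your bookkeeping of the constants (uniform $W^{1,\infty}$ bound on $\sigma(\cdot,\by)$, Lemma \ref{v<=(v,V)}, Lemma \ref{V<=(v,V)}, and \eqref{H1_norm_bound}) is correct. The gap is in the final step, where you ``invoke the $H^2$-regularity result of \cite{Hyvonen2017} for the smoothened CEM on a convex polygon.'' No such off-the-shelf estimate is available in the polygonal setting: on a convex polygon, the regularity theorem actually used by the paper (Theorem 3.2.1.3 of Grisvard \cite{Grisvard2011}) gives $H^2$ regularity for the Neumann problem with $L^2$ source and \emph{homogeneous} Neumann data. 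For the inhomogeneous datum $g=\zeta(V-v)/\sigma\in H^{1/2}(\partial D)$ one must first lift $g$ to an $H^2(D)$ function whose normal derivative equals $g$, and such a lifting does not exist for arbitrary $g\in H^{1/2}(\partial D)$ on a polygon, because the normal-derivative traces of $H^2$ functions are subject to compatibility conditions at the vertices. The estimate you write down, with $\|\zeta V\|_{H^{1/2}(\partial D)}+\|\zeta v\|_{H^{1/2}(\partial D)}$ on the right-hand side, is therefore not justified for general data, and citing it assumes precisely the hard part of the proposition.

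What rescues the argument --- and what your proof needs to make explicit --- is that the Neumann datum is supported in $\bar{E}$, which by Assumption \ref{assumption:sigma,zeta} lies in the interiors of the affine edges and hence stays away from every corner. The paper exploits this by introducing, for each electrode, a cutoff $\phi_k\in C_0^\infty(\mathbb{R}^d)$ with $\phi_k\equiv 1$ on $E_k$ and support disjoint from the other edges, lifting $\zeta_k(V-v)/\sigma$ to an $H^2(D)$ function $w_k=\phi_k w_k'$ via the trace theorem on the single affine edge containing $E_k$, and then applying Grisvard's theorem to the resulting homogeneous Neumann problem for $v_k-w_k$; superposition over $k$ together with the interior part $v_0$ finishes the proof. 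Your observation that $\zeta$ vanishes continuously on $\partial E_k$ (so that $\zeta V\in W^{1,\infty}(\partial D)$) is a step in the right direction, but the decisive point is the localization of the datum away from the corners, not its global regularity. Everything else in your proposal is sound.
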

	\begin{proof}
		From \eqref{SCEM}, 
		\begin{equation*}
			\nu\cdot \sigma \nabla v=\zeta(V-v)\;\text{in }H^{-1/2}(\partial D), 
		\end{equation*}
		(we omit the $\by$ dependence of $(v,V)$ and $\sigma$ for conciseness).
		Since $\sigma\in W^{1,\infty}(D)$ and $\sigma^{-}>0$, we have $1/\sigma\in W^{1,\infty}(D)$. It follows that $\frac{1}{\sigma}w\in H^1(D)$ for all $w\in H^1(D)$. Hence, %from the definition of normal derivative via Green formula, 
		\begin{equation*}
			\nu\cdot \nabla v=\frac{\zeta(V-v)}{\sigma}.
		\end{equation*}
		Together with the identity $\nabla\cdot (\sigma \nabla v) = \nabla \sigma \cdot \nabla v + \sigma \Delta v$, we deduce that $v$ satisfies  
		\begin{equation*}
			\begin{cases}
				\Delta v= -\frac{\nabla \sigma \cdot \nabla v}{\sigma}\ \mbox{in } D,\\
				\nu\cdot \nabla v=\frac{\zeta(V-v)}{\sigma}\ \mbox{on }\partial D. 
			\end{cases}
		\end{equation*} 
		As $V-v\in H^1(D)$, by the trace theorem $V-v\in H^{1/2}(\partial D)$. %Since $\zeta\in H^t(\partial D)$ with $t> \frac{1}{2}$, Lemma 2.3 in \cite{Hyvonen2017} implies $\zeta(V-v)\in H^{1/2}(\partial D)$. Since $1/\sigma\in W^{1,\infty}(D)$, we have $\frac{\zeta(V-v)}{\sigma}\in H^{1/2}(\partial D)$. Hence we obtain the following $H^2$-estimate of the above Neumann problem \cite{Lions1972}
 Since $\zeta\in W^{1,\infty}(\partial D)$, $\zeta(V-v)\in H^{1/2}(D)$. Since $D$ is convex, the solution $v_0$ of the problem
\begin{equation*}
			\begin{cases}
				\Delta v_0= -\frac{\nabla \sigma \cdot \nabla v}{\sigma}{\ \mbox{in } D,}\\
				\nu\cdot \nabla v_0=0 {\ \mbox{on }\partial D},
			\end{cases}
		\end{equation*} 
	%	\begin{equation*}
belongs to $H^2(D)/\mathbb{R}$, with
\begin{align*}
	|v_0\|_{H^2(D)/\mathbb{R}}\le C\|\frac{\nabla \sigma \cdot \nabla v}{\sigma}\|_{L^2(D)}
	\le C\frac{||\sigma||_{W^{1,\infty}(D)}||\nabla v||_{L^2(D)}}{\sigma^{-}}\le C|I|,
\end{align*}
which is uniformly bounded with respect to $\by\in U$
 (see, e.g, Grisvard \cite{Grisvard2011} Theorem 3.2.1.3). Let $\zeta_k$ be the restriction of $\zeta$ on $E_k$, $\zeta_k$ is 0 outside $E_k$.  Let $\partial D_k$ be the affine edge/surface of $\partial D$ that contains $E_k$ in $\mathbb{R}^d$, we extend $\zeta_k(V-v)/\sigma$ to 0 outside $E_k$. As $\bar E_k$ belongs to the interior of $\partial D_k$, we can find a function $\phi_k\in C^\infty_0(\mathbb{R}^d)$ such that $\phi_k(x)=1$ for $x\in E_k$ and $(\partial D\setminus\partial D_k)\cap {\rm supp}(\phi_k)=\emptyset$. From the trace theorem, as $\partial D_k$ is linear, we can find a function $w'_k\in H^2(D\cap{\rm supp}(\phi_k))$ such that $w_k'=0$ on $\partial D_k$ and $\nu\cdot\nabla w_k'=\zeta_k(V-v)/\sigma$ on $\partial D_k$ (which is 0 outside $E_k$).  We note that $\|w_k'\|_{H^2(D)}$ is uniformly bounded with respect to $\by$, namely 
\begin{equation*}
	\|w_k'\|_{H^2(D)}\le C|\zeta|_{W^{1,\infty}(\partial D)}||V-v||_{H^1(D)}\le C||(v,V)||_{\mathcal{H}^1}\le C|I|.
\end{equation*}
Let $w_k=\phi_kw_k'$, we find that $w_k\in H^2(D)$ such that on $\partial D_k$, $w_k=0$ and 
\[
\nu\cdot\nabla w_k=\nu\cdot(\nabla\phi_k w_k'+\phi_k\nabla w_k')=\phi_k\zeta_k(V-v)/\sigma
\]
which equals 0 outside $E_k$ and $\zeta_k(V-v)/\sigma$ on $E_k$, as $\phi_k=1$ on $E_k$ and $\zeta_k=0$ outside $E_k$. Further, as other edges/surfaces of $\partial D$ are outside the support of $w_k$, we find that $\nu\cdot\nabla w_k=0$ on other edges/surfaces. We consider the problem
\begin{equation*}
			\begin{cases}
				\Delta v_k= 0{\ \mbox{in } D,}\\
				\nu\cdot \nabla v_k={\zeta_k(V-v)\over\sigma}=\nu\cdot\nabla w_k {\ \mbox{on }\partial D},
			\end{cases}
\end{equation*}
We note that $v_k-w_k$ is the solution of the homogeneous Neunman problem
\begin{equation*}
			\begin{cases}
				\Delta (v_k-w_k)= -\Delta w_k\in L^2(D)\ \mbox{in } D,\\
				\nu\cdot \nabla (v_k-w_k)=0 {\ \mbox{on }\partial D},
			\end{cases}
		\end{equation*} 
 which is uniformly bounded in  $H^2(D)/{\mathbb R}$ as $D$ is convex and $\Delta w_k$ is uniformly bounded in $L^2(D)$ for all $\by$ (Theorem 3.2.1.3 of \cite{Grisvard2011}). By superposition, we obtain the desired result.
\end{proof}
%
%			||v||_{H^2(D)/\mathbb{C}}\le C\left(\left|\left|\frac{\nabla \sigma \cdot \nabla v}{\sigma}\right|\right|_{L^2(D)}+\left|\left| \frac{\zeta(V-v)}{\sigma}\right|\right|_{H^{1/2}(\partial D)}\right). 
%		\end{equation*}
%		We proceed to estimate the terms on the RHS. Since $\sigma\in W^{1,\infty}(D)$ and $\sigma^{-}>0$, we have
%		\begin{equation*}
%			\left|\left|\frac{\nabla \sigma \cdot \nabla v}{\sigma}\right|\right|_{L^2(D)}\le \frac{||\sigma||_{W^{1,\infty}}||\nabla v||_{L^2(D)}}{\sigma^-}. 
%		\end{equation*}
%		By Lemma \ref{v<=(v,V)},  
%		\begin{equation}\label{grad_sigma_L2}
%			\left|\left|\frac{\nabla \sigma \cdot \nabla v}{\sigma}\right|\right|_{L^2(D)}\le \frac{||\sigma||_{W^{1,\infty}}}{\sigma^-}||(v,V)||_{%\mathcal{H}^1}.
%		\end{equation}
%		On the other hand, since $\sigma^{-1}\in W^{1,\infty}(D)$, we get
%		\begin{equation*}
%			\left|\left| \frac{\zeta(V-v)}{\sigma}\right|\right|_{H^{1/2}(\partial D)}\le C||\sigma^{-1}||_{W^{1,\infty}(D)}||\zeta(V-v)||_{H^{1/2}(\partial D)}
%		\end{equation*}
%		From Lemma 2.3 and Lemma 2.4 in \cite{Hyvonen2017},  
%		\begin{align*}
%			||\zeta(V-v)||_{H^{1/2}(\partial D)}&\le C||\zeta||_{H^{s}(\partial D)}||V-v||_{H^{1/2}(\partial D)}\\
%			&\le  C||\zeta||_{H^{s}(\partial D)}||(v,V)||_{\mathcal{H}^1}.
%		\end{align*}
%		This estimate together with \eqref{H1_norm_bound} and \eqref{grad_sigma_L2} yield the desired result. 	\end{proof}
	
	Let $(v^l,V^l)$ be the FE approximation of $(v,V)$ with mesh size $O(2^{-l})$.

	\begin{lemma}
		Under Assumption \ref{assumption:sigma,zeta}, 
		\begin{equation}\label{(v-vl,V-Vl)_bound}
			||(v,V)-(v^l,V^l)||_{\mathcal{H}^1}\le C 2^{-l}.%\max\{\sigma^+,\zeta^+\}\frac{||\sigma||_{W^{1,\infty}(D)}+\sigma^{-}||\zeta||_{H^s(\partial(D))}||\sigma^{-1}||_{W^{1,\infty}(D)}}{\sigma^-\min\{\sigma^-,\zeta^-\}^2}|I|.
		\end{equation}
	\end{lemma}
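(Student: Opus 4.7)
The plan is to run the standard Cé\-a-type argument for a conforming Galerkin method and then bound the best approximation error by a Scott--Zhang (or Cl\'ement) interpolation estimate, using Proposition~\ref{v_H2_bound} to supply the required $H^2$ regularity. First I would observe that the variational problem satisfied by $(v^l,V^l)$ is the Galerkin restriction of \eqref{variational} to the conforming subspace $\mathcal{V}^l\subset\mathcal{H}^1$. Since $B(\,\cdot\,,\,\cdot\,;\sigma)$ is bounded and coercive on $\mathcal{H}^1$, with constants depending only on $\sigma^\pm,\zeta^\pm$ (so in particular uniform in $\by\in U$), Galerkin orthogonality yields the quasi-optimal bound
\begin{equation*}
\|(v,V)-(v^l,V^l)\|_{\mathcal{H}^1}\le C\,\inf_{(w^l,W^l)\in\mathcal{V}^l}\|(v,V)-(w^l,W^l)\|_{\mathcal{H}^1}.
\end{equation*}

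To bound the infimum, I would exploit the structure of $\mathcal{V}^l=(P^l\oplus\mathbb{R}^K)/\mathbb{R}$: the electrode component is not discretised, so I can simply take $W^l:=V$ and all the work goes into choosing $w^l\in P^l$ close to $v$ in $H^1(D)$. Fix a representative of the equivalence class $[(v,V)]$ with, say, zero mean on $D$; then by the Poincar\'e--Wirtinger inequality and Proposition~\ref{v_H2_bound} one has $\|v\|_{H^2(D)}\le C\|v\|_{H^2(D)/\mathbb{R}}\le C$ uniformly in $\by\in U$. Let $\pi^l v\in P^l$ be the Scott--Zhang (or Cl\'ement) quasi-interpolant; the standard interpolation estimate on shape-regular meshes gives
\begin{equation*}
\|v-\pi^l v\|_{H^1(D)}\le C\,h_l\,\|v\|_{H^2(D)}\le C\,2^{-l}.
\end{equation*}

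Combining these two ingredients with the choice $(w^l,W^l)=(\pi^l v,V)\in\mathcal{V}^l$, the definition of the quotient norm with the representative $c=0$ gives
\begin{equation*}
\inf_{(w^l,W^l)\in\mathcal{V}^l}\|(v,V)-(w^l,W^l)\|_{\mathcal{H}^1}\le\|v-\pi^l v\|_{H^1(D)}\le C\,2^{-l},
\end{equation*}
which, chained with the quasi-optimality bound above, yields \eqref{(v-vl,V-Vl)_bound}.

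There is no serious analytical obstacle here; the argument is the textbook Cé\-a plus interpolation. The only points that require some care are bookkeeping around the quotient space: one must verify that picking $W^l=V$ is admissible in $\mathcal{V}^l$ (it is, since $V\in\mathbb{R}^K$ sits in the undivided factor), and one must pass from the $H^2(D)/\mathbb{R}$ bound of Proposition~\ref{v_H2_bound} to a genuine $H^2(D)$ bound on a specific representative in order to apply the Scott--Zhang estimate. Uniformity in $\by$ of the resulting constant $C$ is automatic because the coercivity/continuity constants of $B$ depend only on $\sigma^\pm,\zeta^\pm$ and the $H^2/\mathbb{R}$ bound in Proposition~\ref{v_H2_bound} is already uniform in $\by\in U$.
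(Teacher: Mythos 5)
Your argument is correct and is essentially the paper's own proof: Céa's lemma for the coercive, bounded form $B$, followed by the choice of electrode component $W^l$ matching $V$ (up to the quotient shift) and a standard first-order $H^1$ interpolation estimate for piecewise linears applied to the $H^2$-regular representative supplied by Proposition~\ref{v_H2_bound}. The only cosmetic difference is that you fix the zero-mean representative and invoke Scott--Zhang explicitly, whereas the paper minimises over the constant $c$ and cites the generic approximation property of $P^l$.
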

	\begin{proof}
		By Cea's lemma,  
		\begin{equation*}
			||(v,V)-(v^l,V^l)||_{\mathcal{H}^1}\le C\frac{\max\{\sigma^+,\zeta^+\}}{\min\{\sigma^-,\zeta^-\}}\inf_{(w^l,W^l)\in\mathcal{V}^l}||(v,V)-(w^l,W^l)||_{\mathcal{H}^1}.
		\end{equation*}
		From the definition of $\mathcal{H}^1$-norm,  
		\begin{align*}
			\inf_{(w^l,W^l)\in\mathcal{V}^l}||(v,V)-(w^l,W^l)||_{\mathcal{H}^1} 
			& = \inf_{w^l\in P^l,W^l\in \mathbb{R}^M,c\in \mathbb{R}} \left( ||v-w^l-c||_{H^1(D)}+|V-W^l-c{\bf 1}|\right) \\
			&\le \inf_{w^l\in P^l,c\in \mathbb{R}}||(v-c)-w^l||_{H^1(D)}. 
		\end{align*}
		The desired estimate follows from Proposition \ref{v_H2_bound} and the approximation property of $P^l$, namely, 
		\begin{equation}\label{FE_2^-l}
			\inf_{w^l\in P^l}||(v-c)-w^l||_{H^1(D)}\le C2^{-l}||v-c||_{H^2(D)}.
		\end{equation}
	\end{proof}
	
	\begin{proposition}\label{V-Vl}
		Under Assumption  \ref{assumption:sigma,zeta},  
		\begin{equation*}
			|V-V^l|\le C2^{-2l}.%(\max\{\sigma^+,\zeta^+\})^3C_1(\sigma,\zeta)^2|I|, 
		\end{equation*}
%		where 
%		$C_1(\sigma,\zeta)=\frac{||\sigma||_{W^{1,\infty}(D)}+\sigma^{-}||\zeta||_{H^s(\partial(D))}||\sigma^{-1}||_{W^{1,\infty}(D)}}{\sigma^-\min\{\sigma^-,\zeta^-\}^2}$. 
	\end{proposition}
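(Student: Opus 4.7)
The plan is to apply an Aubin--Nitsche duality argument to upgrade the $\mathcal{H}^1$-energy estimate \eqref{(v-vl,V-Vl)_bound} (which is only $O(2^{-l})$) to the doubled rate $O(2^{-2l})$ for the functional $W\mapsto W\cdot V$. First I would observe that, by picking the representative of $(v^l,V^l)$ so that $V^l_1+\dots+V^l_K=0$ (the same normalization used for $(v,V)$), the difference $V-V^l$ lies in $\mathbb{R}^K_\diamond$, and hence
\[
|V-V^l| = \sup_{W\in \mathbb{R}^K_\diamond,\, |W|=1} (V-V^l)\cdot W.
\]

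Next, for each fixed test vector $W\in \mathbb{R}^K_\diamond$, introduce the dual problem: find $(\tilde v,\tilde V)\in \mathcal{H}^1$ such that
\[
B((w,W'),(\tilde v,\tilde V);\sigma) = W\cdot W' \quad \forall\, (w,W')\in \mathcal{H}^1.
\]
Since $B$ is symmetric in its two $\mathcal{H}^1$-arguments, this is nothing but the original forward problem \eqref{variational} with the current pattern replaced by $W$. Because $W\in\mathbb{R}^K_\diamond$ and $|W|=1$, Proposition \ref{v_H2_bound} applies uniformly in $\by\in U$, yielding a bound on $\|\tilde v\|_{H^2(D)/\mathbb{R}}$ that is independent of $\by$. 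Consequently, the analogue of \eqref{(v-vl,V-Vl)_bound} for the dual problem gives $\|(\tilde v,\tilde V)-(\tilde v^l,\tilde V^l)\|_{\mathcal{H}^1}\le C\,2^{-l}$ for the FE dual approximation $(\tilde v^l,\tilde V^l)\in \mathcal{V}^l$.

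Now take $(w,W')=(v-v^l,V-V^l)$ in the dual equation to obtain
\[
W\cdot(V-V^l) = B((v-v^l,V-V^l),(\tilde v,\tilde V);\sigma),
\]
and subtract the Galerkin orthogonality identity $B((v-v^l,V-V^l),(\tilde v^l,\tilde V^l);\sigma)=0$ (valid since $(\tilde v^l,\tilde V^l)\in \mathcal{V}^l$) to rewrite
\[
W\cdot(V-V^l) = B((v-v^l,V-V^l),(\tilde v-\tilde v^l,\tilde V-\tilde V^l);\sigma).
\]
The continuity of $B$ then gives
\[
|W\cdot(V-V^l)| \le C\max\{\sigma^+,\zeta^+\}\,\|(v,V)-(v^l,V^l)\|_{\mathcal{H}^1}\,\|(\tilde v,\tilde V)-(\tilde v^l,\tilde V^l)\|_{\mathcal{H}^1}\le C\,2^{-2l},
\]
and taking the supremum over unit $W\in\mathbb{R}^K_\diamond$ yields the claim.

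The main subtlety — rather than an obstacle — is verifying that the dual problem genuinely fits the hypotheses of Proposition \ref{v_H2_bound}: one must confirm that $W\in\mathbb{R}^K_\diamond$ plays the role of an admissible current pattern, that the resulting constant in the $H^2$ bound is uniform in $\by\in U$ (which it is, under Assumption \ref{assumption:sigma,zeta}), and that the FE Galerkin orthogonality holds in the quotient space $\mathcal{H}^1$ with the normalization \eqref{sumV=0}. Once this is in place, the argument is a direct duality calculation.
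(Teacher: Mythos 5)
Your proposal is correct and follows essentially the same Aubin--Nitsche duality argument as the paper: the paper likewise introduces the dual problem with current pattern $I^*\in\mathbb{R}^K_\diamond$, $|I^*|=1$, uses Galerkin orthogonality to rewrite $I^*\cdot(V-V^l)$ as $B$ applied to the two error pairs, and invokes the uniform $H^2$ bound of Proposition \ref{v_H2_bound} together with the $O(2^{-l})$ energy estimate for both primal and dual problems. The only cosmetic difference is notation ($W$ versus $I^*$) and that you make the symmetry of $B$ and the admissibility of the dual datum explicit, which the paper leaves implicit.
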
   

	\begin{proof}%[Proof of Proposition \ref{V-Vl}]
		We follow the Aubin-Nitsche duality argument. 
		For each $I^*\in \mathbb{R}^K_\diamond$, let $(v_{I^*},V_{I^*})\in \mathcal{H}^1$ be the solution to the problem
		\begin{equation*}
			B((v_{I^*},V_{I^*}),(w,W))=I^*\cdot W\quad \forall (w,W)\in \mathcal{H}^1
		\end{equation*}
		and $(v_{I^*}^l,V_{I^*}^l) \in \mathcal{V}^L$ the solution to 
		\begin{equation*}
			B((v_{I^*}^l,V_{I^*}^l),(w^l,W^l))=I^*\cdot W^l\quad \forall (w^l,W^l)\in \mathcal{V}^l. 
		\end{equation*}
		We have
		\begin{align*}
			|I^*\cdot (V-V^l)| &=|B((v_{I^*},V_{I^*}),(v-v^l,V-V^l))|\\
			&=|B((v_{I^*}-v_{I^*}^l,V_{I^*}-V_{I^*}^l),(v-v^l,V-V^l))|\\
			&\le C\max\{\sigma^+,\zeta^+\}||(v_{I^*}-v_{I^*}^l,V_{I^*}-V_{I^*}^l)||_{\mathcal{H}^1}||(v-v^l,V-V^l)||_{\mathcal{H}^1}\\
			&\le C2^{-2l} \|v_{I^*}\|_{H^2(D)/{\mathbb R}}\|v\|_{H^2(D)/{\mathbb R}}%(\max\{\sigma^+,\zeta^+\})^3C_1(\sigma,\zeta)^2|I^*|\times|I|,
		\end{align*}
		where the third line follows from the boundedness of the bilinear form $B$ and the fourth line follows from \eqref{(v-vl,V-Vl)_bound}. 	
 When $|I^*|=1$, from the proof Proposition \ref{v_H2_bound}, $\|v_{I^*}\|_{H^2(D)/{\mathbb R}}$ is uniformly bounded with respect to $I^*$.	Since $\mathbb{R}^K_\diamond$ is a closed subspace of $\mathbb{R}^K$, we obtain  
		\begin{equation*}
			|V-V^l|=\max_{{ I^*}\in \mathbb{R}^K_\diamond, |I^*|=1}|I^*\cdot (V-V^l)|\le C2^{-2l}.%(\max\{\sigma^+,\zeta^+\})^3C_1(\sigma,\zeta)^2|I|.
		\end{equation*}
	\end{proof}

	\section{Smoothened CEM with complex-valued conductivity}\label{appendix:complex_SCEM} 
To establish the convergence rate in Proposition \ref{best_N_existence}, following \cite{Chkifa2013}, we extend the range of the parameters to the complex plane, and consider a complex smoothened CEM problem. CEM problems with a complex-valued conductivity $\sigma$ have been studied extensive (see, e.g., \cite{Hyvonen2017} and references there in). %, we consider the model \eqref{SCEM} where $\sigma$ is complex-valued. 
  Our aim is to establish the analyticity of the solution, and bound the coefficients %We need this extension to establish a bound for the coefficient 
of the Taylor expansion of the solution of the complex parametric problem in Appendix \ref{section:error_estimate}. %We look for solutions in the space 
 The solution belongs to the space
	\begin{equation*}
		\mathcal{H}^1_c:=(H^1(D;\mathbb{C})\oplus \mathbb{C}^K)/\mathbb{C}. 
	\end{equation*}
	The corresponding variational formulation is 
	\begin{equation}\label{variational_complex}
		B((v,V),(w,W);\sigma)=I\cdot \overline{W}\quad \forall\;(w,W)\in \mathcal{H}^1_c, 
	\end{equation}
	where the sesquilinear form $B:\mathcal{H}^1_c\times \mathcal{H}^1_c\to \mathbb{C}$ is defined as 
	\begin{equation*}
		B((v,V),(w,W);\sigma)=\int_D \sigma \nabla v\cdot \nabla \overline{w} dx+\int_{\partial D}\zeta (V-v)(\overline{W}-\overline{w})dS.
	\end{equation*} 
	We denote by $\mathcal{A}(D)$ the class of $\sigma\in L^\infty(D;\mathbb{C})$ such that there exist $\sigma^+,\sigma^->0$ satisfying 
	\begin{equation}\label{A(D)}
		\sigma^-\le \operatorname{Re}(\sigma(x))\le |\sigma(x)|\le \sigma^+\; \forall x\in D. 
	\end{equation}
	To simplify the notation, we use $\mathcal{H}^1$  for $\mathcal{H}^1_c$. It should not be confused with the real vector space counterpart. 
	From Lemma 2.1 in \cite{Hyvonen2017}, provided that \eqref{A(D)} holds,  the sesquilinear form $B$ is bounded and coercive with 
	\begin{equation*}
		\operatorname{Re}B((v,V),(v,V))\ge C\min\{\sigma^-,\zeta^-\}||(v,V)||_{\mathcal{H}^1}. 
	\end{equation*}
%	where, as in Appendix \ref{appendix:regularity}, we denote by $C$ a generic constant depending only on the domain $D$ and the electrodes. 
	In the quotient space $\mathcal{H}^1$, we choose the representative $(v,V)$ such that $V$ belongs to 
	\begin{equation*}
		\mathbb{C}^K_{\diamond}:= \{U\in \mathbb{C}^K: U_1+\dots+U_K=0\}. 
	\end{equation*}  
	As in Lemmas \ref{V<=(v,V)} and \ref{v<=(v,V)}, we can show that $|V|\le ||(v,V)||_{\mathcal{H}^1}$ and $||\nabla v||_{L^2(D)}\le ||(v,V)||_{\mathcal{H}^1}$. Hence 
	\begin{equation*}
		|V|\le C\frac{|I|}{\min \{\sigma^-,\zeta^-\}}.  
	\end{equation*} 
	The piecewise linear FE approximation  $(v^l,V^l)$ with mesh size $O(2^{-l})$ belongs to the space $\mathcal{V}^l_C:=(P^l\oplus \mathbb{C}^K)/\mathbb{C}$. We have 
	\begin{equation}\label{Vl_bounded}
		|V^l|\le C\frac{|I|}{\min \{\sigma^-,\zeta^-\}}.
	\end{equation}

 We now show the continuous  dependence of the solution to the forward problem on the conductivity.	 
	\begin{lemma}\label{M1,2} 
%		Let $\sigma\in \mathcal{A}(D)$ and $(\sigma_\epsilon)_{\epsilon \in \mathbb{C}}\subset \mathcal{A}(D)$ such that $\lim_{\epsilon\to 0} ||\sigma_\epsilon-\sigma||_{L^\infty(D)}=0$. Then 
 Let $\sigma_1,\sigma_2$ belong to $\mathcal{A}(D)$ with bounds $\sigma^{-}$ and $\sigma^{+}$ in \eqref{A(D)}. We then have
		\begin{equation*}
			||(v(\sigma_1)-v(\sigma_2),V(\sigma_1)-V(\sigma_2))||_{\mathcal{H}^1}\le C\|\sigma_1-\sigma_2\|_{L^\infty(D)},
	\end{equation*}
where $C$ depends on $\sigma^-$ and $\sigma^+$. 

	\end{lemma}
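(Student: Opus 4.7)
The plan is to exploit the variational formulation \eqref{variational_complex}: write the equations for $(v_1,V_1):=(v(\sigma_1),V(\sigma_1))$ and $(v_2,V_2):=(v(\sigma_2),V(\sigma_2))$, subtract them, test against the difference, and apply coercivity together with the a priori bound on $(v_2,V_2)$ already noted for the complex problem.

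Concretely, I would first compute
\[
B((v_1-v_2,V_1-V_2),(w,W);\sigma_1)=-\int_D (\sigma_1-\sigma_2)\nabla v_2\cdot\nabla\overline{w}\,dx \quad \forall (w,W)\in \mathcal{H}^1,
\]
which follows from subtracting the two variational identities $B((v_i,V_i),(w,W);\sigma_i)=I\cdot\overline{W}$ and noting that the boundary part of $B$ does not depend on $\sigma$. Next, I would choose $(w,W)=(v_1-v_2,V_1-V_2)$ as a test function. By the coercivity estimate for the sesquilinear form $B$ (with constant $C\min\{\sigma^-,\zeta^-\}$, where $\sigma^-$ is the common lower bound assumed for $\sigma_1,\sigma_2\in\mathcal{A}(D)$), the real part of the left-hand side controls $\|(v_1-v_2,V_1-V_2)\|_{\mathcal{H}^1}^2$.

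On the right-hand side, I would bound
\[
\bigl|\int_D (\sigma_1-\sigma_2)\nabla v_2\cdot\nabla\overline{(v_1-v_2)}\,dx\bigr|\le \|\sigma_1-\sigma_2\|_{L^\infty(D)}\,\|\nabla v_2\|_{L^2(D)}\,\|\nabla(v_1-v_2)\|_{L^2(D)}
\]
by Cauchy–Schwarz. Using the analogue of Lemma \ref{v<=(v,V)} for the complex space, $\|\nabla(v_1-v_2)\|_{L^2(D)}\le \|(v_1-v_2,V_1-V_2)\|_{\mathcal{H}^1}$, and similarly $\|\nabla v_2\|_{L^2(D)}\le \|(v_2,V_2)\|_{\mathcal{H}^1}$, which is itself bounded by $C|I|/\min\{\sigma^-,\zeta^-\}$ from the coercivity-based a priori estimate that yields \eqref{H1_norm_bound} (restated for the complex case right before \eqref{Vl_bounded}). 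Combining these, dividing through by $\|(v_1-v_2,V_1-V_2)\|_{\mathcal{H}^1}$, and absorbing the prefactors into a constant $C$ depending only on $\sigma^-,\sigma^+,\zeta^-$ (and the fixed $\zeta,I$), gives the claimed inequality.

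There is no real obstacle: this is the standard perturbation argument for coercive sesquilinear forms depending on a coefficient, and the only minor care required is to confirm that the coercivity and a priori bound stated earlier for the real-valued $B$ transfer to the complex version (which the paper already notes by citing Lemma 2.1 of \cite{Hyvonen2017}) so that testing with the complex conjugate difference is legitimate.
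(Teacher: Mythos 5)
Your proposal is correct and follows essentially the same route as the paper: subtract the two variational identities, test with the difference $(v_1-v_2,V_1-V_2)$, take the real part to invoke coercivity of the complex sesquilinear form, bound the perturbation term by Cauchy--Schwarz, and close with the a priori bound $\|(v_2,V_2)\|_{\mathcal{H}^1}\le C|I|/\min\{\sigma^-,\zeta^-\}$. The only cosmetic difference is that the paper spells out the coercivity step via estimate (9) of \cite{Hyvonen2017} rather than citing it abstractly.
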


	%{\todo avoid using $h$ as we normally use it to denote the mesh size} \textcolor{blue}{$h$ is changed to $\epsilon$}
	\begin{proof} %[\textbf{Proof of Lemma \ref{M1,2}}]
		For any $\sigma\in\mathcal{A}(D)$, denote $\mathcal{M}(\sigma):=((v(\sigma),V(\sigma)))$. 
 We show that for $\sigma_1,\sigma_2\in \mathcal{A}(D)$, 
		\begin{equation}\label{M1-M2}
			||\mathcal{M}(\sigma_1)-\mathcal{M}(\sigma_2)||_{\mathcal{H}^1}\le C\frac{||\sigma_1-\sigma_2||_{L^\infty(D)}||\mathcal{M}(\sigma_2)||_{\mathcal{H}^1}}{\min\{\inf_{x\in D}\operatorname{Re}(\sigma_1(x)),\zeta^-\}}.
		\end{equation} 
		Denote $(v_j, V_j)=(v(\sigma_j),V(\sigma_j)), j=1,2$. Since for all $(w,W)\in \mathcal{H}^1$, 
		\begin{equation*}
			B((v_1, V_1),(w,W);\sigma_1)=I\cdot \overline{W}=B((v_2,V_2),(w,W);\sigma_2),
		\end{equation*}
		we have 
		\begin{align*}
			0&=B((v_1,V_1),(w,W);\sigma_1))-B((v_2,V_2),(w,W);\sigma_1)+B((v_2,V_2),(w,W);\sigma_1)-B((v_2,V_2),(w,W);\sigma_2)\\
			&=\int_D \sigma_1 \nabla (v_1-v_2) \cdot \nabla \overline{w}dx+\int_{\partial D}\zeta((V_1-V_2)-(v_1-v_2))(\overline{W}-\overline{w})dS
			+\int_D (\sigma_1-\sigma_2) \nabla v_2 \cdot \nabla \overline{w}dx.
		\end{align*}
		Let $(w,W)=(v_1-v_2,V_1-V_2)$. Then
		\begin{equation*}
			\int_D \operatorname{Re}(\sigma_1) |\nabla (v_1-v_2)|^2dx+\int_{\partial D} \zeta |(V_1-V_2)-(v_1-v_2)|^2dS\le \int_D |\sigma_1-\sigma_2||\nabla v_2\cdot \nabla (\overline{v_1-v_2})|dx.
		\end{equation*}
		From estimate (9) in \cite{Hyvonen2017}, we deduce  
		\begin{equation*}
			||(v_1-v_2,V_1-V_2)||_{\mathcal{H}^1}^2\le C(||\nabla (v_1-v_2)||^2_{L^2(D)}+||(V_1-V_2)-(v_1-v_2)||_{L^2(\partial D)}).
		\end{equation*}
		From $\eqref{zeta_lower_bound}$ and the assumption that $\operatorname{Re}(\sigma_1)$ is bounded below, 
		\begin{equation*}
			||(v_1-v_2,V_1-V_2)||_{\mathcal{H}^1}^2\le C_{\sigma_1}\left( \int_D \operatorname{Re}(\sigma_1) |\nabla (v_1-v_2)|^2dx+\int_{\partial D}\zeta ((V_1-V_2)-(v_1-v_2))^2dS\right),
		\end{equation*}
		where $C_{\sigma_1}=\frac{C}{\min\{\inf_{x\in D}\operatorname{Re}(\sigma_1(x)),\zeta^-\}}$. 
		Hence 
		\begin{align*}
			||(v_1-v_2,V_1-V_2)||_{\mathcal{H}^1}^2&\le C_{\sigma_1}\left(\int_D|\sigma_1-\sigma_2|^2|\nabla v_2|^2dx\right)^{1/2}||\nabla (v_1-v_2)||_{L^2(D)}\\
			&\le C_{\sigma_1}||\sigma_1-\sigma_2||_{L^\infty(D)}||\nabla v_2||_{L^2(D)}||(v_1-v_2,V_1-V_2)||_{\mathcal{H}^1}\\
			&\le C_{\sigma_1}||\sigma_1-\sigma_2||_{L^\infty(D)}||(v_2,V_2)||_{\mathcal{H}^1}||(v_1-v_2,V_1-V_2)||_{\mathcal{H}^1}.
		\end{align*}
		We then get \eqref{M1-M2}.  From the uniform boundedness of $\|\mathcal{M}(\sigma)\|_{\mathcal{H}^1}$  for all $\sigma \in A(D)$, we get the conclusion.
		
%		Since $\lim_{\epsilon\to 0} ||\sigma_\epsilon - \sigma||_{L^\infty(D)}=0$ and $\inf_{x\in D} \operatorname{Re}(\sigma(x))>0$, there exists a constant $c>0$ such that for $|\epsilon|$ small enough, 
%		\begin{equation*}
%			\operatorname{Re}(\sigma_\epsilon)\ge c\text{ a.e.} 
%		\end{equation*}
%		Taking $\sigma_1 = \sigma_\epsilon$, $\sigma_2 = \sigma$ in \eqref{M1-M2}, we get the desired result. 
		
	\end{proof}

	\section{Error estimates}\label{section:error_estimate}
	\begin{comment}
		To simplify the analysis, we only consider the affine parametrization \eqref{affine_para}, i.e., 
		\begin{equation*}
			\sigma^J(x,y)=\bar{\sigma}(x)+\sum_{j=1}^{J}y_j\psi_j(x).
		\end{equation*} 
		Throughout the remainder of the appendix, we denote $U_J:=[-1,1]^J$.  
	\end{comment}
	
	\subsection{Truncation error}
	
	\begin{proof}[\textbf{Proof of Proposition \ref{mu-muJ}}]
	
		The normalizing constants of $\mu^\delta$ and $\mu^{J,\delta}$ in \eqref{eq:mudelta} and \eqref{eq:muJ} are 
		\begin{equation*}
			Z(\delta):=\int_{U} \exp(-\Phi(\by;\delta)) d\mu_0(\by);\quad Z^{J}(\delta):=\int_{U} \exp(-\Phi^{J}(\by;\delta)) d\mu_0(\by).
		\end{equation*}
		From the definition of Hellinger distance, 
		\begin{align*}
			2d_{Hell}(\mu^{\delta},\mu^{J,\delta})^2
			&= \int_{U} \left(Z(\delta)^{-1/2}\exp(-\frac{1}{2}\Phi(y;\delta))-(Z^{J}(\delta))^{-1/2}\exp(-\frac{1}{2}\Phi^{J}(y;\delta))\right)^2d\mu_0(\by)\\
			&\le I_1+I_2, 
		\end{align*}
		where 
		\begin{align*}
			I_1 & = \frac{1}{Z(\delta)}\int_{U} \left(\exp(-\frac{1}{2}\Phi(\by;\delta))-\exp(-\frac{1}{2}\Phi^{J}(\by;\delta))  \right)^2d\mu_0(\by)\\
			I_2 & = |Z(\delta)^{-1/2}-Z^{J}(\delta)^{-1/2}|^2 Z^{J}(\delta).
		\end{align*}
		 From \eqref{G_bounded} and \eqref{GJ_bounded}, $Z(\delta)$ and $Z^{J}(\delta)$ are uniformly bounded below by a positive constant. Hence
		\begin{align*}
			I_2&\le C|Z(\delta)-Z^{J}(\delta)|^2\\
			&\le C\int_{U} |\exp(-\Phi(\by;\delta))-\exp(-\Phi^{J}(\by;\delta)|^2d\mu_0(\by).
		\end{align*}
		By the inequality $|e^{-a}-e^{-b}|\le |a-b|$ for $a,b\ge 0$, we deduce 
		\begin{equation*}
			I_1,I_2\le C\left(\sup_{\by\in U}|\Phi(\by;\delta)-\Phi^{J}(\by;\delta)|\right)^2.
		\end{equation*}
		Together with \eqref{phi-phi_J}, we arrive at the conclusion. 
	\end{proof}

	\subsection{Interpolation error}
	The idea of using analyticity to obtain convergence rates of polynomial approximation of parametric PDEs has been exploited in \cite{Cohen2011} and applied to interpolation in \cite{Chkifa2013, Chkifa2015}. As the results in these papers are formulated for elliptic PDEs with Dirichlet boundary conditions, minor modifications are needed to apply to EIT. %In addition, as the forward map in our situation is finite-dimensional, some technicalities of infinite-dimensional analysis can be omitted.  

	We begin with the analyticity of the parametric forward solutions. The parametrization of $\sigma$ can be extended to the complex plane by defining 
	\begin{equation*}
		\sigma^J(x,\xi):=\bar{\sigma}(x)+\sum_{j=1}^{J}\xi_j\psi_j(x), \quad \xi\in \mathbb{C}^J. 
	\end{equation*}
	For any sequence of positive numbers $\rho=(\rho_1,\dots,\rho_J)$, let 
	$\mathcal{D}^J_\rho:=\{\xi\in \mathbb{C}^J: |\xi_j|\le \rho_j\; \forall j \}$. We note that $\mathcal{D}^J_\rho$ contains $U_J=[-1,1]^J$ when $\rho_j>1\;\forall j$. 
	If 
	\begin{equation}\label{admissible_rho}
		\sum_{j=1}^J \rho_j|\psi_j(x)|\le \bar{\sigma}(x)-\sigma^-/2, 
	\end{equation}
	then the polydisc $\mathcal{D}^J_\rho$ is contained in the set 
	\begin{equation*}
		\mathcal{A}_J:=\{\xi\in \mathbb{C}^J: 	\sigma^-/2\le \operatorname{Re}(\sigma^J(x,\xi))\le|\sigma^J(x,\xi)|\le  2\sigma^+ \; \forall x\in D \}.
	\end{equation*}
	It follows from Appendix \ref{appendix:complex_SCEM} that $\mathcal{G}^{J,l}(z)$ is well-defined for all $\xi\in \mathcal{A}_J$. 
	
	\begin{lemma}
		The function $\xi\mapsto \mathcal{G}^{J,l}(\xi)$ is holomorphic on $\mathcal{A}_J$.
	\end{lemma}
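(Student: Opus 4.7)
The plan is to reduce the claim to holomorphy of the solution of a finite-dimensional linear system whose coefficient matrix depends affinely (hence holomorphically) on $\xi$ and is invertible throughout $\mathcal{A}_J$. First I would fix a basis $\{(w_i^l,W_i^l)\}_{i=1}^{d}$ of the complex-extended finite element space $\mathcal{V}^l_{\mathbb{C}}=(P^l\oplus\mathbb{C}^K)/\mathbb{C}$. Because $\sigma^J(x,\xi)=\bar\sigma(x)+\sum_{j=1}^J\xi_j\psi_j(x)$ is affine in $\xi$ for every fixed $x$, each entry of the stiffness matrix
\[
A_{ij}(\xi):=B((w_i^l,W_i^l),(w_j^l,W_j^l);\sigma^J(\xi))=\int_D\sigma^J(\cdot,\xi)\nabla w_i^l\cdot\nabla\overline{w_j^l}\,dx+\int_{\partial D}\zeta(W_i^l-w_i^l)(\overline{W_j^l-w_j^l})\,dS
\]
is an affine, and therefore entire, function of $\xi\in\mathbb{C}^J$. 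The load vector $b^{(k)}\in\mathbb{C}^d$ with entries $I^{(k)}\cdot\overline{W_j^l}$ is independent of $\xi$.

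Next I would show that $A(\xi)$ is invertible for every $\xi\in\mathcal{A}_J$. By the definition of $\mathcal{A}_J$, $\sigma^J(\cdot,\xi)$ satisfies \eqref{A(D)} with constants $\sigma^-/2$ and $2\sigma^+$; by the coercivity estimate recalled in Appendix \ref{appendix:complex_SCEM} (from Lemma 2.1 of \cite{Hyvonen2017}), there holds $\operatorname{Re} B((v,V),(v,V);\sigma^J(\xi))\ge C\min\{\sigma^-/2,\zeta^-\}\|(v,V)\|_{\mathcal{H}^1}^2$ for every $(v,V)\in\mathcal{V}^l_{\mathbb{C}}\subset\mathcal{H}^1_c$. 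This coercivity immediately yields injectivity, and hence invertibility, of $A(\xi)$ on the finite-dimensional space $\mathcal{V}^l_{\mathbb{C}}$.

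Having a holomorphic, everywhere-invertible family $\xi\mapsto A(\xi)$ on $\mathcal{A}_J$, I would invoke Cramer's rule: each entry of $A(\xi)^{-1}$ equals a cofactor of $A(\xi)$ divided by $\det A(\xi)$. The cofactors are polynomials in the entries of $A(\xi)$, hence holomorphic in $\xi$, and $\det A(\xi)\neq 0$ on $\mathcal{A}_J$, so the quotient is holomorphic on $\mathcal{A}_J$. Consequently the coordinate vector $x^{(k)}(\xi)=A(\xi)^{-1}b^{(k)}$ depends holomorphically on $\xi\in\mathcal{A}_J$. Since the voltage components $V^{J,l,(k)}(\xi)$ are fixed linear combinations of the entries of $x^{(k)}(\xi)$ (those coordinates in the $\mathbb{C}^K$ part of the chosen basis, adjusted by the representative condition \eqref{sumV=0}), and $\mathcal{G}^{J,l}(\xi)$ is the concatenation of these over $k=1,\ldots,K-1$, the map $\xi\mapsto\mathcal{G}^{J,l}(\xi)$ is holomorphic on $\mathcal{A}_J$.

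The only subtlety that requires care is the quotient structure of $\mathcal{V}^l_{\mathbb{C}}$ (to ensure $A(\xi)$ is genuinely a square invertible matrix on a chosen set of representatives) and the enforcement of $\sum_k V_k=0$, but in finite dimensions these are linear bookkeeping operations that preserve holomorphy. I do not anticipate a substantial obstacle; the argument is essentially the same as in \cite{Chkifa2013} for elliptic problems, with the sesquilinear form and coercivity estimate of \cite{Hyvonen2017} replacing the standard elliptic ones.
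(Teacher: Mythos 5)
Your proof is correct, but it takes a genuinely different route from the paper's. The paper verifies complex differentiability directly: it forms the difference quotient $(v_h,V_h)=h^{-1}\bigl((v,V)(\xi+he_j)-(v,V)(\xi)\bigr)$, shows by subtracting the two variational identities that it solves an auxiliary discrete variational problem with right-hand side $-\int_D\psi_j\,\nabla v(\cdot,\xi+he_j)\cdot\nabla\overline{w^l}\,dx$, and then passes to the limit $h\to 0$ using the continuous dependence of the solution on the conductivity (Lemma \ref{M1,2}) to identify $\partial_{\xi_j}V$ with the solution of the sensitivity equation. Your argument instead exploits the finite-dimensionality of $\mathcal{V}^l$: with a real basis the stiffness matrix is affine (hence entire) in $\xi$, coercivity of the sesquilinear form on $\mathcal{A}_J$ gives invertibility, and Cramer's rule yields holomorphy of $A(\xi)^{-1}b$; this is sound, and the two subtleties you flag (the quotient structure and the normalization \eqref{sumV=0}) are indeed only linear bookkeeping. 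The trade-off is that your argument is more elementary and self-contained for the discrete map $\mathcal{G}^{J,l}$ --- which is all the lemma asserts --- but it does not extend to the undiscretized parametric solution, whereas the paper's difference-quotient/sensitivity-equation argument (the standard device in \cite{Chkifa2013}) works verbatim in the infinite-dimensional setting and additionally produces an explicit formula for the derivative, which is the object one estimates when bounding Taylor coefficients. Since Lemma \ref{t_nu_bound} in the paper bounds those coefficients by Cauchy's integral formula rather than through the derivative formula, nothing downstream is lost by your approach.
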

	\begin{proof}
		It suffices to show that for any $k=1,\dots,K-1$, the map $\xi\mapsto V^{l,(k)}(\sigma^J(\cdot,\xi))$  for the FE solution admits complex partial derivatives at every point in $\mathcal{A}_J$. 
		To ease notation, we drop the superscript $(l,k)$ and denote each of these maps by $V$ and their corresponding potential by $v$. Let $\{e_1,\dots,e_J\}$ be the canonical basis in $\mathbb{R}^J$. Fix $j\in \{1,\dots,J\}$ and $\xi\in \mathcal{A}_J$. For $h\in \mathbb{C}\backslash \{0\}$ with $|h|$ small enough such that $(v(\xi+he_j),V(\xi+he_j))$ is well-defined, let 
		\begin{align*}
			v_h(\xi)&:=\frac{v(\xi+he_j)-v(\xi)}{h};\\ V_h(\xi)&:=\frac{V(\xi+he_j)-V(\xi)}{h}.
		\end{align*}  
		Note that $(v(\xi),V(\xi))$ and $(v(\xi+he_j),V(\xi+he_j))$ satisfy the variational problems 
		\begin{equation*}
			B((v(\xi),V(\xi)),(w^l,W^l);\sigma^J(\xi)) = I\cdot W^l \quad \forall (w^l,W^l)\in \mathcal{V}^l
		\end{equation*}
		and 
		\begin{equation*}
			B((v(\xi),V(\xi+he_j)),(w^l,W^l);\sigma^J(\xi+he_j)) = I\cdot W^l \quad \forall (w^l,W^l)\in \mathcal{V}^l.
		\end{equation*}
		Subtracting these equalities, we find that for all $(w^l,W^l)\in \mathcal{V}^l$, 
		\begin{align*}
			0
			&=\int_D [\sigma(x,\xi+he_j)-\sigma(x,\xi)]\nabla v(\xi+he_j)\cdot \nabla \overline{w^l}(x) dx+\int_D \sigma(x,\xi)[\nabla v(\xi+he_j)-\nabla v(\xi)]\cdot \nabla \overline{w^l}(x) \\
			&+\int_{\partial D}\zeta[V(\xi+he_j)-V(\xi)-v(\xi+he_j)+v(\xi)]\overline{W^l-w^l}dS\\
			&=\int_Dh\psi_j(x) \nabla v(x,\xi+he_j)\cdot\nabla \overline{w^{l}}(x) dx+\int_D \sigma(x,\xi)h\nabla v_h(\xi)\cdot \nabla \overline{w^l}(x)dx\\
			&+\int_{\partial D}\zeta h[V_h(\xi)-v_h(\xi)]\overline{W^l-w^l}dS\\
			&=hB((v_h,V_h),(w^l,W^l);\sigma(\xi))+h\int_D\psi_j(x) \nabla v(x,\xi+he_j)\cdot\nabla \overline{w^l}(x) dx.
		\end{align*}
		It follows that $(v_h,V_h)$ satisfies 
		\begin{equation}\label{(vh,Vh)quotient}
			B((v_h,V_h),(w^l,W^l);\sigma(\xi))=-\int_D\psi_j(x) \nabla v(x,\xi+he_j)\cdot\nabla \overline{w^l}(x) dx. 
		\end{equation}
		For each $h\in \mathbb{C}$ and $(w,W)\in \mathcal{H}^1$, let 
		\begin{equation*}
			L_h((w,W)):=-\int_D\psi_j(x) \nabla v(x,\xi+he_j)\cdot\nabla \overline{w^l}(x) dx. 
		\end{equation*} 
		Then $L_h$ is a bounded linear functional  on $\mathcal{H}^1$ because 
		\begin{equation*}
			|L_h((w,W))|\le ||\psi_j||_{L^\infty(D)} ||v(\cdot,\xi+he_j)||_{H^1(D)}||w||_{H^1(D)}.
		\end{equation*}
		Let $(v_0,V_0)$ be the unique solution to the variational problem
		\begin{equation*}
			B((v_0,V_0),(w^l,W^l);\sigma(z))=L_0((w^l,W^l))\;\forall (w^l,W^l)\in \mathcal{V}^l.
		\end{equation*}
		For all $(w,w)\in \mathcal{H}^1$, we have 
		\begin{align*}
			|L_h((w,W))-L_0((w,W))|&=|\int_D \psi_j(x)[\nabla v(x,\xi+he_j)-v(x,\xi)]\cdot \nabla \overline{w}(x)dx|\\
			&\le ||\psi_j||_{L^\infty(D)}||\nabla [v(\xi+he_j)-v(\xi)]||_{L^2(D)}||\nabla w||_{L^2(D)}.
		\end{align*}
		From Lemma \ref{v<=(v,V)} and  Lemma \ref{M1,2}, we deduce 
		\begin{equation*}
			\lim_{h\to 0}||\nabla [v(\xi+he_j)-v(z)]||_{L^2(D)}=0.
		\end{equation*}
		Since $L_h((w,W))\to L_0((w,W))$ for every $(w,W)\in \mathcal{H}^1$, it follows that $(v_h,V_h)$ converges to  $(v_0,V_0)$ in $\mathcal{H}^1$.  
		From Lemma \ref{V<=(v,V)}, we have $\lim_{h\to 0}|V_h- V_0|= 0$. Therefore $\partial_{z_j}V(z)=V_0$.
	\end{proof}

	\begin{lemma}\label{unconditional_Taylor}
		The Taylor series $\sum_{\nu\in \mathbb{N}_0^J}t_{\nu,J,l}y_1^{\nu_1}\dots y_J^{\nu_J}$ converges unconditionally to $\mathcal{G}^{J,l}$ on $U_J$, where 
		\begin{equation*}
			t_{\nu,J,l}:=\frac{1}{\nu!}\partial^{\nu}\mathcal{G}^{J,l}(0).
		\end{equation*} 
	\end{lemma}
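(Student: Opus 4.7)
The plan is to use the previous lemma, which establishes that $\mathcal{G}^{J,l}$ is holomorphic on $\mathcal{A}_J$, together with Cauchy's integral formula on a suitable closed polydisc that strictly contains $[-1,1]^J$. From this we extract the standard Cauchy-type bounds on the Taylor coefficients $t_{\nu,J,l}$, which in turn imply absolute, and hence unconditional, convergence of the Taylor series on $U_J$.

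First I would exhibit a radius vector $\rho=(\rho_1,\dots,\rho_J)$ with $\rho_j>1$ for every $j$ such that the closed polydisc $\mathcal{D}_\rho^J$ satisfies the admissibility condition \eqref{admissible_rho} and is therefore contained in $\mathcal{A}_J$. Concretely, set $\rho_j = 1 + t$ for a small $t>0$ to be chosen. By \eqref{sum_psi}, $\sum_{j=1}^J |\psi_j(x)| \le \bar{\sigma}(x)-\sigma^-$ for every $x\in D$, and since $\bar\sigma$ is bounded above by $\sigma^+$, the choice $t \le \sigma^-/(2(\sigma^+-\sigma^-))$ gives
\[
\sum_{j=1}^{J}\rho_j|\psi_j(x)| \le (1+t)(\bar\sigma(x)-\sigma^-)\le \bar\sigma(x) - \sigma^-/2,
\]
which is \eqref{admissible_rho}. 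Hence $\mathcal{D}_\rho^J \subset \mathcal{A}_J$.

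Next, since $\mathcal{G}^{J,l}$ is holomorphic on $\mathcal{A}_J$ and continuous on the compact set $\mathcal{D}_\rho^J$, the quantity $M := \sup_{\xi \in \mathcal{D}_\rho^J}|\mathcal{G}^{J,l}(\xi)|_\Sigma$ is finite (uniform boundedness also follows from \eqref{Vl_bounded} applied componentwise with $\min\{\sigma^-/2,\zeta^-\}$ in the denominator). Applying the multivariate Cauchy integral formula on the distinguished boundary of $\mathcal{D}_\rho^J$ to each component of $\mathcal{G}^{J,l}$, I obtain
\[
|t_{\nu,J,l}| \le M\,\rho^{-\nu}, \qquad \nu \in \mathbb{N}_0^J.
\]

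Finally, for every $\by \in U_J$ one has $|\by^\nu| \le 1$, so
\[
\sum_{\nu\in \mathbb{N}_0^J}|t_{\nu,J,l}\by^\nu| \le M\sum_{\nu\in \mathbb{N}_0^J}\rho^{-\nu} = M\prod_{j=1}^{J}\frac{1}{1-\rho_j^{-1}} < \infty.
\]
Thus the series converges absolutely, uniformly in $\by\in U_J$; in $\mathbb{R}^{K(K-1)}$ absolute convergence implies unconditional convergence, and the holomorphy of $\mathcal{G}^{J,l}$ on a neighborhood of $U_J$ identifies the limit with $\mathcal{G}^{J,l}(\by)$. The only delicate step is the first one, namely ensuring that one can dilate the unit polydisc to radius strictly greater than $1$ while still staying in $\mathcal{A}_J$; this is where the slack built into the strict inequality in \eqref{sum_psi} (combined with finiteness of $J$) is used.
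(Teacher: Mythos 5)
Your proof is correct and follows essentially the same route as the paper's: dilate the unit polydisc to a $\mathcal{D}_\rho^J$ with all $\rho_j>1$ still satisfying \eqref{admissible_rho} (the paper asserts the existence of such a $\rho$ from \eqref{sum_psi}; you make the choice $\rho_j=1+t$ explicit), and then invoke holomorphy on the larger polydisc to get unconditional convergence on $U_J$, which you spell out via Cauchy coefficient bounds and a geometric series. The only quibble is your closing remark attributing the slack to a ``strict inequality in \eqref{sum_psi}''; the slack actually comes from \eqref{admissible_rho} requiring only $\bar\sigma(x)-\sigma^-/2$ rather than $\bar\sigma(x)-\sigma^-$, which is exactly what your computation uses.
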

	\begin{proof}
		From \eqref{sum_psi} we have  
		\begin{equation*}
			\sum_{j=1}^J |\psi_j(x)|\le \bar{\sigma}(x)-\sigma^-. 
		\end{equation*}
		Hence there exists a sequence $\rho\in \mathbb{R}^J$ such that $\rho_j>1$ for all $j$ and \eqref{admissible_rho} holds. This implies that the Taylor series of $\mathcal{G}^{J,l}(\xi)$ converges unconditionally for all $\xi$ in the interior of $\mathcal{D}^J_\rho$, which contains $U_J$. 
	\end{proof}

	\begin{lemma}\label{t_nu_bound} 
		For any positive sequence $\rho\in \mathbb{R}^J$ satisfying  \eqref{admissible_rho}, there is a constant $C$ independent of $\nu,J,l$ such that 
		\begin{equation*}
			|t_{\nu,J,l}|\le C\rho^{-\nu}.
		\end{equation*}
	\end{lemma}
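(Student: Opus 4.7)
The plan is to apply the multivariate Cauchy integral formula on the polydisc $\mathcal{D}^J_\rho$. Since $\rho$ satisfies \eqref{admissible_rho}, we have $\mathcal{D}^J_\rho\subset\mathcal{A}_J$, so by the previous lemma $\mathcal{G}^{J,l}$ is holomorphic on a neighbourhood of $\mathcal{D}^J_\rho$. The Cauchy formula in several complex variables then yields
\begin{equation*}
t_{\nu,J,l}=\frac{1}{\nu!}\partial^\nu\mathcal{G}^{J,l}(0)=\frac{1}{(2\pi i)^J}\oint_{|\xi_1|=\rho_1}\cdots\oint_{|\xi_J|=\rho_J}\frac{\mathcal{G}^{J,l}(\xi)}{\xi_1^{\nu_1+1}\cdots \xi_J^{\nu_J+1}}\,d\xi_1\cdots d\xi_J,
\end{equation*}
whence $|t_{\nu,J,l}|\le \rho^{-\nu}\sup_{\xi\in\partial\mathcal{D}^J_\rho}|\mathcal{G}^{J,l}(\xi)|$.

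The main task is therefore to produce a bound on $\sup_{\xi\in\mathcal{D}^J_\rho}|\mathcal{G}^{J,l}(\xi)|$ that is independent of $\nu$, $J$ and $l$. For every $\xi\in\mathcal{D}^J_\rho\subset\mathcal{A}_J$, the parametrized conductivity $\sigma^J(\cdot,\xi)$ belongs to $\mathcal{A}(D)$ with the \emph{uniform} bounds $\sigma^-/2\le\operatorname{Re}(\sigma^J(x,\xi))\le|\sigma^J(x,\xi)|\le 2\sigma^+$, independently of $\xi\in\mathcal{D}^J_\rho$ and of $J$. Consequently, the sesquilinear form $B(\cdot,\cdot;\sigma^J(\xi))$ associated to the complex smoothened CEM is coercive and bounded with constants that do not depend on $\xi$, $J$ or $l$. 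Applying the FE well-posedness bound \eqref{Vl_bounded} to each of the $K-1$ current patterns $I^{(k)}$ gives
\begin{equation*}
|V^{J,l,(k)}(\xi)|\le C\frac{|I^{(k)}|}{\min\{\sigma^-/2,\zeta^-\}}
\end{equation*}
uniformly in $\xi\in\mathcal{D}^J_\rho$, $J$ and $l$. Concatenating yields a uniform bound $\sup_{\xi\in\mathcal{D}^J_\rho}|\mathcal{G}^{J,l}(\xi)|\le C$, and the desired estimate $|t_{\nu,J,l}|\le C\rho^{-\nu}$ follows.

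The only step that needs care is the verification that the constants in the Lax–Milgram estimate for the complex problem do not degrade with $J$; this is ensured precisely by \eqref{admissible_rho}, which forces $\operatorname{Re}(\sigma^J(\cdot,\xi))$ to stay above $\sigma^-/2$ on $\mathcal{D}^J_\rho$. Otherwise the argument is a direct transcription of the classical Cauchy-estimate proof used for parametric elliptic equations in \cite{Chkifa2013, Cohen2011}, adapted to the smoothened CEM variational setting of Appendix \ref{appendix:complex_SCEM}.
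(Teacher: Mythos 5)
Your proof is correct and follows essentially the same route as the paper's: the multivariate Cauchy integral formula on the polydisc $\mathcal{D}^J_\rho$ combined with the uniform bound \eqref{Vl_bounded} on $\mathcal{G}^{J,l}$ over $\mathcal{A}_J$. You simply spell out in more detail why the Lax--Milgram constants are uniform in $\xi$, $J$, $l$, which the paper leaves implicit.
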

	\begin{proof}
		Since the map $z\mapsto \mathcal{G}^{J,l}(z)$ is holomorphic on $\mathcal{D}_\rho^J$, the Cauchy's integral theorem (see e.g., Theorem 2.2.6 in \cite{Hormander1990}) gives
		\begin{equation*}
			t_{\nu,J,l}
			=\frac{1}{(2\pi i)^J}\int_{|\xi_1|=\rho_1}\dots\int_{|\xi_J|=\rho_J}\frac{\mathcal{G}^{J,l}(\xi_1,\dots,\xi_J)}{\xi_1^{\nu_1+1}\dots \xi_J^{\nu_J+1}}. 
		\end{equation*}
		From \eqref{Vl_bounded}, we deduce that $\mathcal{G}^{J,l}(\xi)$ is uniformly bounded for all $J,l \in \mathbb{N}, \xi\in \mathcal{D}^J_\rho$. Hence the desired estimate follows.

	\end{proof}

	\begin{proof}[\textbf{Proof of Proposition \ref{best_N_existence}}]
		Using Lemma 4.2 in \cite{Chkifa2013} and Lemma \ref{unconditional_Taylor}, the interpolation error in approximating $\mathcal{G}^{J,l}$ can be linked to the decay of its Taylor coefficients, namely 
		\begin{equation}\label{relate_interp_taylor}
			\sup_{y\in U_J}|\mathcal{G}^{J,l}(y)-I_{\Lambda}\mathcal{G}^{J,l}(y)|\le 2\sum_{\nu \notin \Lambda}p_{\nu,J}(\theta)|t_{\nu,J,l}|,
		\end{equation}
		where $p_{\nu,J}(\theta):=\prod_{j=1}^{J}(1+\nu_j)^{\theta+1}$. With \eqref{relate_interp_taylor} and Lemma \ref{t_nu_bound}, we can proceed as in the proof of Theorem 4.3 in \cite{Chkifa2013} to complete the proof of the theorem. 
	\end{proof}
	
	\begin{remark}
		For log-affine parametrization \eqref{log_affine_para}, we can perform error analysis using results in \cite{Chkifa2015}. The Taylor series of $\mathcal{G}^{J,l}$ should be replaced by a Legendre series, and a similar interpolation error of order $(N+1)^{-s}$ could be obtained. 
	\end{remark}

	\subsection{Monte Carlo error}

	%{\todo If you can find the ergodic geometry somewhere (is it in the paper by Vollmer) then just cite it. We should avoid including so many different things in the paper}
	To analyze the MCMC error, we use the concept of `second largest eigenvalue' for Markov operators. We follow  the literature, e.g.,  \cite{Rudolf2012}. 
%%%%%%%%%%%%%%%%%%%%%%%%%%%%%%%%%%%%%%%%%%%%%%%%%%%%%%%%%%%%
	\begin{comment}
		Let $(X_n)_{n=1}^{\infty}$ be a Markov chain with transition kernel $K$ on some state space $S$. Let $\pi$ be a probability measure on $S$. The corresponding Markov operator $\mathcal{K}$ acting on $L^2_\pi(S,\mathbb{R})$ is defined by 
		\begin{equation*}
			\mathcal{K}f(u):=\int_S f(v)K(u,dv). 
		\end{equation*}
		If $K$ is reversible with respect to $\pi$, it is well-known that the linear operator $\mathcal{K}$ is self-adjoint and the spectrum of $\mathcal{K}$ lies in the interval $[-1,1]$. Moreover, $1$ is the largest eigenvalue of $\mathcal{K}$ since $\mathcal{K}c=c$ for any constant functional $c$. The orthogonal complement of the space of constant functionals is $L_0^2(\pi):=\{f\in L^2_\pi(S,\mathbb{R}):E^{\pi}[f]=0\}$. The `second largest eigenvalue' $\lambda_{(2)}^{K,\pi}$ is defined as the least upper bound of the spectrum of the restriction of $\mathcal{K}$ to $L_0^2(\pi)$, that is, 
		\begin{equation}\label{spectral_def}
			\begin{split}
				\lambda_{(2)}^{K,\pi}:=& \sup\{\alpha: \alpha\in \operatorname{spec}(\mathcal{K}|L_0^2(\pi))\}\\
				=&\sup_{f\in L_0^2(\pi), ||f||_{L^2(\pi)}=1}\langle Kf,f\rangle_{L^2(\pi)}.
			\end{split}
		\end{equation} 
		We remark that the `second largest eigenvalue' could be $1$. 
	\end{comment}
%%%%%%%%%%%%%%%%%%%%%%%%%%%%%%%%%%%%%%%%%%%%%%%%%%%%%%%%%%%%%%%%%%%%%	

 For the proposal $Q^J$ in \eqref{proposal_reversible}, 	
	we define the 'second largest eigenvalue' with respect to $Q^J$ and $\mu_0^J$ as follows. The corresponding Markov operator $\mathcal{Q}^J$ acting on  $L^2(U_J,\mathbb{R};\mu_0^J)$ is defined by 
	\begin{equation*}
		\mathcal{Q}^J f(\by):=\int_{U_J} f( \bs)Q^J(y,d\bs). 
	\end{equation*}
	Since $Q^J$ is reversible with respect to $\mu_0^J$, the linear operator $\mathcal{Q}^J$ is self-adjoint and its spectrum lies in the interval $[-1,1]$. Moreover, $1$ is the largest eigenvalue of $\mathcal{Q}^J$ since $\mathcal{Q}^J c=c$ for any constant functional $c$. Let $L_0^2(\mu_0^J)$ be the space of measurable functions $f:U_J\to \mathbb{R}$ such that 
	\begin{equation*}
		\int_{U_J}f(\by)d\mu_0^J(\by)=0; \quad \int_{U_J}[f(\by)]^2 d\mu_0^J(\by)=1.
	\end{equation*}
	The `second largest eigenvalue' $\lambda_{(2)}^{Q^J,\mu_0^J}$ is defined as
	\begin{equation}\label{spectral_def}
		\lambda_{(2)}^{Q^J,\mu_0^J}:= \sup_{f\in L_0^2(\mu_0^J)} \int_{U_J} [\mathcal{Q}^J f(\by)]f(\by)d\mu_0^J(\by).
	\end{equation}
	Equivalently, $\lambda_{(2)}^{Q^J,\mu_0^J}$ can be interpreted as the least upper bound of the spectrum of the restriction of $\mathcal{Q}^J$ to the orthogonal complement of the space of constant functionals \cite{Rudolf2012}. 
	
	Let $K^{J,l,N}$ be the transition kernel with proposal $Q^J$ and acceptance probability $\alpha^{J,l,N}$ in \eqref{eq:alphaJlN}. 
	Since $K^{J,l,N}$ is reversible with respect to the approximated posterior distribution $\mu^{J,l,N,\delta}$, we can define the `second largest eigenvalue' $\lambda_{(2)}^{K^{J,l,N},\mu^{J,l,N,\delta}}$ in a similar manner as \eqref{spectral_def} when $Q^J$ and $\mu_0^J$ are replaced by $K^{J,l,N}$ and $\mu^{J,l,N,\delta}$, respectively. 
	
	We assume that the second largest eigenvalue of $Q^J$ is uniformly bounded by a constant strictly less than $1$.
	\begin{assumption}\label{spectral_gap_assumption}
		$\sup_J \lambda_{(2)}^{Q^J,\mu_0^J}<1$.
	\end{assumption}
	This assumption can be verified for the Independence Sampler and Reflection Random Walk Metropolis  sampling methods. For Independence Sampler, %if we use $Q_{IS}^J$ as the transition kernel, the resulting Markov chain is just a sequence of i.i.d. samples from $\mu_0^J$. This corresponds to the Markov operator $\mathcal{Q}_{IS}^J(f):=E^{\mu_0^J}[f]$. Hence, 
it is clear from \eqref{spectral_def} that $\lambda_{(2)}^{Q_{IS}^J,\mu_0^J}=0$ for all $J$. For the Reflection Random Warl Metropolis, the above assumption is verified in \cite{Vollmer2015} in the case $\mu_0^J$ is the uniform prior on $U_J$.

	\begin{lemma}\label{spectral_gap_bound}
		Under Assumption \ref{spectral_gap_assumption}, there exists a positive constant $\lambda_{(2)}<1$ such that 
		\begin{equation*}
			\sup_{J,l,N} \lambda_{(2)}^{K^{J,l,N},\mu^{J,l,N,\delta}}\le \lambda_{(2)}. 
		\end{equation*}
	\end{lemma}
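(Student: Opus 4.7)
The plan is to compare the Dirichlet form of $K^{J,l,N}$ with that of $Q^J$ and then invoke Assumption~\ref{spectral_gap_assumption}. The uniform boundedness of all ingredients, inherited from the estimates already proved for the forward map, is the engine.

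First I would establish the uniform boundedness facts that drive everything. Since $|\mathcal{G}^{J,l,N}(\by)|_\Sigma$ is uniformly bounded for $\by\in U_J$ and $J,l,N\in\mathbb{N}$ (shown in the proof of Proposition~\ref{Hell_uniform_bound}), the misfit $\Phi^{J,l,N}$ defined in \eqref{eq:PhiJlN} satisfies $0\le \Phi^{J,l,N}(\by;\delta)\le \bar{C}$ for a constant $\bar{C}$ independent of $\by,J,l,N$. Two immediate consequences follow: the acceptance probability obeys
\[
\alpha^{J,l,N}(\by,\bs)=\min\{1,\exp(\Phi^{J,l,N}(\by)-\Phi^{J,l,N}(\bs))\}\ge e^{-\bar C}=:\alpha_{\min}>0,
\]
and the density $\rho^{J,l,N}:=d\mu^{J,l,N,\delta}/d\mu_0^J\propto \exp(-\Phi^{J,l,N})$ satisfies $0<c_1\le \rho^{J,l,N}(\by)\le c_2<\infty$ uniformly.

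Next I would write the Dirichlet form of the Metropolis--Hastings kernel. Because the atomic part of $K^{J,l,N}$ contributes nothing to $(f(\by)-f(\bs))^2$,
\[
\mathcal{E}_{K}(f,f)=\tfrac12\!\int\!\!\int (f(\by)-f(\bs))^2\,\alpha^{J,l,N}(\by,\bs)\,Q^J(\by,d\bs)\,\mu^{J,l,N,\delta}(d\by).
\]
Using $\alpha^{J,l,N}\ge \alpha_{\min}$ and $\rho^{J,l,N}\ge c_1$, this is bounded below by $\alpha_{\min}c_1\,\mathcal{E}_{Q^J}(f,f)$, where $\mathcal{E}_{Q^J}$ is the Dirichlet form of $Q^J$ with respect to $\mu_0^J$. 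Assumption~\ref{spectral_gap_assumption} provides $\gamma:=1-\sup_J\lambda_{(2)}^{Q^J,\mu_0^J}>0$, so by the variational characterisation \eqref{spectral_def},
\[
\mathcal{E}_{Q^J}(f,f)\ge \gamma\,\|f-E^{\mu_0^J}[f]\|_{L^2(\mu_0^J)}^2,
\]
since $\mathcal{E}_{Q^J}$ is invariant under addition of constants to $f$.

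To finish I would translate the mean-zero constraint from $\mu_0^J$ to $\mu^{J,l,N,\delta}$. For any $c\in\mathbb{R}$ and $g\in L^2$, the bounds $c_1\le \rho^{J,l,N}\le c_2$ give $\|g\|_{L^2(\mu_0^J)}^2\ge c_2^{-1}\|g\|_{L^2(\mu^{J,l,N,\delta})}^2$. Applying this with $g=f-E^{\mu_0^J}[f]$ and then using that $E^{\mu^{J,l,N,\delta}}[f]$ minimises $c\mapsto \|f-c\|_{L^2(\mu^{J,l,N,\delta})}^2$, I obtain, for $f\in L^2_0(\mu^{J,l,N,\delta})$,
\[
\|f-E^{\mu_0^J}[f]\|_{L^2(\mu_0^J)}^2\;\ge\; c_2^{-1}\,\|f\|_{L^2(\mu^{J,l,N,\delta})}^2.
\]
Chaining the three inequalities yields $\mathcal{E}_K(f,f)\ge \alpha_{\min}c_1\gamma c_2^{-1}\|f\|_{L^2(\mu^{J,l,N,\delta})}^2$, and since $1-\lambda_{(2)}^{K^{J,l,N},\mu^{J,l,N,\delta}}$ equals the infimum of $\mathcal{E}_K(f,f)$ over $f\in L^2_0(\mu^{J,l,N,\delta})$ with unit norm, the desired uniform bound $\lambda_{(2)}:=1-\alpha_{\min}c_1\gamma c_2^{-1}<1$ follows.

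I expect the only delicate point to be the projection step in the last paragraph: one has to be careful that $E^{\mu_0^J}[f]$ is \emph{not} necessarily zero even if $E^{\mu^{J,l,N,\delta}}[f]=0$, and conversely. The trick is to exploit that the Dirichlet form kills additive constants, so we may freely subtract $E^{\mu_0^J}[f]$ to unlock Assumption~\ref{spectral_gap_assumption}, while the variance minimisation property of the posterior mean supplies the reverse comparison needed to land back in $L^2(\mu^{J,l,N,\delta})$. Everything else is a mechanical consequence of the two-sided bounds $\alpha_{\min},c_1,c_2$ being independent of $J,l,N$.
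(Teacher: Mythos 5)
Your proof is correct, but it takes a more self-contained route than the paper. The paper's proof is essentially two lines: it notes the uniform two-sided bound $0<c\le\exp(-\Phi^{J,l,N}(\by;\delta))\le 1$ and then directly invokes Theorem 3.3 of Vollmer et al.\ \cite{Vollmer2015}, which asserts $1-\lambda_{(2)}^{K^{J,l,N},\mu^{J,l,N,\delta}}\ge c^4\bigl(1-\lambda_{(2)}^{Q^J,\mu_0^J}\bigr)$, after which Assumption \ref{spectral_gap_assumption} finishes the argument. What you have done is reprove that cited comparison theorem from scratch via the standard Dirichlet-form argument: the uniform lower bound on the acceptance probability and the two-sided bound on the Radon--Nikodym density $d\mu^{J,l,N,\delta}/d\mu_0^J$ let you sandwich the Dirichlet form of $K^{J,l,N}$ below by that of $Q^J$, and the variance-minimisation property of the posterior mean correctly resolves the mismatch between the mean-zero constraints under $\mu_0^J$ and under $\mu^{J,l,N,\delta}$ --- the one genuinely delicate step, which you identify and handle properly. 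Your chain of inequalities yields the constant $e^{-3\bar C}$ in place of the cited $c^4=e^{-4\bar C}$; the difference is immaterial since only uniformity in $J,l,N$ matters. The paper's approach buys brevity by outsourcing the spectral comparison to the literature; yours buys transparency, making explicit that the only inputs are the uniform bounds on $\Phi^{J,l,N}$ and the reversibility of $Q^J$ with respect to $\mu_0^J$, and it would generalise immediately to any proposal satisfying \eqref{proposal_reversible} without needing the precise form of the cited theorem.
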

	\begin{proof}
		Since $|\mathcal{G}^{J,l,N}(\by)|$ is uniformly bounded, there exists a constant $c>0$ independent of $J,l,N$ such that for all $\by\in U_J$, 
		\begin{equation*}
			0<c\le \exp(-\Phi^{J,l,N}(\by;\delta))\le 1. 
		\end{equation*}
		Theorem 3.3 in \cite{Vollmer2015} implies that for any $J,l,N$, 
		\begin{equation*}
			1 - \lambda_{(2)}^{K^{J,l,N},\mu^{J,l,N,\delta}}\ge  c^4(1-\lambda_{(2)}^{Q^J,\mu_0^J}). 
		\end{equation*}
		By Assumption \ref{spectral_gap_assumption}, the RHS is uniformly bounded below by a positive constant. 
	\end{proof}
	
	For each probability distribution $\mu$ on $U_J$, we denote by $\mathcal{E}^{\mu,J,l,N}$ the expectation taken with respect to the joint distribution of the Markov chain $(u^{(i)})_{i=1}^{\infty}$ with transition kernel $K^{J,l,N}$ and initial distribution $\mu$. 
	From Corollary 3.27 in \cite{Rudolf2012} and Lemma \ref{spectral_gap_bound},  we deduce that if Assumption \ref{spectral_gap_assumption} holds  then for any function $g: U_J\to \mathbb{R}$ such that $\sup_{y\in U_J}|g(\by)|\le 1$,  
	\begin{equation}\label{MSE_Rudolf}
		\left(\mathcal{E}^{\mu^{J,l,N,\delta},J,l,N}\left[\left|E^{\mu^{J,l,N,\delta}}[g]-E_M^{\mu^{J,l,N,\delta}}[g]\right|^2\right]\right)^{1/2}\le \sqrt{\frac{2}{1-\lambda_{(2)}}} M^{-1/2}.
	\end{equation}
	
	\begin{comment}
			\begin{remark}
			In \cite{Rudolf2012}, there are results concerning the non-asymptotic error of burn-in. But to apply these results, we also need a nontrivial lower bound for $\lambda^{Q^J,\mu_0^J}_{min}:=\inf\{\alpha: \alpha \in \operatorname{spec}(Q^J|L_0^2(\mu_0^J))\}$. For IS, it is still 0. But for local Metropolis-Hastings methods like RRWM, the lower bound is not yet available in the literature. 
		\end{remark}
	\end{comment}

	\begin{theorem}\label{error_total}
		Under Assumptions \ref{lp-summability_assumption}, \ref{assumption:sigma,zeta}, \ref{lebesgue_const},   \ref{spectral_gap_assumption}, there exists a nested sequence of lower sets $(\Lambda_N)_{N\ge 1}$ such that $\#(\Lambda_N)=N$ and 
		\begin{equation}\label{error_eq}
			\left(\mathcal{E}^{\mu_0^J,J,l,N}\left[\left\Vert E^{\mu^{J,\delta}}[\sigma^J]- E_M^{\mu^{J,l,N,\delta}}[\sigma^J]\right\Vert_{L^\infty(D)}^2\right]\right)^{1/2}\le C(2^{-2l}+N^{-s}+M^{-1/2}), 
		\end{equation}
		where $C$ is a constant independent of $J,l,N,M$. 
	\end{theorem}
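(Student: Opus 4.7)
The plan is to insert the intermediate quantity $E^{\mu^{J,l,N,\delta}}[\sigma^J]$ and split the error via the triangle inequality in $L^\infty(D)$ into a deterministic bias
\[
T_1:=\bigl\Vert E^{\mu^{J,\delta}}[\sigma^J]-E^{\mu^{J,l,N,\delta}}[\sigma^J]\bigr\Vert_{L^\infty(D)},
\]
measuring the discrepancy between the true and surrogate posterior expectations, and a stochastic Monte Carlo term
\[
T_2:=\bigl\Vert E^{\mu^{J,l,N,\delta}}[\sigma^J]-E^{\mu^{J,l,N,\delta}}_M[\sigma^J]\bigr\Vert_{L^\infty(D)}.
\]
The stated estimate will then follow by combining the deterministic bound $T_1\le C(2^{-2l}+N^{-s})$ with the mean-square bound $(\mathcal{E}^{\mu_0^J,J,l,N}[T_2^2])^{1/2}\le CM^{-1/2}$.

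For $T_1$ I would apply, pointwise in $x\in D$, the standard transfer estimate $|E^{\mu}[g]-E^{\mu'}[g]|\le 2(E^{\mu_0}[g^2])^{1/2}d_{Hell}(\mu,\mu')$, obtained as in the proof of Proposition~\ref{well_posed}. For each $x$ the section $\by\mapsto \sigma^J(x,\by)$ is bounded by $\sigma^+$ uniformly in $x$ and $\by$, so the right-hand side is dominated by $2\sigma^+\,d_{Hell}(\mu^{J,\delta},\mu^{J,l,N,\delta})$, which is independent of $x$. Proposition~\ref{Hell_uniform_bound} then controls this Hellinger distance by $C(2^{-2l}+N^{-s})$, and taking the supremum in $x$ yields the bound on $T_1$.

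For $T_2$ I would exploit the affine structure \eqref{affine_para}: since $\sigma^J(x,\by)=\bar\sigma(x)+\sum_{j=1}^J y_j\psi_j(x)$ is linear in the parameters, both the posterior and the empirical averages are linear in the $y_j$, so
\[
T_2\le \sum_{j=1}^J\|\psi_j\|_{L^\infty(D)}\,\bigl|E^{\mu^{J,l,N,\delta}}[y_j]-\tfrac1M\textstyle\sum_{i=1}^M y_j^{(i)}\bigr|.
\]
Minkowski's inequality in the probability space of the Markov chain then reduces the problem to a scalar MCMC error bound for each coordinate functional $g_j(\by):=y_j$, which satisfies $|g_j|\le 1$. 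With the stationary initial distribution $\mu^{J,l,N,\delta}$, \eqref{MSE_Rudolf} gives $(\mathcal{E}^{\mu^{J,l,N,\delta},J,l,N}[|E^{\mu^{J,l,N,\delta}}[g_j]-\bar g_{j,M}|^2])^{1/2}\le CM^{-1/2}$ uniformly in $j,J,l,N$. To convert to the initialisation $\mu_0^J$ used in the statement, I would observe that $\Phi^{J,l,N}$ is uniformly bounded above and below (the latter via the uniform boundedness of $|\mathcal{G}^{J,l,N}|$ from \eqref{G-G_Nl} and \eqref{GJ_bounded}); consequently the Radon--Nikodym derivative $d\mu_0^J/d\mu^{J,l,N,\delta}$ is bounded by a constant independent of $J,l,N$, and changing measure only at step $0$ of the chain yields $\mathcal{E}^{\mu_0^J,J,l,N}[F]\le C\,\mathcal{E}^{\mu^{J,l,N,\delta},J,l,N}[F]$ for every nonnegative $F$. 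Summing over $j$ and invoking Assumption~\ref{lp-summability_assumption} to bound $\sum_{j\ge1}\|\psi_j\|_{L^\infty(D)}\le C\sum_j j^{-(1+s)}<\infty$ uniformly in $J$ then completes the estimate for $T_2$.

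I expect the main obstacle to be the $L^\infty(D)$-valued character of $\sigma^J$, because \eqref{MSE_Rudolf} is a scalar statement and the supremum over $x\in D$ does not commute with the Monte Carlo mean-square expectation. The reduction to the scalar coordinates $y_j$ via the affine parametrisation is the step that overcomes this, and it works precisely because Assumption~\ref{lp-summability_assumption} with $s>0$ forces $\sum_j\|\psi_j\|_{L^\infty(D)}$ to converge independently of $J$. The secondary difficulty, the mismatch between the initial distribution $\mu_0^J$ and the stationary distribution $\mu^{J,l,N,\delta}$, is comparatively routine and is resolved by the uniform boundedness of the Radon--Nikodym derivative between the two measures.
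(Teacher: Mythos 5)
Your proposal is correct and follows essentially the same route as the paper's proof: a triangle-inequality split into a bias term controlled by the Hellinger estimate of Proposition \ref{Hell_uniform_bound} (the paper invokes Lemma 21 of the Dashti--Stuart reference where you argue pointwise in $x$, but these are the same estimate), and a Monte Carlo term reduced via the affine parametrization to scalar coordinate functionals $y_j$, bounded by \eqref{MSE_Rudolf} and summed using Assumption \ref{lp-summability_assumption}, with the change from stationary to prior initialization handled exactly as in the paper by the uniform boundedness of $d\mu_0^J/d\mu^{J,l,N,\delta}$.
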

	\begin{proof}
		As $\sup_{\by\in U_J}||\sigma^J(\cdot,\by)||_{L^\infty(D)}\le \sigma^+$, applying Lemma 21 in \cite{Dashti2017} yields 
		\begin{equation*}
			||E^{\mu^{J,\delta}}[\sigma^J]-E^{\mu^{J,l,N\delta}}[\sigma^J]||_{L^\infty(D)}\le 2\sqrt{2}\sigma^{+}	d_{Hell}(\mu^{J,l,N,\delta},\mu^{J,\delta}).
		\end{equation*} 
		Combining with the Hellinger distance estimate in Proposition \ref{Hell_uniform_bound}, we have 
		\begin{equation}\label{error_Nl}
			||E^{\mu^{J,\delta}}[\sigma^J]-E^{\mu^{J,l,N\delta}}[\sigma^J]||_{L^\infty(D)}\le C_1(2^{-2l}+N^{-s}),  
		\end{equation}
		for some constant $C_1$ independent of $J,l,N$. 
		
		Due to the linearity of expectations,  
		\begin{equation*}
			E^{J,l,N,\delta}[\sigma^J]-E^{\mu^{J,l,N,\delta}}_M[\sigma^J] = \sum_{j=1}^J \psi_j(x) \left[E^{J,l,N,\delta}[ y_j]-E^{\mu^{J,l,N,\delta}}_M[ y_j]\right], 
		\end{equation*}
%		where $g_j:U_J\mapsto [-1,1]$ is defined by $g_j(y):=y_j$, $j=1,\dots, J$. 
Applying \eqref{MSE_Rudolf} for  $g=y_j$ and using the triangle inequality, we deduce 
		\begin{equation*}
			\left(\mathcal{E}^{\mu^{J,l,N,\delta},J,l,N}\left[\left|E^{\mu^{J,l,N,\delta}}[\sigma^J]-E_M^{\mu^{J,l,N,\delta}}[\sigma^J]\right|^2\right]\right)^{1/2}\le \sqrt{\frac{2}{1-\lambda_{(2)}}} M^{-1/2} \sum_{j=1}^{J}||\psi_j||_{L^\infty(D)}. 
		\end{equation*}  
		From Assumption \ref{lp-summability_assumption} about the decay rate of $(||\psi_j||_{L^\infty(D)})_{j\ge 1}$, we have 
		\begin{equation*}
			\sum_{j=1}^{J}||\psi_j||_{L^\infty(D)}\le \sum_{j=1}^{\infty}||\psi_j||_{L^\infty(D)}<\infty. 
		\end{equation*}
		On the other hand, note that 
		\begin{equation*}
			\frac{d\mu_0^J}{d\mu^{J,l,N,\delta}} = Z^{J,l,N}(\delta)\exp(\Phi^{J,l,N}(\by;\delta))\le \exp(\Phi^{J,l,N}(\by;\delta)).  
		\end{equation*} 
		Since $|\mathcal{G}^{J,l,N}(\by)|$ is uniformly bounded, the Radon-Nikodym derivative of $\mu_0^J$ with respect to $\mu^{J,l,N,\delta}$ is uniformly bounded. Hence, there exists a constant $C_2$ independent of $J,l,N,M$ such that 
		\begin{equation}\label{error_M}
			\left(\mathcal{E}^{\mu_0^{J},J,l,N}\left[\left|E^{\mu^{J,l,N,\delta}}[\sigma^J]-E_M^{\mu^{J,l,N,\delta}}[\sigma^J]\right|^2\right]\right)^{1/2}\le C_2 M^{-1/2}. 
		\end{equation} 
		The proof is completed by combining \eqref{error_Nl} and \eqref{error_M}.
	\end{proof}
	
	\begin{proof}[\textbf{Proof of Theorem \ref{error_total_J}}]
			Proposition \ref{mu-muJ} and Lemma 21 in \cite{Dashti2017} imply that 
		\begin{equation*}
			||E^{\mu^\delta}[\sigma]-E^{\mu^{J,\delta}}[\sigma]||_{L^\infty(D)} \le 2\sqrt{2} \sigma^+ d_{Hell}(\mu^\delta,\mu^{J,\delta})\le CJ^{-s}.  
		\end{equation*}
		On the other hand, from \eqref{sigma-sigmaJ}, 
		\begin{equation*}
			||E^{\mu^{J,\delta}}[\sigma]-E^{\mu^{J,\delta}}[\sigma^J]||_{L^\infty(D)}\le CJ^{-s}. 
		\end{equation*}
		Hence, by the triangle inequality,  
		\begin{equation}\label{error_trunc}
			||E^{\mu^\delta}[\sigma]-E^{\mu^{J,\delta}}[\sigma^J]||_{L^\infty(D)}\le CJ^{-s}. 
		\end{equation}
		Note that $E^{\mu^{J,\delta}}[\sigma^J]$ is identical whether $\mu^{J,\delta}$ is regarded as a measure on $[-1,1]^\mathbb{N}$ or on  $[-1,1]^J$. 
		Combining \eqref{error_trunc} and Theorem \ref{error_total}, we get the desired result. 
	\end{proof}

\end{document}